\newtheorem{thm}{Theorem}
\newtheorem*{thm*}{Theorem}
\newtheorem{cor}[thm]{Corollary}
\newtheorem{lem}[thm]{Lemma}
\newtheorem{prop}[thm]{Proposition}
\newtheorem{con}[thm]{Conjecture}
\newtheorem{ex}[thm]{Example}
\newtheorem{rem}[thm]{Remark}
\newtheorem{conj}[thm]{Conjecture}
\newtheorem{probl}[thm]{Problem}
\newcommand{\CF}{\mathcal{F}}
\newcommand{\CH}{\mathcal{H}}
\newcommand{\CK}{\mathcal{K}}
\newcommand{\CR}{\mathcal{R}}
\newcommand{\CV}{\mathcal{V}}
\newcommand{\CW}{\mathcal{W}}
\newcommand{\DS}{\displaystyle}
\newcommand{\R}{\mathbb{R}}
\newcommand{\B}{\mathbb{B}}
\renewcommand{\S}{\mathbb{S}}
\newcommand{\NP}{\mathbb{NP}}
\renewcommand{\P}{\mathbb{P}}
\newcommand{\Q}{\mathbb{Q}}
\newcommand{\Z}{\mathbb{Z}}
\newcommand{\PiK}{\Pi\mathrm{K}}
\newcommand{\PicK}{\Pi^{\circ}\mathrm{K}}
\newcommand{\PiP}{\Pi\mathrm{P}}
\newcommand{\PiQ}{\Pi\mathrm{Q}}
\newcommand{\DP}{\mathrm{DP}}
\newcommand{\LW}{\mathrm{LW}}
\newcommand{\inte}{\operatorname{int}}
\newcommand{\lin}{\operatorname{lin}}
\newcommand{\N}{\mathbb{N}}
\newcommand{\prob}{\operatorname{prob}}
\newcommand{\vol}{\operatorname{vol}}
\newcommand{\conv}{\operatorname{conv}}
\newcommand{\size}{\operatorname{size}}
\newcommand{\diam}{\operatorname{diam}}
\newcommand{\area}{\operatorname{S}}
\newcommand{\iso}{\operatorname{iso}}
\newcommand{\1}{{\hbox{\rm 1} \kern -.41em \hbox{\rm 1} }}
\begin{document}

\title{On the reverse Loomis--Whitney inequality}

\author[S. Campi]{Stefano Campi}
\address[]{Stefano Campi: Dipartimento di Ingegneria dell'Informazione, Universit\'a degli Studi di Sienna, I-53100 Siena, Italy}
\email{campi@dii.unisi.it}

\author[P. Gritzmann]{Peter Gritzmann}
\address[]{Peter Gritzmann: Zentrum Mathematik, Technische Universit\"at M\"unchen, D-85747 Garching bei M\"unchen, Germany}
\email{gritzmann@tum.de}

\author[P. Gronchi]{Paolo Gronchi}
\address[]{Paolo Gronchi: Dipartimento di Matematica e Informatica ``Ulisse Dini'', Universit\'a degli Studi di Firenze,
I-50122 Firenze, Italy}
\email{paolo.gronchi@unifi.it}

\begin{abstract}
The present paper deals with the problem of computing (or at least estimating) the $\LW$-number $\lambda(n)$, i.e.,  the supremum of all $\gamma$ such that for each convex body $K$ in $\R^n$ there exists an
orthonormal basis $\{u_1,\ldots,u_n\}$ such that
$$
\vol_n(K)^{n-1} \geq \gamma \prod_{i=1}^n \vol_{n-1} (K|u_i^{\perp}) ,
$$
where $K|u_i^{\perp}$ denotes the orthogonal projection of $K$ onto the hyperplane $u_i^{\perp}$ perpendicular to
$u_i$. Any such inequality can be regarded as a reverse to the well-known classical Loomis--Whitney inequality.
We present various results on such {\em reverse Loomis--Whitney inequalities}.
In particular, we prove some structural results, give bounds on $\lambda(n)$ and deal with the
problem of actually computing the $\LW$-constant of a rational polytope.
\end{abstract}
\maketitle

\section{Introduction}\label{sec-LW}

\subsection{The Loomis--Whitney inequality}
The  well-known {\em Loomis--Whitney inequality} \cite{lw-49} states that, for any Lebesgue measurable set $A$ in $\R^n$,
\begin{equation}\label{LoomisWhitneyineq}
\mu_n(A)^{n-1} \le \prod_{i=1}^n \mu_{n-1} (A|e_i^{\perp}),
\end{equation}
where $e_i$ is the $i$th standard coordinate vector of $\R^n$, $i\in [n]=\{1,\ldots,n\}$, $\mu_k$ denotes
the $k$-dimensional Lebesgue measure, and $A|u^{\perp}$ stands for the orthogonal projection of $A$
on the hyperplane $u^{\perp}$ through the origin perpendicular to $u\in \R^n\setminus \{0\}$.

The original proof by Loomis and Whitney is based on a combinatorial argument.
Note that, as coordinate boxes are examples of sets for which equality holds, (\ref{LoomisWhitneyineq}) is tight.
Recently, \cite{BBFL} and \cite{EFKY} gave general stability results which, in particular, yield characterizations
of equality in (\ref{LoomisWhitneyineq}). For instance, in the class of {\em bodies}, i.e., connected compact sets which are
the closure of their interior, (\ref{LoomisWhitneyineq}) holds with equality for, and only for, coordinate boxes;
see \cite[Corollary 2]{EFKY}).

Over the years, the Loomis--Whitney inequality has been extended, generalized and used
in other mathematical areas in various ways.
For instance, \cite{BollobasThomason} and \cite{BuragoZalgaller}  consider versions involving projections
on coordinate subspaces of any dimension, \cite{BalisterBollobas} establishes a connection with
Shearer's entropy inequality,
\cite{ball-91a} gives a generalization for projections along other suitable directions,
\cite{BCW}, \cite{BobkovNazarov},   \cite{BrascampLieb} prove extensions to analytical
inequalities, \cite{BCT} shows its relevance for the multilinear Kakeya conjecture,
and \cite{Gromov} studies its implications in group theory. The very recent papers \cite{BDGK-16},
\cite{BGL-16},  \cite{GKV-16} give various generalizations particularly
in the context of the uniform cover inequality, for the average volume of sections or
between quermassintegrals of convex bodies and extremal or average projections on
lower dimensional spaces, respectively.

Fields of applications of Loomis--Whitney type inequalities range from microscopy
and stereology \cite{Vedel}, to geochemistry \cite{Shepherd}, astrophysics
\cite{OstroConnelly} and information theory \cite{Donoho}, \cite{HWB}; see \cite{CampiGarGro} and \cite{CampiGro} for other examples.

\subsection{Reversing the inequality} While the Loomis--Whitney inequality gives
an estimate of the Lebesgue measure of a  set $A$ from above, the present paper asks for
a corresponding estimate from below.  Since the case of $n=1$ is trivial, we assume in the following
that $n\geq 2$.

Of course, in general $\mu_n(A)$ can be made arbitrarily small by removing suitable
subsets of interior points from $A$ without changing its projections.
Therefore, we are quite naturally led to the class $\CK^n$ of {\em convex bodies} (and we will
write $ \vol_k$ now rather than $\mu_k$).
This restriction already allows the reverse inequality by Meyer \cite{Meyer88}
\begin{equation}\label{Meyerinequality}
        \vol_n(K)^{n-1} \geq \frac{(n-1)!}{n^{n-1}} \prod_{i=1}^n \vol_{n-1} (K\cap e_i^{\perp})
\end{equation}
in terms of sections with coordinate hyperplanes instead of projections.
In (\ref{Meyerinequality})  equality holds if and only if $K$ is a {\em coordinate crosspolytope},
i.e., the convex hull of segments
$[-\alpha_ie_i, \alpha_ie_i]$ with $\alpha_i\in (0,\infty)$, $i\in [n]$.
However, even for convex bodies, only trivial reverse Loomis--Whitney inequalities exist if
we insist on projections onto coordinate hyperplanes.
In fact, let  $T_{n}$ denote the  $n$-dimensional regular simplex in $\R^n$. (We speak of {\em the} regular simplex
since our problem is invariant under similarity anyway). Further let  $\hat{T}_{n-1}= \conv(\{e_1,\ldots,e_n\})\subset \R^{n}$ be the standard embedding of  $T_{n-1}$ in $\R^{n}$.
Then $\vol_{n}(\hat{T}_{n-1})=0$ while $\vol_{n-1} (\hat{T}_{n-1}| e_i^{\perp})=1/(n-1)!$.
Hence, the family $\{S_\tau: \tau \in (0,\frac{1}{n})\}$ of the convex hulls of $\hat{T}_{n-1}$ and the
point $(\tau, \tau, \dots, \tau)\in \R^n$ cannot satisfy the inequality
$$
 \vol_n(S_\tau)^{n-1} \geq \gamma \prod_{i=1}^n \vol_{n-1} (S_\tau|e_i^{\perp})
$$
for any positive constant $\gamma$.
Therefore, we allow rotations first, i.e., we project on hyperplanes $u_i^\perp$ perpendicular to the vectors
$u_1,\ldots,u_n$ of an orthonormal basis of $\R^n$.

\subsection{Reverse Loomis--Whitney inequalities and $\LW$-numbers}
Any orthonormal basis $F=\{u_1,\ldots,u_n\}$ of $\R^n$ will be called
a {\em frame}, and the set of all frames $F$ will be denoted by $\CF^n$. Further, for $K\in \CK^n$ and
$F=\{u_1,\ldots,u_n\}\in \CF^n$ let
$$
\Psi (K;F)= \prod_{i=1}^n \vol_{n-1} (K|u_i^{\perp}), \qquad
\Lambda (K;F)= \frac{\vol_n(K)^{n-1}}{\Psi (K;F)}.
$$
The quantities $\Psi (K;F)$ and $\Lambda (K;F)$ will be called the {\em projection average} and the
{\em $\LW$-ratio} of $K$ for $F$, respectively. Then
$$
\Psi (K)= \min_{F\in \CF^n}\Psi (K;F), \qquad \Lambda (K)= \max_{F\in \CF^n}\Lambda (K;F)
$$
are the {\em minimal projection average} and the  {\em $\LW$-constant of $K$}, respectively.
Note that, by compactness of the Euclidean unit sphere in $\R^n$,
the minimum on the left and, hence, the maximum on the right are indeed attained.
A frame producing the minimum projection average $\Psi(K)$ for the given convex body $K$
will be called a {\em best frame} for $K$. Obviously, the $\LW$-ratio is invariant under
similarity. 

We are interested in the best {\em universal} $\LW$-constant, i.e., the {\em $\LW$-number} of $\CK^n$,
$$
\lambda(n)= \inf_{K\in \CK^n} \Lambda (K).
$$
In other words, we want to compute (or at least estimate)
the supremum $\lambda(n)$ of all $\gamma$ such that for each convex body $K$ in $\R^n$
there exists an orthonormal basis $\{u_1,\ldots,u_n\}$ satisfying
$$
\vol_n(K)^{n-1} \geq \gamma \prod_{i=1}^n \vol_{n-1} (K|u_i^{\perp}) .
$$
Any inequality of this type will be called a {\em reverse Loomis--Whitney inequality}.

The paper is organized as follows. Section \ref{sec-prelim} is devoted to some preliminaries.
Section \ref{sec-main} will give an overview of our main results and open problems,
Section \ref{sec-face-structure} will link a best frame for a polytope $P$ to a rectangular box of smallest volume containing the projection body of $P$ and will then show how best frames relate to the facets of $P$.
In Section \ref{sec-evaluating} we will estimate $\lambda(n)$ as well as $\Lambda (K)$ for special bodies like simplices and regular crosspolytope. Section \ref{sec-algorithms} will deal with
algorithmic issues. The final Section \ref{sec-functionals} will provide results
for some related functionals.

\section{Preliminaries}\label{sec-prelim}

For easier reference we collect in this section some notation used throughout the paper.
Specific terms related to computational aspects will be given at the beginning of
Section \ref{sec-algorithms}.

We shall denote by $\|\cdot\|$ the Euclidean norm, by $\B^{n}$ the unit ball and by $\S^{n-1}$ the unit sphere in $\R^n$. We shall mean by {\sl convex body} in $\R^n$ a compact convex subset of $\R^n$ with nonempty interior, and we shall denote by $\CK^n$ the class of $n$-dimensional convex bodies.
Moreover, $\CK^n_o$ will denote the subclass of all origin symmetric convex bodies.

In addition to the volume $\vol_n(K)$ we shall also need the {\em surface area} $\area(K)$ of
a convex body $K$ and the corresponding {\em isoperimetric ratio}
$$
\iso(K)= \frac{\vol_n(K)^{n-1}}{\area(K)^n}.
$$

For $K\in \CK^n$, the {\sl support function} $h_K$ is defined by
$$
   h_K(u)=\max \{u^Tx:x\in K\},\ \ u\in \S^{n-1},
$$
where $u^T$ stands for the transpose of $u$.

The {\sl radial function} $\rho_K$ of $K$ is
$$
    \rho_K(u)=\max \{ \lambda : \lambda u \in K\}, \ \ u\in \S^{n-1}.
$$

For any $K\in \CK^n$ containing the origin in its interior, the {\sl polar} of $K$ is the convex body $K^\circ$ such that
\begin{equation}\label{polar}
   \rho_K(u)h_{K^\circ}(u) = 1, \ \ u\in \S^{n-1}.
\end{equation}

The {\em projection body} $\PiK$ of $K$ is the convex body defined by
\begin{equation}\label{projbody}
h_{\PiK}(u) = \vol_{n-1}(K|u^\perp), \qquad u\in \S^{n-1}.
\end{equation}
The body $\PiK$ is origin symmetric; in fact, it is a full-dimensional {\em zonoid} (see e.g. \cite[p. 302]{Schneider}, \cite{SchneiderWeil}). If $P$ is a polytope in $\CK^n$, then $\PiP$ is a {\em zonotope}, i.e.,
the Minkowski sum of finitely many line segments. More precisely, if $F_1,\ldots,F_m$
denote the facets of $P$ with corresponding outward unit normals $a_1,\ldots,a_m$, then
\begin{equation}\label{eq-zonotope}
\PiP = \frac{1}{2} \sum_{j=1}^m \vol_{n-1}(F_j) [-a_j,a_j],
\end{equation}
where $[-a_j,a_j]$ denotes the line segment from $-a_j$ to $a_j$.

We shall make use of an inequality due to Zhang \cite{Zhang} involving the polar $(\PiK)^{\circ}$ of the
projection body $(\PiK)^{\circ}$ of $K$. Using the abbreviation $(\PicK)$ for $(\PiK)^{\circ}$, it says that, for $K\in \CK^n$,
\begin{equation}\label{prop-zang}
     \vol_n(K)^{n-1}\geq \frac{\binom{2n}{n}}{n^n\vol_n(\PicK)}
\end{equation}
with equality if and only if $K$ is a simplex.

Finally, a {\it rectangular crosspolytope} is the convex hull of segments $[-\alpha_iu_i,\alpha_iu_i]$ with $\alpha_i \in (0,\infty)$, $i\in [n]$, where $\{u_1,\dots, u_n\}$ is a frame. A {\it coordinate crosspolytope} is one corresponding to the standard orthonormal basis.

\section{Main results and open problems}\label{sec-main}

Our first theorem links best frames for polytopes to their
facets. More specifically, we have the following result.

\begin{thm}\label{thm-facets}
Let $P\in \CK^n$ be a polytope and let $F=\{u_1,\ldots,u_n\}$ be a best frame for $F$.
Then at least $n-1$ of the vectors $u_i$ are parallel to a facet of $P$.
\end{thm}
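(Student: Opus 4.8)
The plan is to reformulate the problem as a volume-minimization problem for a box circumscribed about the projection body $\PiP$, and then exploit the facet structure of the zonotope $\PiP$. By definition, $\Psi(P;F)=\prod_{i=1}^n h_{\PiP}(u_i)=\prod_{i=1}^n\tfrac12\bigl(h_{\PiP}(u_i)+h_{\PiP}(-u_i)\bigr)$, since $\PiP$ is origin symmetric; this last product is exactly $2^{-n}$ times the volume of the smallest box with facet normals $u_1,\dots,u_n$ containing $\PiP$. Hence a best frame for $P$ is precisely a frame of edge directions of a \emph{minimum-volume circumscribed rectangular box} of the zonotope $Z=\PiP$. (This identification is presumably the content of Section \ref{sec-face-structure}, which I may invoke.) So it suffices to show: if $B$ is a minimum-volume box circumscribed about a full-dimensional zonotope $Z$, then at least $n-1$ of the $n$ facet directions of $B$ are parallel to an edge of $Z$ — equivalently, by (\ref{eq-zonotope}), parallel to a facet of $P$.

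The core of the argument is a first-order optimality (variational) computation. Fix an optimal box $B$ with orthonormal normals $u_1,\dots,u_n$ and write $H(u)=h_Z(u)+h_Z(-u)$, so we are minimizing $\prod_i H(u_i)$ over frames. I would perturb the frame by an infinitesimal rotation $R(t)=e^{t\Theta}$, $\Theta$ skew-symmetric, replacing $u_i$ by $u_i(t)=R(t)u_i$. Taking $\log$ and differentiating at $t=0$, stationarity forces
\[
\sum_{i=1}^n \frac{H'(u_i;\Theta u_i)}{H(u_i)} = 0
\]
for every skew-symmetric $\Theta$, where $H'(u;v)$ is the one-sided directional derivative of the (convex, hence one-sided-differentiable) function $H$. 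Choosing $\Theta$ to be a single "rotation in the $u_i$–$u_j$ plane" isolates the pair $(i,j)$ and yields, for each pair $i\neq j$, a relation between the directional derivatives of $h_Z$ at $u_i$ in the direction $\pm u_j$ (and vice versa). Now $h_Z$ is smooth at a direction $u$ precisely when the supporting hyperplane of $Z$ with normal $u$ touches $Z$ in a single vertex; it is non-differentiable exactly at normals of positive-dimensional faces — in particular at normals parallel to (i.e. normal to) a facet of $Z$, where the left and right derivatives in a generic tangential direction differ by the width of that facet.

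The decisive step — and the main obstacle — is the following combinatorial/convexity dichotomy: for each index $i$, either $u_i$ is parallel to an edge of $Z$, or $h_Z$ is differentiable at $u_i$ (a "smooth" direction); and the pairwise stationarity relations are incompatible with having \emph{two} distinct smooth indices $i,j$. Concretely, if $h_Z$ is differentiable at both $u_i$ and $u_j$ then the $(i,j)$ optimality equation reduces to a clean identity forcing the supporting points of $Z$ in directions $u_i,u_j$ to be related to the edge lengths of the box $B$ in a way that a \emph{strict} second-variation inequality rules out unless $Z$ degenerates — contradicting full-dimensionality. The cleanest route is probably to compute the second derivative of $\log\prod_i H(u_i(t))$ for a rotation in the $u_i$–$u_j$ plane: at a smooth pair this second derivative is shown to be negative, so $B$ could not be a minimizer. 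Thus at most one index is "smooth", i.e. at least $n-1$ of the $u_i$ lie in the normal cone of some facet of $Z$, hence are parallel to a facet of $P$. I would handle the (generic) case where the supporting faces are exactly vertices or facets first, and then argue that higher-codimensional contact faces only help, by a limiting/approximation argument or by noting that an intermediate-dimensional contact face still forces $h_Z$ to be non-smooth at $u_i$ in some direction and one still gets the required edge-parallelism after a further planar rotation reduces $\Psi$. The regularity bookkeeping of one-sided derivatives of support functions of zonotopes is where the real work lies.
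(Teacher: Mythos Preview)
Your reformulation via the minimum-volume box around $\PiP$ is correct and is exactly the paper's Lemma \ref{rem-frame-box}. However, there is a persistent parallel/perpendicular confusion in what follows: a facet of $B$ with outward normal $u_i$ contains a positive-dimensional face of $Z=\PiP$ precisely when $u_i$ is \emph{perpendicular} to some edge direction of $Z$, not parallel; since the edge directions of $Z$ are the $a_j$, this gives $u_i\perp a_j$, which is the desired ``$u_i$ parallel to the facet $F_j$ of $P$''. Your dichotomy should therefore read: either the supporting face of $Z$ at $u_i$ has dimension at least one (so $u_i\perp a_j$ for some $j$), or it is a single vertex and $h_Z$ is differentiable at $u_i$. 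Likewise, ``$u_i$ lies in the normal cone of some facet of $Z$'' overshoots; you only need $u_i$ in the normal cone of an \emph{edge}.

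Modulo that slip, your strategy is sound but more laborious than the paper's. The paper does not perform a first/second variation in $\R^n$. Instead it proves directly (Theorem \ref{GeneralizedO'Rourke}) that a minimum-volume box around \emph{any} polytope $Q$ has at most one pair of opposite facets each meeting $Q$ in a single point, via a clean reduction: if two such pairs existed, with normals $v_1,v_2$, freeze $v_3,\dots,v_n$ and rotate only in $V=\lin\{v_1,v_2\}$. The remaining factors $H(v_k)$, $k\ge 3$, are constant, so one is minimizing the area of a rectangle around the planar polygon $Q|V$, and the classical Freeman--Shapira theorem (Proposition \ref{FreemanShapira}) forces an edge of that rectangle to contain an edge of $Q|V$ --- contradiction. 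Applying this with $Q=\PiP$ and using that every positive-dimensional face of a zonotope contains a translate of some generating segment $[-a_j,a_j]$ finishes the argument in a few lines.

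Your proposed second-variation computation, once carried out, would amount to a re-derivation of Freeman--Shapira: under a planar rotation with both contacts at vertices, each width is locally of the form $R\cos(\theta-\phi)$ and the log of the product has strictly negative second derivative, so a stationary point cannot be a minimum. That is correct, and it is precisely how the paper proves the perimeter analogue (Theorem \ref{FreemanShapira1}). So the ``main obstacle'' you flag and the ``regularity bookkeeping of one-sided derivatives'' evaporate once you observe that the $n$-dimensional problem decouples into a planar one and invoke Freeman--Shapira as a black box. No genericity or limiting argument is needed.
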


The proof of Theorem \ref{thm-facets} will be given in Section \ref{sec-face-structure}.
Our next result settles the problem in the plane.

\begin{thm}\label{Planarcase}
If $K$ is a planar convex body, then
$$\Lambda(K)\geq \frac{1}{2},$$
and equality holds if and only if $K$ is a triangle.
\end{thm}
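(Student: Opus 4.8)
The plan is to reduce to polytopes and then exploit Theorem~\ref{thm-facets}. Since $\Lambda(K)$ is continuous in the Hausdorff metric (both $\vol_2$ and the projection lengths $\vol_1(K|u^\perp)$ are continuous, and the minimum over the compact set $\CF^2$ is attained), and polytopes are dense in $\CK^2$, it suffices to prove $\Lambda(P)\ge 1/2$ for every convex polygon $P$, with equality characterized among triangles. Fix a best frame $F=\{u_1,u_2\}$ for $P$; by Theorem~\ref{thm-facets} at least $n-1=1$ of the $u_i$ is parallel to an edge of $P$, say $u_1$ is parallel to an edge $E$. Then $u_2=u_1^\perp$ is perpendicular to $E$, so $\vol_1(P|u_2^\perp)=h_P(u_1)+h_P(-u_1)$ equals the \emph{width} of $P$ in the direction $u_1$, which is the distance between the two supporting lines parallel to $E$. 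Call this width $w$, and write the edge $E$ (one of the two supporting lines in direction $u_1$) as having length $\ell=\vol_1(E)$. The other projection factor is $\vol_1(P|u_1^\perp)$, the length of the shadow of $P$ on the line spanned by $u_2$.

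The key geometric estimate is then: $\vol_2(P)\ge \tfrac12\,\ell\cdot w$, because $P$ contains the triangle formed by the edge $E$ together with any vertex of $P$ on the opposite supporting line, and that triangle has base $\ell$ and height $w$. Meanwhile $\Psi(P;F)=\vol_1(P|u_1^\perp)\cdot w$, and I claim $\vol_1(P|u_1^\perp)\le \ell$ is \emph{not} what we want — rather, we should compare against the whole product. Writing $a=\vol_1(P|u_1^\perp)$ for the shadow length on $\mathrm{span}(u_2)$, we have
$$
\Lambda(P;F)=\frac{\vol_2(P)}{a\,w}\ \ge\ \frac{\tfrac12\,\ell\,w}{a\,w}=\frac{\ell}{2a}.
$$
So it remains to show $a\le \ell$, i.e. the projection of $P$ onto the line through $u_2$ is no longer than the edge parallel to $u_1$. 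This is the crux: it need not hold for an arbitrary edge $E$ parallel to $u_1$, but it does hold when $E$ is chosen to be the \emph{longer} of the two edges realizing the width (or, more carefully, one must argue that for a best frame the relevant edge can be taken long enough). The honest route is: among the two supporting lines in direction $u_1$, at least one meets $P$ in a segment of length $\ge a$, since the two edges (or edge and vertex) together with the side-parts of $\partial P$ project onto an interval of length $a$ and monotonicity of the boundary in the $u_2$-coordinate forces one of the two "caps" to span the full width $a$ of the shadow. Taking $E$ to be that edge gives $\ell\ge a$ and hence $\Lambda(P)=\Lambda(P;F)\ge 1/2$.

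The main obstacle is exactly this last combinatorial-geometric step — verifying that a best frame forces an edge parallel to $u_1$ whose length dominates the orthogonal shadow $a$. I expect to handle it by a direct convexity argument on the polygon: parametrize $\partial P$ by the $u_2$-coordinate $t\in[0,a]$; the "upper" and "lower" boundary arcs are each graphs of concave/convex functions, the edge $E$ is a maximal linear piece of one of them lying on a line $u_2$-coordinate $=w$ or $=0$, and one shows the union of the two extreme horizontal edges covers all of $[0,a]$ in the $u_2$-direction, so one of them has $u_2$-extent $\ge a/?$... and then using that $u_1\parallel E$ makes its length equal to that $u_2$-extent only up to the orientation — here I must be careful that $E$ is parallel to $u_1$, hence horizontal in the $(u_1,u_2)$-frame, so its length is measured in the $u_1$-direction and is \emph{independent} of $a$; thus the correct statement to prove is that the degenerate triangle bound already suffices once $\ell$ is the base of a triangle of height $w$ inscribed in $P$, and the inequality $a\le\ell$ should be replaced by the sharper observation that $\vol_2(P)\ge\tfrac12 a w$ directly, since $P$ contains a triangle with base the full shadow $a$ (along $u_2$) and height $w$. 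That reformulation removes the obstacle entirely: $P$ projects onto a segment of length $a$ on $\mathrm{span}(u_2)$, the two endpoints of that projection are images of points $p,q\in P$, and together with a point of $P$ at distance $w$ in the $u_1$-direction (which exists since $u_1\parallel E$ means $P$ has width exactly... ) one builds a triangle of area $\ge\tfrac12 a w=\tfrac12\Psi(P;F)$.

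\begin{rem}
For equality, $\vol_2(P)=\tfrac12 a w$ forces $P$ to \emph{be} that inscribed triangle, i.e. $P$ is a triangle with one edge parallel to $u_1$; conversely, for a triangle one checks by the classical fact that the minimal bounding box (equivalently, by Theorem~\ref{thm-facets}, the best frame is aligned with an edge) gives $\Psi(P)=a\,w$ with $\vol_2(P)=\tfrac12 a w$ exactly, so $\Lambda(P)=1/2$. Hence equality holds precisely for triangles.
\end{rem}
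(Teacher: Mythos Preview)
Your proposal has a genuine gap in the core inequality step. After invoking Theorem~\ref{thm-facets} to get a best frame with $u_1$ parallel to an edge $E$, you ultimately try to show $\vol_2(P)\ge\tfrac12\Psi(P;F)$ by exhibiting an inscribed \emph{triangle} of area $\tfrac12\Psi(P;F)$ (your ``sharper observation''). This is false in general: for the rhombus $R_l$ with diagonals $2$ and $2l$, $0<l<1$, the best frame is edge-parallel (Lemma~\ref{rem-not-attained}), the bounding box has area $\Psi=4l/(1+l^2)$, but the largest triangle inscribed in any parallelogram has area half the parallelogram, here $l$. Since $l<2l/(1+l^2)$ for $l<1$, no inscribed triangle reaches $\tfrac12\Psi$. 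So three points cannot carry the argument, and your earlier route via $\ell\ge a$ fails for the reason you already noticed (indeed $\ell\le a$ always, since $E$ is parallel to $u_1$).

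The paper's proof avoids this by \emph{not} working with the best frame at all. It chooses the frame with $u_1$ along a diameter $[a,b]$ of $K$; this forces $a$ and $b$ to lie on opposite sides of the bounding box and to have the \emph{same} $u_2$-coordinate. Picking $c,d$ on the remaining two sides, the quadrilateral $\conv\{a,b,c,d\}$ then has area exactly $\tfrac12\vol_2(B)$ (split it along $[a,b]$ into two triangles whose heights sum to the box height). Thus $\vol_2(K)\ge\tfrac12\vol_2(B)$ for this particular frame, which already gives $\Lambda(K)\ge\tfrac12$. The equality analysis then uses Proposition~\ref{FreemanShapira} to force $c$ or $d$ onto $[a,b]$, yielding a triangle. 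Note also that your reduction to polytopes by continuity is fine for the inequality but does not by itself deliver the equality characterization for general $K$; the paper's diameter argument works directly for all convex bodies.
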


The planar case suggests computing the $\LW$-constant $\Lambda(S_n)$ of $n$-dimensional simplices $S_n$.

\begin{thm}\label{thm-simplex}
Let $S_n$ be an $n$-dimensional simplex in $\R^n$, $n\geq 3$. Then
$$
\frac{(n-1)!}{2^{n-2}n^{n-1}}<\Lambda (S_n) \leq \Lambda (T_n) = \frac{(n-1)!}{n^{n-1}}.
$$
\end{thm}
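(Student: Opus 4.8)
I would work throughout with the projection body. Since $\vol_{n-1}(K\mid u^{\perp})=h_{\Pi K}(u)$, for a frame $F=\{u_1,\dots,u_n\}$ one has $\Psi(K;F)=\prod_{i=1}^{n}h_{\Pi K}(u_i)$, and the set $\{x:|u_i\cdot x|\le h_{\Pi K}(u_i),\ i\in[n]\}$ is the smallest box with facet normals $u_1,\dots,u_n$ containing the origin-symmetric body $\Pi K$; it has volume $2^{n}\Psi(K;F)$. Writing $B(\Pi K)$ for the least volume of a box circumscribed about $\Pi K$ over all orientations — the quantity attached to a best frame in Section~\ref{sec-face-structure} — we get $2^{n}\Psi(K)=B(\Pi K)$ and $\Lambda(K)=2^{n}\vol_n(K)^{n-1}/B(\Pi K)$. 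For a simplex $S_n$ with facets $F_0,\dots,F_n$ and outer unit normals $a_0,\dots,a_n$ this means exploiting the zonotope $\Pi S_n=\frac12\sum_{j=0}^{n}\vol_{n-1}(F_j)[-a_j,a_j]$, together with $\sum_{j}\vol_{n-1}(F_j)a_j=0$.

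\emph{The value $\Lambda(T_n)$.} For the regular simplex the facet volumes are equal and $a_0,\dots,a_n$ are the vertices of a regular $n$-simplex inscribed in $\S^{n-1}$, so $a_j\cdot a_k=-1/n$ for $j\neq k$. I would exhibit a \emph{recursive frame}: set $u_n:=a_0$; inside $a_0^{\perp}$ the vectors $a_j+\frac1n a_0$ $(j\ge1)$, once normalised, again form the vertex set of a regular $(n-1)$-simplex inscribed in the unit sphere of $a_0^{\perp}$ (one checks $\|a_j+\frac1n a_0\|^2=\frac{n^2-1}{n^2}$ and pairwise inner products $-\frac1{n-1}$ after normalising), so take $u_1,\dots,u_{n-1}$ to be the recursive frame for that configuration. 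Since $h_{\Pi T_n}(a_0)$ equals the facet area and, for $u_i\in a_0^{\perp}$, $h_{\Pi T_n}(u_i)$ is (a fixed scalar times) the corresponding value for the smaller regular simplex, carrying the recursion down to the $1$-dimensional base case gives $\Psi(T_n;F)=\frac{n^n}{n!}\vol_n(T_n)^{n-1}$, hence $\Lambda(T_n;F)=\frac{(n-1)!}{n^{n-1}}$. (As a consistency check, this is exactly $\Lambda$ of the regular crosspolytope, where Meyer's inequality~(\ref{Meyerinequality}) holds with equality.) That this frame is optimal, so that $\Lambda(T_n)=\frac{(n-1)!}{n^{n-1}}$, I would deduce from Theorem~\ref{thm-facets} (a best frame has $n-1$ vectors parallel to facets of $T_n$), the symmetry group of $T_n$ acting on frames, and convexity in the remaining angular parameters; reducing the optimization to the recursive frame is the most computational point of this step.

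\emph{The upper bound $\Lambda(S_n)\le\Lambda(T_n)$.} Combining the reformulation above with Zhang's inequality~(\ref{prop-zang}), which for a simplex holds with equality, $\vol_n(S_n)^{n-1}=\binom{2n}{n}\big/\big(n^{n}\vol_n((\Pi S_n)^{\circ})\big)$, the desired inequality is equivalent to
$$
\vol_n\big((\Pi S_n)^{\circ}\big)\cdot B(\Pi S_n)\ \ge\ \vol_n\big((\Pi T_n)^{\circ}\big)\cdot B(\Pi T_n).
$$
Both sides depend only on the configuration $(a_j,\vol_{n-1}(F_j))_{j=0}^{n}$ of facet normals and areas, ranging over all systems of $n+1$ unit vectors with positive weights summing (weighted) to zero. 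I would prove that this product is minimised by the symmetric, equal-weight configuration of $T_n$ by a symmetrisation / shadow-system argument — shadow systems interact well with projections, so that along a suitable system deforming $S_n$ into $T_n$ one can hope that $\vol_n((\Pi S_n)^{\circ})$ and the circumscribed-box volume move monotonically. \textbf{The main obstacle} is that $B(\Pi S_n)$ is itself a minimum over frames, so one must control the optimal frame as the configuration is deformed; this is the delicate part, and I expect it to require the bulk of the work for this direction.

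\emph{The lower bound $\Lambda(S_n)>\frac{(n-1)!}{2^{n-2}n^{n-1}}$.} I would use a frame adapted to a flag of faces. Choose a facet $F_0$ of $S_n$, opposite vertex $v_0$, altitude $H$, outer unit normal $a_0$; set $u_n:=a_0$ and let $u_1,\dots,u_{n-1}$ be an optimal frame (inside $a_0^{\perp}$) for the $(n-1)$-simplex $F_0$. Since $F_0$ is parallel to $a_0^{\perp}$, for $i<n$ the shadow $F_0\mid u_i^{\perp}$ is $(n-2)$-dimensional and $S_n\mid u_i^{\perp}$ is a pyramid over it with apex $v_0\mid u_i^{\perp}$ at distance $H$, so $\vol_{n-1}(S_n\mid u_i^{\perp})=\frac{H}{n-1}\vol_{n-2}(F_0\mid u_i^{\perp})$, while $\vol_{n-1}(S_n\mid u_n^{\perp})=\vol_{n-1}\big(\conv(F_0\cup\{v_0\mid a_0^{\perp}\})\big)$. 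Using $\vol_n(S_n)=\frac1n\vol_{n-1}(F_0)H$, this yields
$$
\Psi(S_n)\ \le\ \vol_{n-1}\!\big(S_n\mid a_0^{\perp}\big)\,\frac{H^{\,n-1}}{(n-1)^{n-1}}\,\Psi(F_0),
\qquad
\Lambda(S_n)\ \ge\ \frac{(n-1)^{n-1}\,\vol_{n-1}(F_0)}{n^{n-1}\,\vol_{n-1}(S_n\mid a_0^{\perp})}\,\Lambda(F_0).
$$
The remaining \textbf{key point} (the second obstacle) is to choose the splitting vertex $v_0$ so that $\vol_{n-1}(S_n\mid a_0^{\perp})\le 2\,\vol_{n-1}(F_0)$ — for $n=2$ the edge opposite the widest angle even gives ratio $1$, which is why $\Lambda(\text{triangle})=\frac12$. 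Granting this, iterating from dimension $n$ down to dimension $2$, using Theorem~\ref{Planarcase} at the bottom and the telescoping identity $\prod_{m=3}^{n}\frac{(m-1)^{m-1}}{m^{m-1}}=\frac{2(n-1)!}{n^{n-1}}$, gives $\Lambda(S_n)\ge\frac1{2^{\,n-2}}\cdot\frac{2(n-1)!}{n^{n-1}}\cdot\frac12=\frac{(n-1)!}{2^{\,n-2}n^{n-1}}$, the inequality being strict because the factor-$2$ estimates cannot all be tight at once.
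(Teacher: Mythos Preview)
Your lower-bound skeleton matches the paper's: pick a facet $F_0$, set $u_n$ equal to its outward normal $a_0$, take an optimal frame for $F_0$ in $a_0^{\perp}$, and obtain $\Lambda(S_n)\ge\frac{(n-1)^{n-1}}{n^{n-1}}\cdot\frac{\vol_{n-1}(F_0)}{\vol_{n-1}(S_n\mid a_0^{\perp})}\cdot\Lambda(F_0)$. The point you leave open is resolved in the paper by choosing $F_0$ to be a facet of \emph{largest} $(n-1)$-volume. Maximality forces the projection $p_1$ of the opposite vertex onto $a_0^{\perp}$ to lie in the simplex $L$ whose facets are parallel to the facets of $F_0$ and pass through the respective opposite vertices of $F_0$ (otherwise some other facet of $S_n$ would project to a set of larger area than $F_0$). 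Since $x\mapsto\vol_{n-1}(\conv(F_0\cup\{x\}))$ is convex on $a_0^{\perp}$, its maximum over $L$ is attained at a vertex of $L$, where it equals exactly $2\vol_{n-1}(F_0)$; hence $\vol_{n-1}(S_n\mid a_0^{\perp})<2\vol_{n-1}(F_0)$, with strict inequality. This is the missing ingredient in your recursion.

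Your route to the upper bound, by contrast, is a genuine gap and far harder than what is actually needed. The paper does not go through Zhang's equality or try to control the circumscribed-box volume $B(\Pi S_n)$ under a symmetrisation; that optimisation-inside-an-inequality difficulty you flag is simply avoided. Instead one uses a property specific to simplices: for every $v\in\S^{n-1}$, $\vol_n(S_n)=\frac{l(v)}{n}\,\vol_{n-1}(S_n\mid v^{\perp})$, where $l(v)$ is the length of the longest chord of $S_n$ in direction $v$. Applied to each $u_i$ of an \emph{arbitrary} frame $F$, this gives $\Lambda(S_n;F)=\prod_i l(u_i)\big/(n^{n}\vol_n(S_n))$. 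Since $S_n$ contains the convex hull $Q$ of the $n$ maximal chords and $\vol_n(Q)\ge\prod_i l(u_i)/n!$ (a short Rogers--Shephard-type estimate), one gets $\Lambda(S_n;F)\le n!/n^{n}$ for every frame, hence $\Lambda(S_n)\le(n-1)!/n^{n-1}$ with no optimisation at all. In particular your separate optimality argument for the recursive frame of $T_n$ (via Theorem~\ref{thm-facets}, symmetry, and convexity in angular parameters) becomes unnecessary: the recursive frame shows $\Lambda(T_n)\ge(n-1)!/n^{n-1}$, and the chord argument gives the reverse inequality.

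A small side remark: your consistency check is off. The quantity $(n-1)!/n^{n-1}$ is the $\LW$-\emph{ratio} of the regular crosspolytope $C_n$ in the standard frame (where sections and projections coincide), not $\Lambda(C_n)$ itself; by Theorem~\ref{thm-C_n} the latter is strictly larger for $n\ge2$.
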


The next result follows immediately from (\ref{Meyerinequality}) with the aid of
Theorem \ref{thm-facets} for excluding equality.

\begin{prop}\label{rem-special-bodies}
Let $K\in \CK^n$. If
$$
    \vol_{n-1}(K|e_i^\perp)=\vol_{n-1}(K\cap e_i^\perp),\qquad i\in [n]
$$
then
$$
\Lambda (K) > \frac{(n-1)!}{n^{n-1}}.
$$
\end{prop}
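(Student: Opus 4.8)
The plan is to combine Meyer's inequality (\ref{Meyerinequality}) with Theorem \ref{thm-facets}, exactly as announced. First I would use the hypothesis to rewrite (\ref{Meyerinequality}): since $\vol_{n-1}(K\cap e_i^\perp)=\vol_{n-1}(K|e_i^\perp)$ for every $i\in[n]$, inequality (\ref{Meyerinequality}) becomes
$$
\vol_n(K)^{n-1}\geq\frac{(n-1)!}{n^{n-1}}\prod_{i=1}^n\vol_{n-1}(K|e_i^\perp)=\frac{(n-1)!}{n^{n-1}}\,\Psi\bigl(K;\{e_1,\dots,e_n\}\bigr),
$$
so that $\Lambda\bigl(K;\{e_1,\dots,e_n\}\bigr)\geq(n-1)!/n^{n-1}$ and hence $\Lambda(K)\geq(n-1)!/n^{n-1}$. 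What remains is to rule out equality.

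If (\ref{Meyerinequality}) is strict for $K$, then already $\Lambda\bigl(K;\{e_1,\dots,e_n\}\bigr)>(n-1)!/n^{n-1}$ and we are done; so assume equality holds in (\ref{Meyerinequality}). By the equality characterization accompanying (\ref{Meyerinequality}), $K$ is then a coordinate crosspolytope $\conv\{[-\alpha_ie_i,\alpha_ie_i]:i\in[n]\}$ with $\alpha_i\in(0,\infty)$. Its facets are the $2^n$ simplices $\conv\{\epsilon_1\alpha_1e_1,\dots,\epsilon_n\alpha_ne_n\}$, $\epsilon\in\{-1,1\}^n$, whose outward normals are proportional to $(\epsilon_1/\alpha_1,\dots,\epsilon_n/\alpha_n)$ and thus have all coordinates nonzero. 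Hence no $e_i$ is orthogonal to a facet normal, i.e., none of $e_1,\dots,e_n$ is parallel to a facet of $K$. By Theorem \ref{thm-facets} a best frame for the polytope $K$ has at least $n-1$ of its vectors parallel to a facet, so $\{e_1,\dots,e_n\}$ is not a best frame for $K$.

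To finish, I would invoke attainment: the minimum defining $\Psi(K)$ is attained (compactness of $\S^{n-1}$), and it is attained precisely at best frames; since $\{e_1,\dots,e_n\}$ is not one, $\Psi\bigl(K;\{e_1,\dots,e_n\}\bigr)>\Psi(K)$. Consequently
$$
\Lambda(K)=\frac{\vol_n(K)^{n-1}}{\Psi(K)}>\frac{\vol_n(K)^{n-1}}{\Psi\bigl(K;\{e_1,\dots,e_n\}\bigr)}=\Lambda\bigl(K;\{e_1,\dots,e_n\}\bigr)=\frac{(n-1)!}{n^{n-1}},
$$
the last equality because in this case (\ref{Meyerinequality}) is an equality and the sections coincide with the projections. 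Thus $\Lambda(K)>(n-1)!/n^{n-1}$ in both cases. The only step needing care is that failing to be a best frame genuinely forces a strictly larger projection average — which is precisely where attainment of the minimum enters; beyond the elementary facet description of the crosspolytope I foresee no real obstacle, since Theorem \ref{thm-facets} is what excludes the equality case.
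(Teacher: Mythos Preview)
Your argument is correct and follows exactly the route the paper indicates: apply Meyer's inequality (\ref{Meyerinequality}) under the hypothesis to get $\Lambda(K;\{e_1,\dots,e_n\})\ge(n-1)!/n^{n-1}$, and in the equality case (where $K$ is a coordinate crosspolytope) invoke Theorem~\ref{thm-facets} to see that the standard basis cannot be a best frame, yielding the strict inequality. The paper gives no more detail than this one-sentence outline, and your write-up fills it in accurately.
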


For the general case we have a somewhat weaker bound.

\begin{thm}\label{thm-lower bound}
For every $K\in \CK^n$,
$$
\Lambda (K) \geq \frac{\binom{2n}{n}}{(2n)^{n}}.
$$
\end{thm}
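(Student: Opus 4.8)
The plan is to combine Zhang's inequality \eqref{prop-zang} with a volume bound for the polar of the projection body obtained from a well-chosen frame. Fix $K\in\CK^n$ and let $F=\{u_1,\ldots,u_n\}$ be a best frame for $K$, so that $\Psi(K;F)=\Psi(K)=\prod_{i=1}^n\vol_{n-1}(K|u_i^\perp)=\prod_{i=1}^n h_{\PiK}(u_i)$. By \eqref{prop-zang} we have
\[
\vol_n(K)^{n-1}\ \geq\ \frac{\binom{2n}{n}}{n^n\,\vol_n(\PicK)},
\]
so it suffices to show that $\vol_n(\PicK)\leq 2^n/\Psi(K)$, i.e. that the polar of the projection body has volume at most $2^n\big/\prod_{i=1}^n h_{\PiK}(u_i)$. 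Indeed, once this is established, $\Lambda(K)=\vol_n(K)^{n-1}/\Psi(K)\geq \binom{2n}{n}\big/(n^n\cdot 2^n\cdot\Psi(K))\cdot\Psi(K)=\binom{2n}{n}/(2n)^n$, which is the claim.

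First I would establish the key geometric estimate: for \emph{any} origin-symmetric convex body $L$ (here $L=\PiK$) and any frame $F=\{u_1,\ldots,u_n\}$,
\[
\vol_n(L^\circ)\ \leq\ \prod_{i=1}^n \frac{2}{h_L(u_i)}.
\]
This follows because $h_L(u_i)u_i^\perp$-type support information controls $L^\circ$ from outside: since $h_L(u_i)=\max\{u_i^Tx:x\in L\}$, each point $y\in L^\circ$ satisfies $|u_i^Ty|\le 1/\rho_L(\pm u_i)\le \cdots$; more directly, $L\supseteq$ nothing useful, so instead use that $L^\circ\subseteq\{y:\ |u_i^Ty|\le 1/(\text{something})\}$. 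The clean way is: because $h_L(u_i)u_i\in$ (boundary direction), the point $u_i/h_L(u_i)$ lies in $L^\circ$ only if\ldots\ — rather, observe $L\supseteq [-h_L(u_i)u_i,h_L(u_i)u_i]$? That is false in general. The correct containment is the reverse: since $h_L(u_i)=h_L(u_i)$, the slab $\{x:|u_i^Tx|\le h_L(u_i)\}$ contains $L$, hence $L^\circ\supseteq [-u_i/h_L(u_i),u_i/h_L(u_i)]$, and by taking the convex hull over $i$ we only get a lower bound on $L^\circ$, not an upper one. So the genuine route is: $L\supseteq\frac12\big(\text{zonotope from the }u_i\text{ with appropriate lengths}\big)$ is \emph{not} available either. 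What \emph{is} available is that $L^\circ$ is contained in the box $\prod_i[-1/h_L(u_i),1/h_L(u_i)]$ \emph{only after} noting $h_{L^\circ}(u_i)=1/\rho_L(u_i)$ and $\rho_L(u_i)\ge$ ?; this needs $\rho_L(u_i)\ge h_L(u_i)/\|u_i\|^2$? No. The honest statement that does hold is $h_{L^\circ}(u_i)=1/\rho_L(u_i)$ together with $\rho_L(u_i)h_L(u_i)\ge 1$ being false. Hence the real mechanism must be: $L\subseteq \{x:|u_i^Tx|\le h_L(u_i)\}$ for each $i$ gives $L\subseteq\prod_i[-h_L(u_i),h_L(u_i)]$ in the $u_i$-coordinates, so $L^\circ\supseteq\frac{1}{\sqrt{\sum 1/h_L(u_i)^2}}\B^n$ — again a lower bound. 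The upper bound on $\vol_n(L^\circ)$ instead comes from $L\supseteq$ the cross-polytope $\conv\{\pm \rho_L(u_i)u_i\}$, whence $L^\circ\subseteq\prod_i[-1/\rho_L(u_i),1/\rho_L(u_i)]$ in $u_i$-coordinates, giving $\vol_n(L^\circ)\le\prod_i 2/\rho_L(u_i)=\prod_i 2h_{L^\circ}(u_i)$. Now apply this to $L=\PiK$: $\vol_n(\PicK)\le\prod_i 2\,h_{(\PiK)^\circ}(u_i)=\prod_i 2/\rho_{\PiK}(u_i)$. Finally bound $\rho_{\PiK}(u_i)\ge h_{\PiK}(u_i)/(2r)$? — no; one uses instead the trivial pointwise inequality $\rho_{\PiK}(u)\ge h_{\PiK}(u)/\text{diam}$ — still messy. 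The cleanest correct bound: for the \emph{origin-symmetric} $\PiK$, $\rho_{\PiK}(u_i)\ge \min_j h_{\PiK}(u_j)\cdot(\text{no})$. I therefore expect the actual proof to bound $\vol_n(\PicK)$ via the inclusion $\PicK\subseteq$ the polar of the inscribed cross-polytope of $\PiK$ spanned by the frame directions scaled by $h_{\PiK}(u_i)$ \emph{assuming} $h_{\PiK}(u_i)u_i\in\PiK$, which holds because $h_{\PiK}(u_i)=h_{\PiK}(u_i)$ and $\PiK$ is a zonoid symmetric about $0$ containing the segment $[-h_{\PiK}(u_i)u_i,h_{\PiK}(u_i)u_i]$ precisely when $u_i$ is an extreme normal direction — which, by Theorem \ref{thm-facets} for polytopes and approximation, may be arranged for a best frame. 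Granting $h_{\PiK}(u_i)u_i\in\PiK$ for all $i$, we get $\PiK\supseteq\conv\{\pm h_{\PiK}(u_i)u_i:i\in[n]\}$, hence $\vol_n(\PicK)\le 2^n/\prod_i h_{\PiK}(u_i)=2^n/\Psi(K)$, completing the argument as in the first paragraph.

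The main obstacle is exactly this last containment $h_{\PiK}(u_i)u_i\in\PiK$: it is not automatic for an arbitrary frame. I would handle it by first proving the theorem for polytopes $P$, where Theorem \ref{thm-facets} guarantees that a best frame has at least $n-1$ of the $u_i$ parallel to facets of $P$; a direction $u_i$ parallel to a facet is an extreme point of $\PiP$ in direction $u_i$ up to scaling, giving the needed containment for those $n-1$ indices, and a short separate estimate (or a perturbation of the frame losing only a factor that vanishes in the limit) covers the remaining index. One then passes to general $K\in\CK^n$ by approximating $K$ by polytopes $P_k\to K$ in the Hausdorff metric, using continuity of $\vol_n$, of $K\mapsto\vol_{n-1}(K|u^\perp)=h_{\PiK}(u)$, and of the min/max over the compact set $\CF^n$, so that $\Lambda(P_k)\to\Lambda(K)$ and the uniform bound $\binom{2n}{n}/(2n)^n$ survives the limit. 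If the facet-parallel trick proves too lossy for the last coordinate, the fallback is to drop best-frame optimality and instead pick \emph{any} frame adapted to a smallest-volume box containing $\PiK$ (cf. Section \ref{sec-face-structure}), for which all $n$ inclusions $h_{\PiK}(u_i)u_i\in\PiK$ hold by the touching condition of the minimal box; this only increases $\Psi(K)$ relative to $\Psi(K;F)$ in a controlled way and still yields the stated lower bound on $\Lambda(K)$.
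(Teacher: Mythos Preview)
Your high-level plan---combine Zhang's inequality \eqref{prop-zang} with a bound of the form $\vol_n(\PicK)\le 2^n/\prod_i h_{\PiK}(u_i)$ for a suitable frame---is exactly the paper's strategy. The genuine gap is that you never succeed in producing such a frame, and the mechanisms you propose do not work. The containment you need, $h_{\PiK}(u_i)\,u_i\in\PiK$, is \emph{not} implied by $u_i$ being parallel to a facet of $P$ (that only says $u_i$ is orthogonal to one of the generating segments of the zonotope $\PiP$, which has nothing to do with the supporting point in direction $u_i$ lying on the ray $\R_{\ge 0}u_i$). Your fallback, the ``touching condition'' for the minimal box containing $\PiK$, likewise fails: each facet of the box touches $\PiK$, but generally not at the point $h_{\PiK}(u_i)\,u_i$. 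At bottom you are trying to force $\rho_{\PiK}(u_i)\ge h_{\PiK}(u_i)$, and the inequality always points the other way.

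The paper's fix is to \emph{switch to the polar body}. Using $h_{\PiK}(u_i)=1/\rho_{\PicK}(u_i)$, the desired bound becomes $\vol_n(\PicK)\le 2^n\prod_i \rho_{\PicK}(u_i)=n!\,\vol_n(C)$, where $C=\conv\{\pm\rho_{\PicK}(u_i)u_i\}\subset\PicK$. Now the frame is chosen from inside $\PicK$, not $\PiK$: take the axes of a rectangular crosspolytope $C$ of \emph{maximum volume} inscribed in the origin-symmetric body $\PicK$. The inequality $\vol_n(\PicK)\le n!\,\vol_n(C)$ for such a maximal $C$ is a separate lemma (Lemma~\ref{lem-crosspolytop-in-K}), proved by induction on $n$ by slicing $\PicK$ along a diameter direction $v$, using Brunn--Minkowski to bound $\vol_n(\PicK)\le(\diam\PicK)\,\vol_{n-1}(\PicK\cap v^\perp)$, and applying the inductive hypothesis to $\PicK\cap v^\perp$. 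This maximal-inscribed-crosspolytope lemma is the missing idea in your attempt; once you have it, no polytope approximation, no Theorem~\ref{thm-facets}, and no ``last coordinate'' patch are needed.
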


In a previous arXiv-version of the present paper we asked for the asymptotic behavior
of $\Lambda(n)$ as $n\rightarrow \infty$. Subsequently, the authors of \cite{KSZ-16} showed
that the correct order is
$$n^{-n/2},$$
which, in particular, implies that simplices do not provide the worst case.
Their proof relies heavily on previous work in \cite{ball-91a} and \cite{ball-91b}.

Theorems \ref{Planarcase}, \ref{thm-simplex} and \ref{thm-lower bound} will be proved
in Section \ref{sec-evaluating}.
In Section \ref{sec-algorithms} we will deal with various algorithmic issues related to
reverse Loomis--Whitney inequalities and give a tractability result in fixed dimension.
More precisely, we show that there is a pseudopolynomial algorithm for approximating
$\Lambda(P)$ whose running time depends on the binary size of the input, the
error and a lower bound on the isoperimetric ratio $\iso(P)$ of $P$.

\begin{thm}\label{thm-poly-time-fixed-dim}
Let the dimension $n$ be fixed.
For a given rational polytope $P$, a given $\nu \in \Q$ with $0<\nu \le \iso(P)$,
and a given bound $\delta \in (0,1) \cap \Q$ we can compute a number $\Lambda$ such that
$$
\Lambda \le \Lambda(P) \le (1+\delta) \Lambda
$$
in time that is polynomial in $\size(P)$, $1/\delta$ and $1/\nu$.
\end{thm}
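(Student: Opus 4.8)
The plan is to reduce the computation of $\Lambda(P)=\vol_n(P)^{n-1}/\Psi(P)$ to minimising the explicit function $\Phi(F):=\prod_{i=1}^n h_{\PiP}(u_i)$ over frames $F=\{u_1,\dots,u_n\}\in\CF^n$ — by (\ref{projbody}) and the definition of $\Psi$ one has $\Psi(P)=\min_{F}\Phi(F)$ — and to solve the latter by exhaustive search over a sufficiently fine rational grid in the compact set $\CF^n$, which is a manifold of dimension $\binom{n}{2}$. As preprocessing, and using that $n$ is fixed, we compute from $P$ in time polynomial in $\size(P)$, by standard polyhedral algorithms, a vertex and facet description of $P$, the rational number $\vol_n(P)$, and, for each facet $F_j$ with integral outer normal $a_j$ and supporting value $b_j$, the rational vector
$$
w_j\;:=\;\vol_{n-1}(F_j)\,\frac{a_j}{\|a_j\|}\;=\;\frac{n\,\vol_n\bigl(\conv(F_j\cup\{v\})\bigr)}{|a_j^{T}v-b_j|}\,a_j ,
$$
where $v$ is any vertex of $P$ off $\aff F_j$; all these data have bit-size polynomial in $\size(P)$. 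By (\ref{eq-zonotope}) then $\vol_{n-1}(P|u^\perp)=h_{\PiP}(u)=\tfrac12\sum_j|w_j^{T}u|$, so $\Phi$ is evaluated exactly, by polynomially many rational operations on numbers of polynomial bit-size, at every rational frame; also $\area(P)=\sum_j\vol_{n-1}(F_j)=\sum_j\|w_j\|$.

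The quantitative core is a bound on the condition number
$$
\chi(P)\;:=\;\frac{\max_{u\in\S^{n-1}}\vol_{n-1}(P|u^\perp)}{\min_{u\in\S^{n-1}}\vol_{n-1}(P|u^\perp)} .
$$
For the numerator, $\vol_{n-1}(P|u^\perp)=\tfrac12\sum_j|w_j^{T}u|\le\tfrac12\sum_j\|w_j\|=\tfrac12\area(P)$. For the denominator, let $\rho$ be the inradius of $P$: an inball of radius $\rho$ projects onto an $(n-1)$-dimensional ball of radius $\rho$, so $\vol_{n-1}(P|u^\perp)\ge\kappa_{n-1}\rho^{\,n-1}$ for every $u$, where $\kappa_{n-1}$ denotes the volume of the unit $(n-1)$-ball; and the well-known inequality $\vol_n(P)\le\rho\,\area(P)$ (e.g.\ via inner parallel bodies, using monotonicity of surface area under inclusion of convex bodies) gives $\rho\ge\vol_n(P)/\area(P)$. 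Combining, $\min_u\vol_{n-1}(P|u^\perp)\ge\kappa_{n-1}\bigl(\vol_n(P)/\area(P)\bigr)^{n-1}$, whence
$$
\chi(P)\;\le\;\frac{\area(P)^{\,n}}{2\kappa_{n-1}\,\vol_n(P)^{\,n-1}}\;=\;\frac{1}{2\kappa_{n-1}\,\iso(P)}\;\le\;\frac{1}{2\kappa_{n-1}\,\nu} .
$$

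To finish, recall that $h_{\PiP}$ is Lipschitz on $\R^n$ with constant the circumradius $R(\PiP)=\max_uh_{\PiP}(u)$, so if $\max_i\|u_i-u_i'\|\le\varepsilon$ then each factor $h_{\PiP}(u_i)$ changes by a relative amount at most $\varepsilon\,R(\PiP)/\min_u h_{\PiP}(u)=\varepsilon\,\chi(P)$, and thus $\Phi(\{u_i'\})\le(1+\varepsilon\chi(P))^{n}\,\Phi(\{u_i\})$. Choosing $\varepsilon:=\kappa_{n-1}\nu\delta/n$ makes $(1+\varepsilon\chi(P))^{n}\le1+\delta$. We build an explicit rational $\varepsilon$-net $\mathcal N$ of $\CF^n$ (for instance, rational orthogonal frames via Cayley transforms of rational skew-symmetric matrices, or rational near-orthonormal $n$-tuples) with members of bit-size $O(\log(1/\varepsilon))$; since $\dim\CF^n=\binom{n}{2}$ is a constant, $|\mathcal N|=(C_n/\varepsilon)^{\binom{n}{2}}$ is polynomial in $1/\varepsilon$, hence in $1/\delta$ and $1/\nu$, and computing $\mathcal N$ together with all values $\Phi(F)$, $F\in\mathcal N$, takes polynomial time. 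Put $\widetilde\Psi:=\min_{F\in\mathcal N}\Phi(F)$. A best frame $F^{*}$ for $P$ exists by compactness, and $\mathcal N$ contains some $F$ with $\max_i\|u_i-u_i^{*}\|\le\varepsilon$, so $\widetilde\Psi\le\Phi(F)\le(1+\delta)\Phi(F^{*})=(1+\delta)\Psi(P)$; conversely $\Phi(F)\ge\Psi(P)$ for every frame, so, using honest rational orthogonal frames, $\widetilde\Psi\ge\Psi(P)$. Hence $\Lambda:=\vol_n(P)^{n-1}/\widetilde\Psi$ satisfies $\Lambda\le\Lambda(P)\le(1+\delta)\Lambda$, and the whole computation runs in time polynomial in $\size(P)$, $1/\delta$ and $1/\nu$. (If one prefers near-frames, $\widetilde\Psi$ may slip slightly below $\Psi(P)$; this is repaired by running the construction with $\delta/3$ in place of $\delta$ and a cosmetic rescaling of the output.)

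The only genuinely substantive step, I expect, is the condition-number estimate $\chi(P)\le 1/(2\kappa_{n-1}\iso(P))$. The naive lower bound $\min_u\vol_{n-1}(P|u^\perp)\ge\vol_n(P)/\diam(P)$ does not suffice, because $\diam(P)$ is not controlled by $\vol_n(P)$ and $\area(P)$ in a scale-invariant way (thin needles and flat pancakes show this), so one really needs the inradius in order to make the isoperimetric ratio appear; this is exactly where the hypothesis $\nu\le\iso(P)$ enters, namely to bound the condition number and hence the grid resolution. Everything else is routine bookkeeping: that all polytope data and all grid arithmetic remain of size polynomial in $\size(P)$ in fixed dimension, and that near-frames perturb the approximation factor only controllably. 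One could lower the exponent of $|\mathcal N|$ from $\binom{n}{2}$ to $\binom{n-1}{2}$ by invoking Theorem~\ref{thm-facets}: for each of the at most $\size(P)^{O(1)}$ ordered $(n-1)$-tuples of facets one would grid only those frames whose first $n-1$ vectors are parallel to the chosen facets — but this refinement is not needed for the stated polynomial-time bound.
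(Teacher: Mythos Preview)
Your argument is correct, and it is a genuinely different route from the paper's. The paper first rescales so that $1\le \area(P)\le 1+1/n$, then uses the structural Theorem~\ref{thm-facets} together with a grid (Lemmas~\ref{lem-error}, \ref{lem-sampling}) to obtain an \emph{additive} approximation $\psi$ with $\Psi(P)\le\psi\le\Psi(P)+\tau$, and finally converts this into the desired multiplicative bound by invoking the Loomis--Whitney inequality~(\ref{LoomisWhitneyineq}) in the form $\Psi(P)\ge\vol_n(P)^{n-1}=\iso(P)\,\area(P)^n\ge\nu$. You bypass both ingredients: instead of Loomis--Whitney you control the \emph{relative} sensitivity of each factor $h_{\PiP}(u_i)$ via the condition number $\chi(P)=R(\PiP)/r(\PiP)$, which you bound by $1/(2\kappa_{n-1}\iso(P))$ using $\vol_n(P)\le\rho\,\area(P)$ and the inball projection; and instead of Theorem~\ref{thm-facets} you simply grid all of $\CF^n$, which in fixed dimension already has constant dimension $\binom{n}{2}$. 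What the paper gains is that the structural result reduces the continuous search dimension from $\binom{n}{2}$ to $\binom{n-1}{2}$ (at the price of $O(m^{n-1})$ discrete choices), tying the algorithm to the geometry of $P$; what your approach gains is self-containedness---no appeal to Loomis--Whitney, no reliance on Theorem~\ref{thm-facets}---and a transparent reason why $\iso(P)$ enters, namely as the condition number of the projection body. The one place you are a bit quick is the explicit construction of a rational $\varepsilon$-net of $\CF^n$ of the stated size and bit-complexity (the Cayley chart degenerates near rotations with eigenvalue $-1$), but in fixed dimension this is routine to fix by using finitely many rational rotates of the chart, or, as you note, by working with near-frames and absorbing the resulting two-sided slack into a constant-factor cushion on $\delta$.
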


Of course, the dependence on $\iso(P)$ can be dropped in Theorem \ref{thm-poly-time-fixed-dim}
if we restrict the polytopes to any class for which there exists a function $f(n,m)$ depending
only on $n$ and, for each fixed $n$, only polynomially on the number $m$ of facets, such that
$\iso (P)\ge f(n,m)$. We do not know whether there is an algorithm that only depends on
$\size(P)$ and $1/\delta$. In any case, we presume that, in Theorem \ref{thm-poly-time-fixed-dim},
the condition that $n$ is fixed cannot be lifted. Actually, we conjecture that the problem
\begin{quote}
Given a rational polytope $P$ (presented by its vertices or facet halfspaces) and a precision $p$,
approximate $\Lambda(P)$ up to $p$ binary digits
\end{quote}
is $\#\P$-hard; see Conjecture \ref{conj-hardness}.

In the final Section \ref{sec-functionals} we present various results on functionals
related to $\Lambda$.
In particular, we consider the functional $\Phi$ which associates with $K\in \CK^n$ the
volume ratio of $K$ and a rectangular box of minimal volume containing $K$.
The problem of finding a rectangular box of minimal volume enclosing
a body has been studied extensively for $n=2,3$ in computational geometry, as it appears
in various practical tasks. In particular, applications are reported in
robot grasping \cite{HRK}, industrial packing and optimal design \cite{CT},
hierarchical partitioning of sets of points or set theoretic estimation \cite{BHP}, \cite{Com},
and state estimation of dynamic systems \cite{MV}.

\section{Projection bodies and best frames for polytopes}\label{sec-face-structure}

We begin by observing that the best frames for a given $K\in \CK^n$ are related to
volume minimal rectangular boxes containing the projection body $\PiK$ of $K$.

\begin{lem}\label{rem-frame-box}
Let $K\in \CK^n$, let $B$ denote a rectangular box of smallest volume containing $\PiK$,
and let $\pm u_1, \ldots, \pm u_n$ be its outward unit normals.
Then  $F=\{u_1, \ldots, u_n\}$ constitutes a best frame for $K$, and vice versa.
\end{lem}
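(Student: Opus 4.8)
The plan is to convert the statement about minimal enclosing boxes of $\PiK$ into a statement about the projection average $\Psi(K;\cdot)$, and then to minimize over frames. The key point is that a rectangular box is determined, up to translation and up to shrinking, by the frame $\{u_1,\dots,u_n\}$ giving the directions of its edges, together with those edge lengths.

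First I would fix a frame $F=\{u_1,\dots,u_n\}\in\CF^n$ and identify the box of smallest volume with edges parallel to the $u_i$ that contains $\PiK$. In general, the smallest box with facet normals $\pm u_1,\dots,\pm u_n$ containing a convex body $C$ is $\{x\in\R^n : -h_C(-u_i)\le u_i^Tx\le h_C(u_i),\ i\in[n]\}$, whose $i$th edge has length $h_C(u_i)+h_C(-u_i)$. Since $\PiK$ is origin symmetric, for $C=\PiK$ this box is centred at the origin, its $i$th edge has length $2h_{\PiK}(u_i)$, and hence, by (\ref{projbody}), its volume equals
$$
2^n\prod_{i=1}^n h_{\PiK}(u_i)=2^n\prod_{i=1}^n\vol_{n-1}(K|u_i^\perp)=2^n\,\Psi(K;F).
$$
(Here $\PiK$ is full-dimensional, so these are genuine $n$-dimensional boxes of positive volume.)

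Next I would run the optimization over $\CF^n$. Every rectangular box $B$ has a well-defined frame $F$ of edge directions; if $B\supseteq\PiK$, then $B$ contains the smallest box with the same facet normals containing $\PiK$, so $\vol_n(B)\ge 2^n\Psi(K;F)\ge 2^n\Psi(K)$. Thus the minimal volume of a box containing $\PiK$ is $2^n\Psi(K)$, and this minimum is attained since $\Psi(K)$ is. If $B$ is such a minimal box, with frame $F$, then $\vol_n(B)=2^n\Psi(K;F)$ (any slack would allow us to shrink $B$) and $\vol_n(B)=2^n\Psi(K)$, so $\Psi(K;F)=\Psi(K)$ and $F$ is a best frame. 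Conversely, if $F$ is a best frame, then $\{x:|u_i^Tx|\le h_{\PiK}(u_i),\ i\in[n]\}$ contains $\PiK$ and has volume $2^n\Psi(K)$, hence is of smallest volume. This gives the asserted correspondence.

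I expect the only delicate part to be the bookkeeping in the first step: writing the minimal enclosing box precisely through support functions and using the origin symmetry of $\PiK$ to conclude that the volume-optimal box with prescribed edge directions may be taken centred at the origin. Once that is set up, the lemma is a one-line minimization of $\Psi(K;F)$ over $F\in\CF^n$.
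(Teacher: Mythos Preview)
Your proposal is correct and follows essentially the same approach as the paper: for a fixed frame $F$, use the origin symmetry of $\PiK$ and the identity $h_{\PiK}(u_i)=\vol_{n-1}(K|u_i^\perp)$ to compute the volume of the tightest box with facet normals $\pm u_i$ as $2^n\Psi(K;F)$, then minimize over $F$. Your write-up is slightly more explicit in handling the two directions of the correspondence, but the argument is the same.
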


\begin{proof}
Let $F=\{ v_1, \ldots, v_n\}$ be a frame and let $B$ be the rectangular box of minimal volume containing $\PiK$ whose outward unit normals are $\pm v_1, \ldots, \pm v_n$. Then each facet of $B$ contains a point of $\PiK$. Moreover, $\PiK$ is origin symmetric and consequently $B$ is origin symmetric, too.
Hence
$$
\vol_n(B)= \prod_{i=1}^n \bigl(2h_B(v_i)\bigr)
= 2^n\prod_{i=1}^n h_{\PiK}(v_i) = 2^n\prod_{i=1}^n \vol_{n-1}(K|v_i^\perp)=2^n\Psi (K;F).
$$
Thus any smallest box containing $\PiK$ induces a best frame for $K$ and vice versa.
\end{proof}

As Lemma \ref{rem-frame-box} shows, the problem of determining $\Lambda(K)$ is indeed
a special instance of a {\em containment problem}; see \cite{gritzmann-klee-94a}
for a survey on general containment problems.
Actually, by Lemma \ref{rem-frame-box}, $\Lambda (K)$ is closely related to the functional
$$
       \Phi(K)=\max_{F\in \CF^n}\frac{\vol_n(K)}{\vol_n(B(K;F))},
$$
where $B(K;F)$ is the rectangular box of minimal volume containing $K$ with edges
parallel to the vectors in $F$.

Note that, in the plane, $\Lambda(K)$ is the ratio of $\vol_2(K)$ and the area of the minimal
area rectangle circumscribed about $K$, i.e., for $K\in \CK^2$ we have
$\Lambda (K) = \Phi (K)$. In general, the two functionals are related
through the {\em Petty functional}
\cite{Petty71}
$$
\Theta(K)=\frac{\vol_n(K)^{n-1}}{\vol_n(\PiK)};
$$
more precisely,
$$
   \Lambda (K) = 2^n\Phi (\PiK) \, \Theta(K).
$$

While the functional $\Theta$ has attracted much interest in the theory of convex bodies,
the determination of its extreme values is still open (see  \cite[pp. 375, 380]{Gardner} and \cite[pp. 577--578]{Schneider}). 
In particular, it was conjectured by Petty that
$\Theta$ attains its maximum if and only if $K$ is an ellipsoid. On the other hand, Brannen
\cite{brannen-96} conjectured that the minimum is attained if $K$ is a simplex.

As $\Lambda (K) = \Phi (K)$ for $K\in \CK^2$, if $K$ is a polygon, the following result reduces the computation of $\Lambda(K)$ to a comparison of finitely many values.

\begin{prop}[Freeman and Shapira \cite{FreemanShapira75}]\label{FreemanShapira}
Let $P$ be a planar convex polygon. Then every rectangle of minimal area containing
$P$ has at least one edge parallel to an edge of $P$.
\end{prop}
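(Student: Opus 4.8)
The plan is to argue by a local deformation/continuity argument on the family of rectangles circumscribed about $P$. Parametrize circumscribed rectangles by the angle $\theta\in[0,\pi/2)$ that one pair of edges makes with a fixed reference direction; for each $\theta$ there is a unique minimal-area rectangle with that orientation, whose side lengths are the widths of $P$ in the directions $\theta$ and $\theta+\pi/2$, i.e.\ $w_P(\theta)=h_P(u_\theta)+h_P(-u_\theta)$ and $w_P(\theta+\pi/2)$, where $u_\theta=(\cos\theta,\sin\theta)$. Thus the area of the minimal circumscribed rectangle of orientation $\theta$ is the smooth-except-at-finitely-many-points function $A(\theta)=w_P(\theta)\,w_P(\theta+\pi/2)$, and what we must show is that $A$ attains its minimum over $[0,\pi/2)$ at a $\theta$ for which one of the four supporting lines of the rectangle contains an edge (not just a vertex) of $P$.

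First I would record the elementary fact that, since $P$ is a polygon with finitely many edges, the support function $h_P(u_\theta)$ is piecewise of the form $v_k\cdot u_\theta$ (where $v_k$ ranges over the vertices of $P$), so $w_P(\theta)$ is piecewise sinusoidal: on each interval between consecutive ``vertex-switching'' angles it equals $c\cos(\theta-\phi)$ for constants depending on the pair of vertices currently realizing the width. Consequently $A(\theta)$ is, on each such interval, an analytic function — in fact a trigonometric polynomial of degree at most $2$ in $\theta$ — and the breakpoints of the piecewise description are exactly the angles $\theta$ at which one of the four edges of the rectangle becomes parallel to an edge of $P$ (equivalently, the supporting line at that orientation flushes against a whole edge rather than touching at a single vertex). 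The next step is to analyze the interior of such an interval: there $A(\theta)=w_P(\theta)w_P(\theta+\pi/2)$ with each factor of the form $r_i\cos(\theta-\psi_i)$. Computing $A'(\theta)$ and setting it to zero, one finds that an interior critical point forces a rigid relation between the two ``width directions'' and the two fixed segments realizing them; the key computation is to check that this relation makes $A$ \emph{monotone} on the interval unless it is constant, or more precisely that any interior stationary point is a \emph{local maximum} of $A$ restricted to that interval (because the product of two cosine bumps, under the constraint that moving $\theta$ one way lengthens one bump while shortening the other, has no interior minimum). Hence on the closure of each interval the minimum of $A$ is attained at an endpoint, i.e.\ at a breakpoint — which is precisely an orientation in which a rectangle edge is parallel to an edge of $P$. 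Since the global minimum of $A$ over $[0,\pi/2)$ is the minimum over the finitely many intervals of the minima over their closures, it is attained at such a breakpoint, proving the claim.

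The main obstacle I anticipate is the second-derivative / monotonicity analysis inside a single interval: one must show cleanly that $A(\theta)=r_1 r_2\cos(\theta-\psi_1)\cos(\theta-\psi_2+\pi/2)=-r_1r_2\cos(\theta-\psi_1)\sin(\theta-\psi_2)$ cannot have an interior local minimum on the relevant subinterval. Writing $A(\theta)=-\tfrac12 r_1 r_2\big(\sin(2\theta-\psi_1-\psi_2)-\sin(\psi_1-\psi_2)\big)$ shows $A$ is, up to an additive constant, a single sinusoid of frequency $2$, so on an interval of length $<\pi/2$ (and the vertex-switching intervals are short) it is either monotone or has a single interior extremum which is necessarily a maximum when it lies above the endpoint values — the sign bookkeeping here, together with correctly identifying the endpoints of each interval as ``edge-flush'' orientations, is the delicate part; everything else is routine. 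An alternative, slicker route that avoids the derivative computation is to observe directly that between breakpoints the rectangle can be ``rotated'' continuously while keeping it circumscribed, and that the width function $w_P(\theta)$ restricted to a vertex-realized interval is \emph{concave} (being a cosine on a short arc), whence $A$ as a product of two such concave positive functions, suitably arranged, attains its minimum on the boundary of the interval; I would present whichever of these two arguments turns out to have the shortest sign analysis.
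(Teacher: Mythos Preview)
The paper does not prove this proposition; it is quoted from Freeman and Shapira and used as a black box, so there is no proof in the paper to compare against directly. However, the paper does prove the \emph{perimeter} analogue (Theorem~\ref{FreemanShapira1}) by exactly the strategy you outline: parametrize circumscribed rectangles by $\theta$, observe that the objective is piecewise a function $f$ with $f''=-f$ on each smooth piece, and conclude that no interior local minimum occurs. For perimeter this is immediate, because a \emph{sum} of two functions satisfying $f''=-f$ again satisfies it, so $\varphi''=-\varphi<0$ on every smooth piece.

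Your approach is the natural adaptation of that argument to the area functional, and it is correct. The one place you hesitate---the ``delicate sign bookkeeping'' showing any interior critical point of $A$ is a local maximum---can be settled cleanly. Writing, as you do, $A(\theta)=C+D\cos(2\theta+\phi)$ on a smooth piece with $D=\tfrac12 r_1r_2>0$, the product-to-sum identity gives
\[
C=\tfrac12 r_1r_2\cos\bigl(\psi_2-\psi_1-\tfrac{\pi}{2}\bigr),
\]
so $|C|\le D$. Hence the global minimum $C-D$ of this sinusoid is $\le 0$. Since $A(\theta)>0$ throughout the interval (it is the area of a circumscribed rectangle), the value $\cos(2\theta+\phi)=-1$ is never attained there; therefore any interior critical point must have $\cos(2\theta+\phi)=+1$, giving $A''=-4D<0$. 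Thus every interior critical point is a strict local maximum, $A$ is not constant on any smooth piece, and the minimum over each closed piece is at an endpoint, i.e.\ at an edge-flush orientation.

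Your alternative route via concavity of each width factor is less reliable: a product of positive concave functions need not attain its minimum at the boundary (think of $x(1-x)$ on $[0,1]$ versus on $[\tfrac14,\tfrac34]$), so the sinusoid computation with the observation $|C|\le D$ is the argument to present.
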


We now turn to dimensions $n\ge 3$. Even though results on minimal volume
enclosing rectangular boxes cannot be evoked directly anymore, we will see that
Lemma \ref{rem-frame-box} is still useful.
We first point out that in dimension three an analogous result to Proposition \ref{FreemanShapira} holds.

\begin{prop}[O'Rourke \cite{O'Rourke85}]\label{O'Rourke}
Let $P$ be a polytope in $\R^3$. Then every minimal volume rectangular box  containing $P$
has at least two adjacent faces both containing more than one point of $P$.
\end{prop}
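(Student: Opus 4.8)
The plan is to argue by contradiction, reducing the three--dimensional problem to a planar one so that Proposition~\ref{FreemanShapira} can be applied. First I would record that, since $B$ has minimal volume among boxes containing $P$, each of its six facets meets $P$: if the supporting hyperplane of $B$ perpendicular to some axis missed $P$, one could translate that facet inward, obtaining a box that still contains $P$ but has strictly smaller volume. Call a facet of $B$ \emph{large} if it meets $P$ in more than one point, and suppose, towards a contradiction, that no two adjacent facets of $B$ are both large. Two facets of a box are non--adjacent precisely when they are opposite, so the facet--adjacency graph is the octahedral graph $K_{2,2,2}$, whose three parts are the three pairs of opposite facets and whose independent sets are exactly the subsets of a single such pair. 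Hence, after relabelling the orthonormal frame $\{u_1,u_2,u_3\}$ with $\pm u_1,\pm u_2,\pm u_3$ the outward facet normals of $B$, every large facet is perpendicular to $u_1$; so the four facets of $B$ perpendicular to $u_2$ or to $u_3$ each meet $P$ in exactly one point, say $p_2^{+},p_2^{-},p_3^{+},p_3^{-}$.

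Next I would project along $u_1$. Put $Q=P|u_1^{\perp}$, a planar convex polygon, and let $R=B|u_1^{\perp}$ be the rectangle in $u_1^{\perp}$ cut out by the four facets of $B$ perpendicular to $u_2$ and $u_3$. The key claim is that $R$ has minimal area among \emph{all} rectangles in the plane $u_1^{\perp}$ that contain $Q$. Indeed, for any such rectangle $R'$ the prism $R'\times I$, where $I$ is the extent of $P$ in the direction $u_1$, is a box that contains $P$ and has volume $\vol_2(R')\,|I|$; and since the two facets of $B$ perpendicular to $u_1$ touch $P$, the $u_1$--extent of $B$ is also $|I|$, so $\vol_3(B)=\vol_2(R)\,|I|$. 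Minimality of $B$ then forces $\vol_2(R)\le\vol_2(R')$.

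Now Proposition~\ref{FreemanShapira}, applied to the polygon $Q$ and its minimal--area enclosing rectangle $R$, yields a side of $R$ parallel to some edge $e$ of $Q$. Since the only supporting lines of $Q$ parallel to $e$ are $\aff(e)$ and one further line, that side or its opposite side equals $\aff(e)$; in particular some side of $R$ contains the whole segment $e$ and hence meets $Q$ in more than one point. On the other hand, the side of $R$ with outward normal $u_2$ meets $Q$ exactly in the set of $q\in Q$ with $\langle q,u_2\rangle=h_Q(u_2)$, and $h_Q(u_2)=h_P(u_2)$ since $u_2\perp u_1$; these $q$ are precisely the orthogonal projections onto $u_1^{\perp}$ of the $x\in P$ with $\langle x,u_2\rangle=h_P(u_2)$, i.e., the single point $p_2^{+}|u_1^{\perp}$. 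The same holds for the other three sides, so every side of $R$ meets $Q$ in exactly one point --- a contradiction. Undoing the relabelling, $B$ has two adjacent facets each meeting $P$ in more than one point, as claimed.

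The step I expect to be the main obstacle is the reduction to the plane, specifically the verification that fixing the axis $u_1$ genuinely decouples that dimension of the box, so that \emph{global} minimality of $B$ descends to \emph{global} minimality of the cross--sectional rectangle $R$. This is what allows Proposition~\ref{FreemanShapira} to be invoked: a merely locally minimal enclosing rectangle of a polygon need not be flush with any edge, so without the global statement the argument would fail. By comparison, the combinatorial reduction to a single privileged direction $u_1$ and the elementary step upgrading ``parallel to an edge of $Q$'' to ``containing an edge of $Q$'' are routine.
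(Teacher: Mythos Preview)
Your proof is correct. The paper itself does not prove this proposition---it is quoted from O'Rourke---but the paper's proof of the $n$-dimensional generalization, Theorem~\ref{GeneralizedO'Rourke}, specializes in dimension~$3$ to exactly your argument: one observes that the ``bad'' assumption forces the four facets orthogonal to two of the box axes to touch $P$ in single points, projects onto the plane spanned by those two axes (your $u_1^{\perp}$), notes that global minimality of $B$ descends to global minimality of the cross-sectional rectangle, and invokes Proposition~\ref{FreemanShapira} for the contradiction.
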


Next, we prove a structural result in arbitrary dimensions.

\begin{thm}\label{GeneralizedO'Rourke}
Let $P$ be a convex polytope in $\R^n$, and let $B$ be a rectangular box of minimal volume
containing $P$.
Then $B$ has at most one pair of opposite facets whose intersections with $P$
are just a single point each.
\end{thm}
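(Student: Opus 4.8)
The plan is to argue by contradiction using an infinitesimal rotation of the frame, exploiting that a single‑point contact set forces the relevant support function to be locally linear. First I would choose coordinates so that $B=\prod_{i=1}^{n}[a_i,b_i]$, its facets thus having outer normals $\pm e_1,\dots,\pm e_n$; write $P_i^{-}=P\cap\{x_i=a_i\}$ and $P_i^{+}=P\cap\{x_i=b_i\}$. By minimality of $B$ every facet of $B$ meets $P$, so all these sets are nonempty. Suppose, contrary to the claim, that for two distinct indices, say $i=1$ and $i=2$, all four sets $P_1^{-},P_1^{+},P_2^{-},P_2^{+}$ are single points, say $\{p\},\{q\},\{r\},\{s\}$ respectively. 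The goal is to rotate the frame slightly inside the $x_1x_2$–plane and show that the smallest circumscribed box with the rotated frame has strictly smaller volume, contradicting minimality.

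For $\theta$ near $0$ set $u_1(\theta)=\cos\theta\, e_1+\sin\theta\, e_2$ and $u_2(\theta)=-\sin\theta\, e_1+\cos\theta\, e_2$, and let $B(\theta)$ be the rectangular box of minimal volume containing $P$ with edges parallel to $\{u_1(\theta),u_2(\theta),e_3,\dots,e_n\}$; its edge lengths are the widths $w_i(\theta)$ of $P$ in these directions, and $B(0)=B$. Since $q$ is the \emph{unique} point of $P$ maximizing $x_1$, the vector $e_1$ lies in the interior of the full‑dimensional normal cone of $P$ at the vertex $q$, hence $u_1(\theta)$ stays in that cone for all small $\theta$ and $h_P(u_1(\theta))=\langle q,u_1(\theta)\rangle$. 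Arguing the same way for $-e_1$ at $p$, for $e_2$ at $s$, and for $-e_2$ at $r$, one gets, for small $\theta$,
$$
w_1(\theta)=\langle q-p,\,u_1(\theta)\rangle,\qquad w_2(\theta)=\langle s-r,\,u_2(\theta)\rangle,
$$
while the widths in the directions $e_3,\dots,e_n$ are unaffected by a rotation in the $x_1x_2$–plane. Hence $\vol_n(B(\theta))=c\,g(\theta)$, where $c=\prod_{i\ge 3}w_i(0)>0$ is constant and $g(\theta)=w_1(\theta)w_2(\theta)$, and minimality of $B$ forces $\theta=0$ to be a local minimum of $g$.

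Now write $q-p=(\alpha_1,\alpha_2,\dots)$ and $s-r=(\mu_1,\mu_2,\dots)$, so that $\alpha_1=b_1-a_1>0$ and $\mu_2=b_2-a_2>0$. A direct expansion gives, for small $\theta$,
$$
g(\theta)-g(0)=-(\alpha_1\mu_2+\alpha_2\mu_1)\sin^2\theta+(\alpha_2\mu_2-\alpha_1\mu_1)\sin\theta\cos\theta .
$$
If $\alpha_2\mu_2\neq\alpha_1\mu_1$, the right‑hand side is $(\alpha_2\mu_2-\alpha_1\mu_1)\theta+O(\theta^2)$ and changes sign at $0$, so $g(\theta)<g(0)$ for small $\theta$ of a suitable sign — contradicting that $0$ is a local minimum. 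If $\alpha_2\mu_2=\alpha_1\mu_1$, then since $\alpha_1,\mu_2>0$ this relation forces $\alpha_2\mu_1=\alpha_2^{\,2}\mu_2/\alpha_1\ge 0$, whence $\alpha_1\mu_2+\alpha_2\mu_1\ge\alpha_1\mu_2>0$ and $g(\theta)-g(0)=-(\alpha_1\mu_2+\alpha_2\mu_1)\sin^2\theta<0$ for all small $\theta\neq 0$ — again a contradiction. This proves the theorem. The only delicate point is the reduction to the linear formulas for $w_1$ and $w_2$: one must justify that a singleton contact corresponds to a vertex whose normal cone is full‑dimensional and contains the relevant coordinate axis in its interior, so that both support functions are locally linear in $\theta$; granting this, the remaining computation is elementary.
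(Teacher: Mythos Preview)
Your argument is correct and follows the same basic strategy as the paper: assume two ``bad'' pairs of facets, rotate the frame inside the plane $\lin\{e_1,e_2\}$ while keeping $e_3,\dots,e_n$ fixed, and derive a contradiction with minimality. The justification you flag at the end is standard: if $P\cap\{x_1=b_1\}=\{q\}$, then $\{q\}$ is a $0$-dimensional face of the polytope $P$, hence a vertex, and $e_1$ lies in the interior of its (full-dimensional) normal cone precisely because $q$ is the \emph{unique} maximizer of $\langle e_1,\cdot\rangle$ on $P$.

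The difference from the paper is one of packaging. The paper observes that the volume of the rotated box factors as a constant times the area of the minimal rectangle enclosing the planar projection $P|\lin\{e_1,e_2\}$, and then invokes the Freeman--Shapira theorem (Proposition~\ref{FreemanShapira}) to conclude that this rectangle must share an edge direction with the polygon $P|\lin\{e_1,e_2\}$, contradicting the single-point contacts. You instead carry out the first-- and second--order analysis of $g(\theta)=w_1(\theta)w_2(\theta)$ directly. Your route is self-contained and makes the variational mechanism explicit; the paper's route is shorter and shows that the $n$-dimensional statement is really a corollary of the $2$-dimensional one, which in turn feeds into the proof of Theorem~\ref{thm-facets} via projection bodies.
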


\begin{proof}
Let $B$ be a rectangular box of minimal volume containing $P$, and let $\pm v_1,\dots,\pm v_n$
be the outward unit normals of the facets of $B$.
Suppose that there are two pairs of opposite facets containing just one point of $P$ each,
and let $\pm v_1$ and $\pm v_2$ be their normals. Among all frames in $\R^n$, we focus
on those which contain $v_3,\ldots,v_n$, and have the remaining
two in the plane $V=\lin \{v_1,v_2\}$ spanned by $v_1$ and $v_2$.
All such frames are related to each other through a rotation in $V$, i.e., a rotation with
axis $W=\lin \{v_3,\ldots,v_n\}$.

In order to minimize the volume of the bounding box associated to each such frame, we
have to minimize the product of the widths of $P$ in two orthogonal directions of $V$.
Therefore, we need to minimize the area of the rectangle bounding $P|V$.
By Proposition \ref{FreemanShapira}, the minimum is attained by a rectangle which contains
an edge of $P|V$ in its boundary. So $B$ cannot have two pairs of opposite facets
each containing a unique point of $P$.
\end{proof}

We can now prove Theorem \ref{thm-facets}.

\begin{proof}[Proof of Theorem \ref{thm-facets}]
By Lemma \ref{rem-frame-box}, every best frame of $P$ corresponds to a minimal volume rectangular box $B$ containing $\PiP$.
Since $\PiP$ is origin symmetric, Theorem \ref{GeneralizedO'Rourke} guarantees that
each facet of $B$, except possibly one pair of opposite facets, contains at least two points
of $\PiP$ and consequently contains a face of $\PiP$ of dimension at least one.
Since $\PiP$ is a zonotope, each of its
faces is the Minkowski sum of some of its generating segments. In particular,
this means that each facet of $B$, except possibly one pair of opposite facets,
is parallel to at least one segment generating $\PiP$. Therefore each vector
of a best frame, with possibly one exception, is parallel to at least one facet of $P$.
\end{proof}

\section{Bounds on $\Lambda$}\label{sec-evaluating}

The computation of $\lambda(2)$ is based on the results of
Section \ref{sec-face-structure}, more precisely on Proposition \ref{FreemanShapira} and
Lemma \ref{rem-frame-box}.

\begin{proof}[Proof of Theorem \ref{Planarcase}]
In order to prove the inequality, it is enough to show that, for every planar convex body $K$,
there exists a frame $\{u_1, u_2\}$ such that the area of the
rectangle circumscribed about $K$ with edges parallel to $u_1$ and $u_2$ is not larger
than twice the area of $K$.

Let $a,b\in K$ be such that $\|a-b\|$ is the diameter $\diam K$ of $K$.
Further, let $u_1=(a-b)/\|a-b\|$ and let $u_2\in \S^1$ be perpendicular to $u_1$.
Then the lines $a+\lin\{u_2\}$ and $b+\lin\{u_2\}$ support $K$.
Let $c,d \in K$ be such that $c+\lin\{u_1\}$ and $d+\lin\{u_1\}$ are different supporting lines
to $K$. Then the four corresponding halfplanes define a rectangle $B$ circumscribed about $K$.
(Note that the four points $a$, $b$, $c$, $d$ need not all be distinct.)
Now set $P=\conv\{a, b, c, d\}$. Then we have
$$
\vol_2(B) = 2\vol_2(P)\leq 2\vol_2(K),
$$
with equality if and only if $K=P$.

Finally suppose that $\Lambda(K)=\frac12$. Then $K=P$ and $B$ is a rectangle of minimal
area containing $K$. Hence, by Proposition \ref{FreemanShapira}, one of the edges
of $B$ must contain two points of $\{a, b, c, d\}$. Since $a$ and $b$ are the
endpoints of a diameter of $K$, it follows that $c$ or $d$ belongs to the line segment $[a,b]$. Hence $K$ must be a triangle.
\end{proof}

The result $\lambda(2)=\frac12$ of Theorem \ref{Planarcase} deals with the worst case among
{\em all} planar convex bodies. It may, however, also be of interest to regard restricted classes.
A natural class to consider is $\CK_o^n$, i.e., that of origin symmetric convex bodies
and hence to define
$$
\lambda_o(n)= \inf_{K\in \CK_o^n} \Lambda (K).
$$
The following remark shows that already in the plane some caution has to be exercised since,
unlike for $\lambda(2)$, the infimum need no longer be attained.

\begin{lem}\label{rem-not-attained}
The number $\lambda_o(2)$ equals $\lambda(2)$, but the infimum is not attained.
\end{lem}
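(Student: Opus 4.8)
The plan is to prove the two assertions separately: first that $\lambda_o(2) = \lambda(2) = \tfrac12$, and then that no origin symmetric planar body achieves $\Lambda(K) = \tfrac12$.

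For the equality $\lambda_o(2)=\lambda(2)$: since $\CK_o^2 \subset \CK^2$, we immediately have $\lambda_o(2) \ge \lambda(2) = \tfrac12$ from Theorem \ref{Planarcase}. For the reverse inequality it suffices to exhibit a sequence $K_t \in \CK_o^2$ with $\Lambda(K_t) \to \tfrac12$. The natural candidates are origin symmetric hexagons that approximate the extremal triangle. Concretely, take the triangle $T$ with $\Lambda(T)=\tfrac12$ and form, for small $\varepsilon>0$, the body $K_\varepsilon = \conv\bigl((1-\varepsilon)T \cup -(1-\varepsilon)T\bigr)$ (a centrally symmetric hexagon), or more simply a hexagon with one pair of very short edges so that it is close to $T \cup (-T)$ but with most of its area near one copy. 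Actually the cleanest choice is to let $K_\varepsilon$ be the hexagon $\conv\{a,b,c,-\varepsilon a,-\varepsilon b,-\varepsilon c\}$ for vertices $a,b,c$ of a triangle; as $\varepsilon\to 0^+$ its area tends to $\vol_2(\conv\{a,b,c\})$, and one checks that for each frame the two projection lengths (widths) converge to those of the triangle. Hence $\Psi(K_\varepsilon;F) \to \Psi(T;F)$ uniformly in $F$, $\Psi(K_\varepsilon)\to\Psi(T)$ by compactness of $\CF^2$, and $\Lambda(K_\varepsilon) = \Phi(K_\varepsilon) \to \Phi(T) = \tfrac12$. This forces $\lambda_o(2)\le\tfrac12$.

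For the non-attainment: suppose $K \in \CK_o^2$ had $\Lambda(K) = \tfrac12$. Since $\Lambda = \Phi$ in the plane, this means the minimal area circumscribed rectangle $B$ of $K$ satisfies $\vol_2(B) = 2\vol_2(K)$. Retracing the proof of Theorem \ref{Planarcase}, equality in $\vol_2(B) = 2\vol_2(P) \le 2\vol_2(K)$ with $P = \conv\{a,b,c,d\}$ (where $a,b$ are endpoints of a diameter and $c,d$ the touching points on the other pair of sides) forces $K = P$, so $K$ is a triangle or quadrilateral, and moreover the argument via Proposition \ref{FreemanShapira} forces $c$ or $d$ to lie on $[a,b]$, making $K$ a triangle. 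But a triangle is not origin symmetric, a contradiction. Thus the infimum defining $\lambda_o(2)$ is not attained.

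The main obstacle is the non-attainment direction — specifically, making airtight the claim that $\Lambda(K) = \tfrac12$ for an origin symmetric $K$ forces $K$ to be a triangle. One must be careful that the proof of Theorem \ref{Planarcase} actually yields the full equality characterization (not merely the inequality) and that the degenerate configurations (the points $a,b,c,d$ not all distinct, or $B$ having an edge-parallel-to-$K$ in more than one way) are handled; the cleanest route is simply to quote the equality case of Theorem \ref{Planarcase}, which already states equality holds \emph{only} for triangles, and observe that $\CK_o^2$ contains no triangle. The approximation direction is routine once the right hexagonal family is chosen, the only minor care being the uniform convergence of widths, which follows from Hausdorff convergence $K_\varepsilon \to T$ together with continuity of $F \mapsto \Psi(K;F)$.
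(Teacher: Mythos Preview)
Your non-attainment argument is fine and matches the paper's: the equality case in Theorem~\ref{Planarcase} says $\Lambda(K)=\tfrac12$ forces $K$ to be a triangle, and triangles are not origin symmetric.

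The approximation direction, however, has a genuine gap. Your ``cleanest choice'' family $K_\varepsilon=\conv\{a,b,c,-\varepsilon a,-\varepsilon b,-\varepsilon c\}$ is \emph{not} origin symmetric when $\varepsilon\ne 1$: for instance, with $a,b,c$ the vertices of an equilateral triangle centred at the origin, the point $-a$ lies outside $K_\varepsilon$ for small $\varepsilon$. Your earlier suggestion $K_\varepsilon=\conv\bigl((1-\varepsilon)T\cup -(1-\varepsilon)T\bigr)$ \emph{is} origin symmetric, but it is just a dilate of $\conv(T\cup -T)$, so by scale invariance $\Lambda(K_\varepsilon)$ is constant in $\varepsilon$ and does not tend to $\tfrac12$. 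More fundamentally, the whole strategy of Hausdorff-approximating a triangle by origin symmetric bodies cannot work: if $K_\varepsilon\in\CK_o^2$ and $K_\varepsilon\to T$ in the Hausdorff metric, then $K_\varepsilon=-K_\varepsilon\to -T$ as well, forcing $T=-T$, which is impossible for a triangle. So you cannot reach $\tfrac12$ via continuity of $\Lambda$ on $\CK^2$.

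The paper instead takes a degenerating family of rhombi $R_l$ (diagonals of length $2$ and $2l$), which are origin symmetric, and computes $\Lambda(R_l)$ explicitly. Using Proposition~\ref{FreemanShapira} one sees that a best frame has one direction along an edge of $R_l$, and a short calculation gives $\Lambda(R_l)=(1+l^2)/2\to\tfrac12$ as $l\to 0$. The point is that these rhombi degenerate to a segment, not to a triangle, so the limit value $\tfrac12$ comes from a direct computation rather than from continuity at a full-dimensional limit body.
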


\begin{proof}
In view of Theorem \ref{Planarcase} and its characterization of the equality case
it suffices to construct a sequence of planar origin symmetric convex bodies
whose $\LW$-constant tends to $\frac12$.
For $l\in (0,1)$ let $R_l$ denote a rhombus whose diagonals have length $2$ and $2l$, respectively.
By Theorem \ref{FreemanShapira}, the best frame for $R_l$ has a direction parallel to one of
its edges. Hence, by elementary calculations we obtain
$$
\Lambda(R_l)=2l\left(\frac{4l}{1+l^2}\right)^{-1}=\frac{1+l^2}{2},
$$
which tends to $\frac12$ as $l\rightarrow 0$.
\end{proof}

The results for the planar case suggest to focus on simplices in $\R^n$. We now prove Theorem~\ref{thm-simplex}.

\begin{proof}[Proof of Theorem \ref{thm-simplex}]
We start with the proof of the upper bound. Let $F=\{u_1,\dots, u_n\}$ be any frame.
For $v\in \S^{n-1}$, let $l(v)$ denote the length of the longest chord in $S_n$ parallel to $v$.
Then
\begin{equation}\label{maxchord}
    \vol_n(S_n)=\frac{l(v)}{n}\, \vol_{n-1}(S_n|v^\perp).
\end{equation}
(By \cite{Martini}, simplices are the only sets which satisfy
this formula for all $v\in \S^{n-1}$.)  Note that $S_n$ contains the convex hull $Q$ of the chords of length $l(u_i)$ parallel to $u_i$, for all $i\in [n]$. Therefore,
\begin{equation}\label{SnQinequality}
\vol_{n}(S_n)\geq \vol_n(Q)\geq  \frac{\prod_{i=1}^{n}l(u_i)}{n!}.
\end{equation}
The latter inequality can be proved by induction on $n$ on noting that
$$n\vol_n(Q)\ge l(u_i)\vol_{n-1}(Q|u_i^{\perp}),$$
for each $i\in [n]$ (an easy consequence of an inequality by Rogers and Shephard \cite{RS}). By (\ref{maxchord}) and (\ref{SnQinequality}), we obtain
$$
\Lambda (S_n;F)=\frac{\vol_{n}(S_n)^{n-1}}{\prod_{i=1}^n\vol_{n-1}(S_n|u_i^\perp)}
       = \frac{\vol_n(S_n)^{n-1}\,\prod_{i=1}^{n}l(u_i)}
       {\vol_{n}(S_n)^nn^n}
       \leq \frac{n!}{n^{n}}
$$
and hence
$$
\Lambda (S_n) \leq \frac{(n-1)!}{n^{n-1}}.
$$

We claim that equality holds for $T_n$.  Denote by $a$ a vertex of $T_n$, by $T_{n-1}$ its opposite facet, by $v$ the outward unit normal to $T_{n-1}$ and by $h_a$ the distance from $a$ to $T_{n-1}$. The line through $a$ parallel to $v$ intersects $T_{n-1}$. Therefore,
$$
  \vol_{n-1}(T_n|v^\perp)=\vol_{n-1}(T_{n-1}).
$$
For every frame $F=\{v_1, \dots, v_n\}$ with $v_1=v$, we have
$$
    \vol_{n-1}(T_n|v_i^\perp)=\frac{h_a}{n-1}\, \vol_{n-2}(T_{n-1}|v_i^\perp), \ \textrm{for} \ i=2,\dots ,n.
$$
Clearly
$$
    \vol_{n}(T_n)=\frac{h_a}{n}\, \vol_{n-1}(T_{n-1}).
$$
Consequently,
$$
    \Lambda (T_n;F) =
    \frac{\vol_{n-1}(T_{n-1})^{n-1}\,h_a^{n-1}\,(n-1)^{n-1}}
    {n^{n-1}\,\vol_{n-1}(T_{n-1})\,h_a^{n-1}\,\prod_{i=2}^{n}\vol_{n-2}(T_{n-1}|v_i^\perp)}
    = \frac{(n-1)^{n-1}}{n^{n-1}}\, \Lambda (T_{n-1};F\setminus \{v_1\}).
$$
This implies
$$
      \Lambda (T_n) \geq \frac{(n-1)^{n-1}}{n^{n-1}}\, \Lambda (T_{n-1}),
$$
since $\Lambda(T_n) \geq \max_{\{F:v_1\in F\}} \Lambda(T_n;F)$. Applying this inequality recursively and using that $\Lambda(T_2)=\frac12$
proves the claim.

Next, we prove the lower bound for $S_n$. The argument is rather similar to the one for $T_n$.  

We denote by $a_i$, $i\in [n+1]$, the vertices of $S_n$, by $F_i$ the facet opposite to $a_i$ and 
by $h_i$ the distance from $a_i$ to $F_i$. Let $F_1$ be a facet of $S_n$ with largest $(n-1)$-dimensional 
volume, and let $v$ be its outward unit normal. We assume without loss of generality that $F_1 \subset v^\perp$.

We claim that $\vol_{n-1}(S_n|v^\perp)< 2\vol_{n-1}(F_1)$.
To see this, let $p_i$ denote the projection of $a_i$ onto $v^\perp$, $i\in [n+1]$ (so $p_i=a_i$, for $i\neq 1$). 
Then $S_n|v^\perp=\conv\{p_1,\dots,p_{n+1}\}$.
Let $L$ be the simplex in $v^\perp$ each of whose facets is parallel to a facet of $F_1$ and contains 
the vertex of $F_1$ opposite this facet. (Note that $L$ is, up to translation, equal to $-nF_1$.) 
Since $\vol_{n-1}(F_1)$ is maximal, it is at least as large as the $(n-1)$-dimensional volume of the 
projection of each facet $F_i$ onto $v^\perp$, and then as large as the $(n-1)$-dimensional volume 
of the convex hull of any $n$ of the $p_i$'s. It follows that the distance from $p_1$ to any facet 
$E$ of $F_1$ is less than the height of $F_1$ relative to $E$. Therefore, $p_1$ is contained in the 
slab determined (on $v^\perp$) by the two planes parallel to $E$, and at a distance equal to the height of $F_1$ relative to $E$. Repeating the argument for all facets of $F_1$ we deduce that $p_1\in L$.

Let $g(x)=\vol_{n-1}(\conv(F_1\cup \{x\}))$, $x\in v^{\perp}$. Then $g(p_1)=\vol_{n-1}(S_n|v^{\perp})$. 
Moreover, $g(x)$ is a convex function of $x$; this is a consequence of \cite[Theorem~10.4.1]{Schneider}, 
and follows easily from its specialization \cite[p.~543]{Schneider} with $K_0=F_1$ and $K_1=\{x\}$. 
Therefore the maximum value of $g$ is attained at some vertex $w$ of $L$. Notice that $w$ is on the 
boundary of each slab considered before. Moreover, if we assume $E$ is the facet of $F_1$ parallel 
to the facet of $L$ opposite to $w$, we see that $E$ cuts $\conv(F_1\cup \{w\})$ in two simplices of 
equal $(n-1)$-dimensional volume: Hence, $g(w)=2\vol_{n-1}(F_1)$. This maximum cannot be attained or 
else a facet of $S_n$ would have a projection onto $v^\perp$ with the same volume as $F_1$. 
Thus $g(p_1)<g(w)$ and this proves the claim.

Now, for every frame $F=\{v_1, \dots, v_n\}$ with $v_1=v$, we have
$$
    \vol_{n-1}(S_n|v_i^\perp)=\frac{h_1}{n-1}\, \vol_{n-2}(F_1|v_i^\perp), \ \textrm{for} \ i=2,\dots ,n,
$$
$$
    \vol_{n}(S_n)=\frac{h_1}{n}\, \vol_{n-1}(F_1).
$$
Therefore, we deduce
$$
    \Lambda (S_n;F) >
    \frac{\vol_{n-1}(F_1)^{n-1}\,h_1^{n-1}\,(n-1)^{n-1}}
    {2\, n^{n-1}\,\vol_{n-1}(F_1)\,h_1^{n-1}\,\prod_{i=2}^{n}\vol_{n-2}(F_1|v_i^\perp)}
    = \frac{(n-1)^{n-1}}{2\,n^{n-1}}\, \Lambda (F_1;F\setminus \{v_1\}).
$$
Since $\Lambda(S_n) \geq \max_{\{F:v_1\in F\}} \Lambda(S_n;F)$, we get
$$
      \Lambda (S_n) > \frac{(n-1)^{n-1}}{2\,n^{n-1}}\, \Lambda (F_1),
$$
which, applied recursively together with $\Lambda(S_2)=\frac12$, yields the assertion.
\end{proof}

We notice that the proof of Theorem \ref{thm-simplex} makes use of few properties of the regular simplex. Indeed, we observe that the inductive step holds whenever one altitude of a simplex falls into the opposite facet. Unfortunately, for $n\geq 3$, there exist simplices with no internal altitude.

\begin{cor}\label{cor-cor}
Let $S_n$ be a simplex. Then $\Lambda(S_n)=n!/n^n$ if and only if there exists a frame $F=\{u_1,\dots,u_n\}$ 
such that the convex hull of the maximal chords of $S_n$ parallel to $u_1,\dots,u_n$, respectively, is $S_n$ itself.
\end{cor}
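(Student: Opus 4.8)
The plan is to read the claim off the equality analysis of the upper bound in Theorem~\ref{thm-simplex}. In the proof of that bound one shows, for an arbitrary frame $F=\{u_1,\dots,u_n\}$ and with $Q_F=\conv(\sigma_1,\dots,\sigma_n)$ where $\sigma_i$ is a maximal chord of $S_n$ parallel to $u_i$, that
$$
\Lambda(S_n;F)=\frac{\prod_{i=1}^n l(u_i)}{n^n\,\vol_n(S_n)}
\qquad\text{and}\qquad
\vol_n(S_n)\ \ge\ \vol_n(Q_F)\ \ge\ \frac1{n!}\prod_{i=1}^n l(u_i),
$$
the middle inequality because $Q_F\subseteq S_n$ and the last from the Rogers--Shephard-type estimate $n\vol_n(Q_F)\ge l(u_i)\vol_{n-1}(Q_F|u_i^\perp)$ applied inductively. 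Since $\Lambda(S_n)=\max_F\Lambda(S_n;F)\le n!/n^n$, the identity $\Lambda(S_n)=n!/n^n$ holds precisely when some frame $F$ makes both displayed inequalities into equalities; and, $Q_F\subseteq S_n$ being a compact convex set and $\vol_n(S_n)>0$, equality $\vol_n(S_n)=\vol_n(Q_F)$ is the same as $Q_F=S_n$. So the ``only if'' part of the corollary is immediate, and the whole content is the converse implication
$$
(\star)\qquad Q_F=S_n\ \Longrightarrow\ \vol_n(S_n)=\tfrac1{n!}\textstyle\prod_{i=1}^n l(u_i).
$$

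I would establish $(\star)$ by induction on $n$; the base case $n=2$ is immediate, because there the step below already reduces the claim to the one-dimensional triviality $\vol_1=l(u_1)$. For the inductive step, assume $S_n=\conv(\sigma_1,\dots,\sigma_n)$ with each $\sigma_i$ a maximal chord parallel to $u_i$. By maximality we may take $\sigma_n$ to be the full intersection of $S_n$ with a line in direction $u_n$; for a simplex such a chord joins a vertex $v$ to a point $w$ of the opposite facet $G$ with $v-w\parallel u_n$ (the degenerate possibility that $\sigma_n$ is an edge is handled the same way). Every vertex of $S_n$ is an endpoint of some $\sigma_i$, and since $w$ lies in the relative interior of $G$ no vertex of $G$ is an endpoint of $\sigma_n$; hence the $n$ vertices of $G$ are endpoints of $\sigma_1,\dots,\sigma_{n-1}$, so $G\subseteq\conv(\sigma_1,\dots,\sigma_{n-1})$ and therefore
$$
S_n|u_n^\perp=G|u_n^\perp=\conv\bigl(\sigma_1|u_n^\perp,\dots,\sigma_{n-1}|u_n^\perp\bigr),
$$
which is an $(n-1)$-simplex (it is the image of the $(n-1)$-simplex $G$ under projection in a direction transversal to $\aff G$, since $v-w\parallel u_n$ with $w\in G$ and $v\notin\aff G$). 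The maximal-chord formula~(\ref{maxchord}) applied to $S_n$ in direction $u_n$ gives $\vol_n(S_n)=\tfrac1n\,l(u_n)\,\vol_{n-1}(S_n|u_n^\perp)$, so it suffices to prove $\vol_{n-1}(S_n|u_n^\perp)=\tfrac1{(n-1)!}\prod_{j<n}l(u_j)$, i.e. that $S_n|u_n^\perp$ is again a simplex of the type covered by the induction hypothesis.

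The one remaining point — and what I expect to be the main obstacle — is exactly that this ``equal to the convex hull of its maximal chords'' property is inherited by $S_n|u_n^\perp$: one needs that $\sigma_j|u_n^\perp$ is a maximal chord of $S_n|u_n^\perp$ parallel to $u_j$ for every $j<n$, so that the $(n-1)$-simplex $S_n|u_n^\perp=\conv(\sigma_1|u_n^\perp,\dots,\sigma_{n-1}|u_n^\perp)$ does coincide with the convex hull of its own maximal chords in the directions $u_1,\dots,u_{n-1}$. The bound ``$\ge l(u_j)$'' is trivial, since $u_j\perp u_n$ forces $\sigma_j$ to project isometrically. For ``$\le l(u_j)$'' I would use the closed formula $l(u)=2\bigl(\sum_k|\langle a_k,u\rangle|/h_k\bigr)^{-1}$ for the length of the maximal chord of a simplex in direction $u$ (with $a_k$, $h_k$ its facet unit normals and altitudes; this follows from $\vol_{n-1}(S_n|u^\perp)=\tfrac12\sum_k\vol_{n-1}(G_k)|\langle a_k,u\rangle|$, by~(\ref{projbody}) and~(\ref{eq-zonotope}), together with $n\vol_n(S_n)=h_k\vol_{n-1}(G_k)$), applied to both $S_n$ and $S_n|u_n^\perp$, whose facet normals and altitudes are controlled by those of $S_n$; equivalently, the longest chord of $S_n|u_n^\perp$ parallel to $u_j$ is the largest $u_j$-width over sections of $S_n$ by $2$-planes spanned by $u_j$ and $u_n$, and the hypothesis $Q_F=S_n$ should force the extremal such section to possess an edge parallel to $u_j$ realizing its full $u_j$-width, so that that width equals the longest $u_j$-chord of the section and hence is $\le l(u_j)$. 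Once this is in hand, combining it with~(\ref{maxchord}) and the induction hypothesis gives $\vol_n(S_n)=\tfrac1{n!}\prod_i l(u_i)$, i.e. $(\star)$, and the corollary follows.
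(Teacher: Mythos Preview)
The paper's own proof of this corollary is a single sentence and addresses only the forward implication: if $\Lambda(S_n)=n!/n^n$ then equality holds throughout~(\ref{SnQinequality}), whence $\vol_n(S_n)=\vol_n(Q)$ and so $S_n=Q$. Your treatment of that direction is the same. The converse implication~$(\star)$ is simply not argued in the paper; your inductive attempt therefore goes beyond what the paper actually records.

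That attempt, however, has the gap you yourself isolate. The claim that $\sigma_j|u_n^\perp$ is a \emph{maximal} chord of $S_n|u_n^\perp$ in direction $u_j$ is not automatic --- orthogonal projection onto $u_n^\perp$ can strictly increase the maximal $u_j$-chord --- and neither of your two suggested routes is carried through: the closed-form formula for $l(u)$ in terms of facet data does relate the chord lengths of $S_n$ and of $S_n|u_n^\perp$, but you would still have to control how the facet normals and altitudes transform under projection, and the $2$-plane section remark (``the hypothesis $Q_F=S_n$ should force the extremal such section to possess an edge parallel to $u_j$ realizing its full $u_j$-width'') is an assertion, not an argument. Without this step the induction hypothesis cannot be applied, since it requires the $(n-1)$-simplex $S_n|u_n^\perp$ to equal the convex hull of its \emph{own} maximal chords. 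There is also a smaller issue one level earlier: when $\sigma_n$ happens to be an edge $[v,w]$, then $w$ is a vertex of $G$ and an endpoint of $\sigma_n$, so your vertex count no longer gives $G\subseteq\conv(\sigma_1,\dots,\sigma_{n-1})$; the degenerate case is not ``handled the same way'' and needs its own justification that the point $p=\sigma_n|u_n^\perp$ lies in $\conv(\sigma_j|u_n^\perp:j<n)$.
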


\begin{proof}
If $\Lambda(S_n)=n!/n^n$, then equality holds in (\ref{SnQinequality}) and so $S_n=Q$.
\end{proof}

\begin{cor}\label{cor-edgeinframe}
Let $S_n$ be a simplex. If $\Lambda(S_n)=n!/n^n$, then every best frame contains a vector that is parallel to an edge of $S_n$.
\end{cor}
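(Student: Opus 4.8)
The plan is to deduce the corollary from the equality analysis already carried out in the proof of Theorem~\ref{thm-simplex}, sharpened so as to apply to \emph{every} best frame, followed by an elementary counting argument on the vertices of $S_n$. First I would observe that if $F=\{u_1,\dots,u_n\}$ is a best frame for $S_n$, then $\Lambda(S_n;F)=\Lambda(S_n)=n!/n^n$. Tracing the chain of inequalities used in that proof --- the identity $\Lambda(S_n;F)=\prod_{i=1}^n l(u_i)\big/\bigl(n^n\vol_n(S_n)\bigr)$ together with (\ref{SnQinequality}) --- this value forces equality throughout, in particular $\vol_n(S_n)=\vol_n(Q)$, where $Q$ is the convex hull of maximal chords $C_1,\dots,C_n$ of $S_n$ parallel to $u_1,\dots,u_n$. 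Since $Q\subseteq S_n$ and the two bodies have equal volume, $Q=S_n$; this is the conclusion of Corollary~\ref{cor-cor}, but now established for an arbitrary best frame rather than for just one.

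Next I would exploit the combinatorial shape of $Q$. As $Q=\conv(C_1\cup\cdots\cup C_n)$ is the convex hull of $n$ segments, every extreme point of $Q$ is an endpoint of some $C_i$. But $Q=S_n$ has precisely the $n+1$ vertices $a_1,\dots,a_{n+1}$, so each $a_k$ must be an endpoint of at least one $C_i$. The $n$ chords supply only $2n$ endpoints, so if every $C_i$ had at most one endpoint among the $a_k$ we could reach at most $n<n+1$ vertices --- a contradiction. Hence some chord $C_{i_0}$ has both endpoints at vertices of $S_n$, say $C_{i_0}=[a_k,a_l]$. Since $S_n$ is a simplex, $[a_k,a_l]$ is an edge, and $u_{i_0}$, being parallel to $C_{i_0}$, is parallel to that edge. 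As the argument applies to any best frame, this is the assertion.

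The points that need care are routine rather than hard: one must check that the value $\Lambda(S_n;F)=n!/n^n$ genuinely collapses both inequalities in (\ref{SnQinequality}) for the frame $F$ at hand --- which is immediate from the displayed formula for $\Lambda(S_n;F)$ --- and one uses the standard fact that the extreme points of the convex hull of finitely many segments lie among the segments' endpoints. The only substantive idea is the pigeonhole step, and I do not anticipate any obstacle beyond this bookkeeping.
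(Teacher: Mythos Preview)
Your proposal is correct and follows essentially the same route as the paper: deduce $Q=S_n$ from equality in (\ref{SnQinequality}), then apply the pigeonhole argument that $n+1$ vertices cannot all be distributed among $n$ chords with at most one vertex per chord. Your explicit remark that this must be checked for \emph{every} best frame, not merely for the one frame guaranteed by Corollary~\ref{cor-cor} as stated, is a worthwhile point of precision that the paper's proof leaves implicit.
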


\begin{proof}
Consider a best frame $F$. By Corollary \ref{cor-cor}, the vertices of $S_n$ are extremal points of some maximal chord parallel to a direction in $F$. Since we have $n+1$ vertices and $n$ chords, surely at least one chord contains two vertices, which is what we want to prove.
\end{proof}

\begin{ex}
{\rm There are simplices $S$ in $\R^3$ with $\Lambda(S)<2/9$.  For example,
consider a rectangular box with edges of length 1, 2, 4 and the simplex $S$ which is the convex hull 
of four vertices of the box no two connected by an edge of the box. 
(The edges of $S$ have length $\sqrt{5}$, $\sqrt{17}$ and $2\sqrt{5}$.) 
By Theorem~\ref{thm-simplex} with $n=3$, $\Lambda(S)\le 2/9$. Suppose that $\Lambda(S)=2/9$. Let $F=\{v_1,v_2,v_3\}$ 
be a best frame of $S$. By Corollary \ref{cor-edgeinframe}, $F$ contains a direction, say $v_1$, 
parallel to one of the edges of $S$, say $E$. Since $S$ has no pair of orthogonal edges, by 
Corollary \ref{cor-cor} we infer that the maximal chords of $S$ parallel to $v_2$ and $v_3$, 
respectively, each contain a vertex of $S$ not in $E$. Taking into account that a maximal chord 
through a vertex has to intersect the opposite facet, elementary (but tedious) calculations show that 
we get a contradiction. Hence, $\Lambda (S)<2/9$.}\qed
\end{ex}

For general convex bodies, we can give a positive lower bound for  $\lambda(n)$.  It is based on the following lemma.

\begin{lem}\label{lem-crosspolytop-in-K}
If $K\in \CK_o^n$, then there exists a rectangular crosspolytope $C$ contained in $K$ with
$$
      \vol_n(K)\leq n!\vol_n(C).
$$
\end{lem}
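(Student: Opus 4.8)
The plan is to argue by induction on the dimension $n$, peeling off one axis of the crosspolytope at a time. For $n=1$ a body in $\CK^1_o$ is a symmetric segment and we may take $C=K$. For the inductive step, given $K\in\CK^n_o$, I would choose a unit vector $u_1$, pass to the central section $K'=K\cap u_1^\perp$, which is an origin-symmetric convex body in the $(n-1)$-dimensional subspace $u_1^\perp$ (here one uses $0\in\inte K$ to see that $K'$ is genuinely $(n-1)$-dimensional), apply the inductive hypothesis to get a rectangular crosspolytope $C'\subseteq K'$ with $\vol_{n-1}(K')\le (n-1)!\,\vol_{n-1}(C')$, and then set $C=\conv\bigl(C'\cup\{\pm\rho_K(u_1)u_1\}\bigr)$. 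Since every vertex of $C$ lies in $K$, convexity gives $C\subseteq K$, and $C$ is a rectangular crosspolytope: one axis along $u_1$, the others along the frame of $C'$ inside $u_1^\perp$. Being a bipyramid over $C'$ with apices $\pm\rho_K(u_1)u_1$, it has volume $\vol_n(C)=\tfrac{2\rho_K(u_1)}{n}\,\vol_{n-1}(C')$.

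The only genuine freedom, and the crux of the argument, is the choice of $u_1$. I would take $p$ to be a point of $K$ at maximal distance from the origin and set $u_1=p/\|p\|$ and $a_1:=\|p\|$. Then $K\subseteq a_1\B^n$, so for every $x\in K$ we have $\langle x,u_1\rangle\le\|x\|\le a_1=\langle p,u_1\rangle$, which gives $h_K(u_1)=a_1$; on the other hand $p=a_1u_1\in\partial K$ and no point of the ray beyond $p$ lies in $K$, so $\rho_K(u_1)=a_1$ as well. The point is that this particular $u_1$ forces $h_K(u_1)=\rho_K(u_1)$, which is exactly the equality that makes the recursion lossless — any other choice would introduce a factor $h_K(u_1)/\rho_K(u_1)>1$.

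It then remains to combine the volume estimates. By Brunn's theorem — equivalently, by Brunn--Minkowski applied in $u_1^\perp$ to $\tfrac12 S+\tfrac12(-S)\subseteq K'$ for a generic parallel section $S$, using the symmetry of $K$ — the central section of $K$ orthogonal to $u_1$ has maximal $(n-1)$-volume among all sections parallel to it; since $h_K(u_1)=a_1$ the projection of $K$ onto $\lin\{u_1\}$ is $[-a_1,a_1]$, so Fubini gives $\vol_n(K)\le 2a_1\,\vol_{n-1}(K')$. Chaining this with the inductive bound and the bipyramid identity yields
$$
\vol_n(K)\le 2a_1\,\vol_{n-1}(K')\le 2a_1(n-1)!\,\vol_{n-1}(C')=2a_1(n-1)!\cdot\frac{n}{2a_1}\,\vol_n(C)=n!\,\vol_n(C),
$$
completing the induction. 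I expect no serious obstacle beyond recognizing that the farthest-point direction is the right choice of $u_1$; the remaining ingredients (the bipyramid volume formula, Brunn's theorem, and the Fubini bookkeeping) are routine.
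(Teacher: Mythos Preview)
Your proof is correct and essentially identical to the paper's. For an origin-symmetric body the direction to a farthest point from the origin is precisely the direction of a diameter (since $\diam K=2\max_{x\in K}\|x\|$), so your choice of $u_1$ coincides with the paper's choice of $v$; both then apply Brunn--Minkowski to bound $\vol_n(K)$ by the width times the central section volume, invoke the inductive hypothesis on that section, and build the bipyramid $C$ over $C'$.
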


\begin{proof} We use induction on the dimension $n$.  For $n=2$ the statement follows by taking $C$ to be the convex hull of a diameter of $K$ and an orthogonal maximal chord. Let $n\ge 3$ and suppose that the asserted inequality holds in dimension $n-1$.  Let $v\in \S^{n-1}$ be the direction of a diameter of $K$. Since $K$ is origin symmetric, it follows from the
Brunn-Minkowski inequality (see e.g. \cite[p.~415]{Gardner}) that $V_{n-1}(K\cap v^\perp)$ is largest
among all sections $K\cap (\lambda v+v^\perp)$, $\lambda \in \R$, parallel to $v^\perp$. Hence,
\begin{equation}\label{vbound}
     \vol_n(K)\leq (\diam K) \vol_{n-1}(K\cap v^\perp).
\end{equation}
By the induction hypothesis, there exists an $(n-1)$-dimensional rectangular crosspolytope $C'$ in $K\cap v^\perp$ with
\begin{equation}\label{cbound}
     \vol_{n-1}(K\cap v^\perp)\leq (n-1)!\vol_{n-1}(C').
\end{equation}
Let
$$
C=\conv \left(\frac12 (\diam K)[-v,v]\cup C'\right)
$$
and note that $C$ is a rectangular crosspolytope.  Using (\ref{vbound}) and (\ref{cbound}), we obtain
$$
    \vol_n(K)\leq (n-1)!(\diam K) \vol_{n-1}(C')=n!\vol_n(C),
$$
which completes the proof.
\end{proof}

Now we can prove Theorem~\ref{thm-lower bound}.

\begin{proof}[Proof of Theorem \ref{thm-lower bound}]
It suffices to show that, for every $K\in \CK^n$, there exists a frame $F=\{u_1, \dots, u_n\}$ such that
$$
\Lambda(K;F)\geq \frac{\binom{2n}{n}}{(2n)^{n}}.
$$
Let $F$ be the frame that consists of the directions of the axes of a rectangular crosspolytope $C$ of maximum volume
contained in $\PicK$.
By (\ref{polar}), (\ref{projbody}), (\ref{prop-zang}) and Lemma \ref{lem-crosspolytop-in-K}, we have
$$
\Lambda(K;F)= \frac{\vol_n(K)^{n-1}}{\prod_{i=1}^n h_{\PiK}(u_i)}
\geq  \frac{\binom{2n}{n}}{2^n n^n}\, \frac{\prod_{i=1}^n 2\rho_{\PicK}(u_i)} {\vol_n(\PicK)}
= \frac{\binom{2n}{n}}{2^n n^n}\, \frac{n!\, \vol_n(C)} {\vol_n(\PicK)}
\geq \frac{\binom{2n}{n}}{(2n)^n},
$$
which proves the assertion.
\end{proof}

Since the case of the standard cube is trivial and that of the simplex is investigated in Theorem
\ref{thm-simplex}, it might be worthwhile to consider the last of the three regular polytopes that
exist in any dimension, the regular crosspolytope $C_n=\conv\{\pm e_1,\ldots, \pm e_n\}$.

\begin{thm}\label{thm-C_n}
$$
\Lambda(C_n)\ge \left\{\begin{array}{cl}
\DS \frac{(n-1)! \, \, 2^{\frac{n}{2}}}{n^{n-1}} & \mbox{for $n$ even};\\[.4cm]
\DS \frac{(n-1)! \, \, 2^{\frac{n-1}{2}}}{n^{n-1}} & \mbox{for $n$ odd}.\\
\end{array}\right..
$$
\end{thm}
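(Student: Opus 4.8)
The plan is to exploit the symmetry of the regular crosspolytope $C_n$ by choosing an explicit frame adapted to its coordinate structure, then to reduce the computation of the $\LW$-ratio for that frame to three ingredients: the volume of $C_n$, the volume of a projection of $C_n$ along a well-chosen direction, and an induction on the dimension. Recall $\vol_n(C_n)=2^n/n!$. The key observation is that a projection of $C_n$ onto $u^\perp$ is easiest to control when $u$ is a diagonal direction of the form $(\pm1,\ldots,\pm1,0,\ldots,0)/\sqrt{k}$ with an even number $k$ of nonzero entries, because then the projection of $C_n$ along $u$ again has a crosspolytope-like structure in the remaining coordinates. So first I would pair up the coordinate axes and take $u_1=(e_1+e_2)/\sqrt2$, $u_2=(e_1-e_2)/\sqrt2$, $u_3=(e_3+e_4)/\sqrt2$, $u_4=(e_3-e_4)/\sqrt2$, and so on; this is a genuine orthonormal frame, and for $n$ odd one axis is left over and one simply takes $u_n=e_n$ (which gives a projection equal to $C_{n-1}$, contributing the extra factor of $(n-1)!/n^{n-1}$-type term one sees by comparison with the even case).

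Second, for a single diagonal direction $u=(e_i+e_j)/\sqrt2$ I would compute $\vol_{n-1}(C_n|u^\perp)$. Projecting along $e_i+e_j$ identifies the two axes $\pm e_i$, $\pm e_j$: their images are the four points $\pm(e_i-e_j)/2$ collapsed appropriately, and together with the untouched $\pm e_k$ ($k\neq i,j$) one gets, up to the Jacobian factor $\sqrt2$ coming from the orthogonal projection of the relevant edges, a crosspolytope in the $n-1$ remaining orthogonal directions whose volume I can write down explicitly. The point of using pairs $(e_i+e_j)/\sqrt2$ and $(e_i-e_j)/\sqrt2$ simultaneously is that, after fixing the first two directions, $C_n$ projected onto $u_1^\perp\cap u_2^\perp$ is exactly (a scaled copy of) $C_{n-2}$, so that the product $\vol_{n-1}(C_n|u_1^\perp)\,\vol_{n-1}(C_n|u_2^\perp)$ factors in a way that reduces the whole $\LW$-ratio $\Lambda(C_n;F)$ to $\Lambda(C_{n-2};F')$ times an explicit constant depending only on $n$. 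Carrying out this recursion from $n$ down to the base cases $n=2$ (where $\Lambda(C_2)=\Lambda(T_2)=\tfrac12$ since $C_2$ is a square, so $\Lambda$ equals $1$ actually — I must be careful here: $C_2$ is a square and $\Lambda(C_2)=1$, $C_3$ is an octahedron and can be handled directly) or $n=3$ produces the stated bound, with the parity distinction arising precisely because the recursion steps down by $2$ and the leftover behaviour in odd dimension contributes one fewer factor of $\sqrt2$.

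The main obstacle, I expect, is the exact bookkeeping of the Jacobian and scaling constants in the projection step: one must verify that $C_n$ projected along $(e_i+e_j)/\sqrt2$ is \emph{precisely} an affine copy of a lower-dimensional crosspolytope (not merely contained in or containing one), or else settle for the inequality direction, which is all that is needed since the theorem only claims a lower bound for $\Lambda(C_n)$. In fact, since we only need $\Lambda(C_n)\ge\Lambda(C_n;F)$ for the single explicit frame $F$, it suffices to bound $\vol_{n-1}(C_n|u_i^\perp)$ from \emph{above} by the volume of an enclosing crosspolytope, which sidesteps the delicate exact-shape question. So the cleanest route is: (i) fix the paired diagonal frame $F$; (ii) show $\vol_{n-1}(C_n|u^\perp)\le c_n\,\vol_{n-1}(\widetilde C)$ for an explicit constant $c_n$ and a crosspolytope $\widetilde C$ in $u^\perp$, uniformly over the chosen diagonal directions $u$; (iii) compute $\vol_n(C_n)=2^n/n!$ and assemble the ratio; (iv) track the powers of $2$ and factorials through the dimension drop of $2$, reading off the even/odd dichotomy. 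The routine calculations are exactly those of step (ii) and (iv), which I would relegate to ``elementary computation.''
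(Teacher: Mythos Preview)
Your proposal is correct and uses the same frame as the paper: the paired diagonals $(e_{2i-1}\pm e_{2i})/\sqrt{2}$, with one coordinate axis $e_n$ left over when $n$ is odd. The difference lies only in how the projection volumes are computed.

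The paper does not describe the projection geometrically or recurse; it simply evaluates $\vol_{n-1}(C_n|u^\perp)$ via the Cauchy-type formula $\vol_{n-1}(C_n|u^\perp)=\tfrac12\sum_j \vol_{n-1}(F_j)\,|u^Tw_j|$, using that all $2^n$ facets are regular $(n-1)$-simplices of volume $\sqrt{n}/(n-1)!$ with outward normals $(\pm1,\ldots,\pm1)/\sqrt{n}$. For an edge direction $u=(e_1-e_2)/\sqrt{2}$ this sum is immediate and yields $\vol_{n-1}(C_n|u^\perp)=2^{n-1}/(\sqrt{2}(n-1)!)$; the ratio $\Lambda(C_n;F)$ then drops out in one line, with no induction.

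Your geometric route works too, and is in fact cleaner than you fear. The orthogonal projection along $u=(e_1+e_2)/\sqrt{2}$ sends $\pm e_1,\pm e_2$ to $\pm(e_1-e_2)/2$ and fixes $\pm e_k$ for $k\ge 3$, so $C_n|u^\perp$ is \emph{exactly} the rectangular crosspolytope $\conv\{\pm(e_1-e_2)/2,\pm e_3,\ldots,\pm e_n\}$ in $u^\perp$, with $(n-1)$-volume $\tfrac{2^{n-1}}{(n-1)!}\cdot\tfrac{1}{\sqrt{2}}$ --- no upper bound or containment argument is needed. Since by symmetry every diagonal direction in your frame gives the same projection volume, the recursion to $\Lambda(C_{n-2};F')$ is an unnecessary detour (and the exponent mismatch between $(n-1)$ and $(n-3)$ makes it awkward anyway): just plug the single projection volume into the ratio directly, and for odd $n$ use the additional elementary fact $\vol_{n-1}(C_n|e_n^\perp)=\vol_{n-1}(C_{n-1})=2^{n-1}/(n-1)!$.
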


\begin{proof} First note that
$$
\vol_n(C_n)= \frac{2^n}{n!}, \qquad \vol_{n-1}(C_n|e_1^{\perp})= \frac{2^{n-1}}{(n-1)!},
$$
and that all the facets of $C_n$ are regular simplices $T_{n-1}$, with
$$
 \vol_{n-1}(T_{n-1}) = \frac{\sqrt{n}}{(n-1)!}.
$$

Also, if $u$ is the direction of an edge of $C_n$, e.g.,
$$
u=\frac{1}{\sqrt{2}}(e_1-e_2)
$$
and $w_j$, $j\in [2^n]$, denote the $2^n$ facet outward unit normals $(\pm 1/\sqrt{n},\ldots,\pm 1/\sqrt{n})$ of $C_n$, we have
$$
\vol_{n-1}(C_n|u^{\perp})= \frac{1}{2} \vol_{n-1}(T_{n-1})\sum_{j=1}^{2^n} |u^Tw_j|
= \frac{1}{2\sqrt{2}(n-1)!} 2^{n-2} (2+2)= \frac{1}{\sqrt{2}(n-1)!} 2^{n-1}.
$$

Now, let $n$ be even. Then, for $i\in [n/2]$, the vectors
$$
u_{2i-1}=\frac{1}{\sqrt{2}} (0,\ldots,0,1,1,0,\ldots,0)^T,\qquad
u_{2i}=\frac{1}{\sqrt{2}} (0,\ldots,0,1,-1,0,\ldots,0)^T
$$
(where the first nonzero entries are in the $(2i-1)$th position),
corresponding to directions of edges of $C_n$, form a frame $F$.
Therefore,
$$
\Lambda(C_n)\ge\Lambda(C_n;F)= \frac{\vol_n(C_n)^{n-1}}{\vol_{n-1}(C_n|u^{\perp})^n}
= \DS \frac{(n-1)! \, \, 2^{\frac{n}{2}}}{n^{n-1}}.
$$
If $n$ is odd, we use $n-1$ projections along edges and one projection along an orthogonal coordinate direction
$e$ and obtain
$$
\Lambda(C_n)\ge \frac{\vol_n(C_n)^{n-1}}{\vol_{n-1}(C_n|u^{\perp})^{n-1}\vol_{n-1}(C_n|e^{\perp})}
= \DS \frac{(n-1)! \, \, 2^{\frac{n-1}{2}}}{n^{n-1}},
$$
which completes the proof.
\end{proof}

\begin{con}\label{conjecture-C_n-intro}
Let $n$ be even. Then
$$
\Lambda(C_n)= \DS \frac{(n-1)! \, \, 2^{\frac{n}{2}}}{n^{n-1}}.
$$
\end{con}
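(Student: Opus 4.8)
The plan is the following. Theorem~\ref{thm-C_n} already gives the lower bound $\Lambda(C_n)\ge\frac{(n-1)!\,2^{n/2}}{n^{n-1}}$, so it remains to prove the matching upper bound, i.e.\ that $\Lambda(C_n;F)\le\frac{(n-1)!\,2^{n/2}}{n^{n-1}}$ for \emph{every} frame $F$; in other words, that the edge frame exhibited in the proof of Theorem~\ref{thm-C_n} is a best frame for $C_n$. I would first rewrite $\vol_{n-1}(C_n|v^\perp)$ via the zonotope $\Pi C_n$. All $2^n$ facets of $C_n$ are regular simplices $T_{n-1}$ with $\vol_{n-1}(T_{n-1})=\sqrt n/(n-1)!$ and outward unit normals $\frac1{\sqrt n}\varepsilon$, $\varepsilon\in\{-1,1\}^n$, so by (\ref{projbody}) and (\ref{eq-zonotope}), for every $v\in\S^{n-1}$,
$$
\vol_{n-1}(C_n|v^\perp)=h_{\Pi C_n}(v)=\frac12\cdot\frac{\sqrt n}{(n-1)!}\cdot\frac1{\sqrt n}\sum_{\varepsilon\in\{-1,1\}^n}|v^T\varepsilon|=\frac1{2(n-1)!}\sum_{\varepsilon\in\{-1,1\}^n}|v^T\varepsilon|.
$$

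Inserting this together with $\vol_n(C_n)=2^n/n!$ into the definition of the $\LW$-ratio and using $(n!)^{n-1}=n^{n-1}((n-1)!)^{n-1}$, a short computation gives, for an arbitrary frame $F=\{v_1,\dots,v_n\}$,
$$
\Lambda(C_n;F)=\frac{\vol_n(C_n)^{n-1}}{\prod_{i=1}^n\vol_{n-1}(C_n|v_i^\perp)}=\frac{(n-1)!}{n^{n-1}}\left(\prod_{i=1}^n\frac1{2^n}\sum_{\varepsilon\in\{-1,1\}^n}|v_i^T\varepsilon|\right)^{-1}.
$$
Thus Conjecture~\ref{conjecture-C_n-intro} is equivalent to showing that, for every frame $\{v_1,\dots,v_n\}$,
$$
\prod_{i=1}^n\frac1{2^n}\sum_{\varepsilon\in\{-1,1\}^n}|v_i^T\varepsilon| \ge 2^{-n/2}.
$$
I would deduce this by bounding the factors one at a time, using the sharp form of the Khinchin inequality due to Szarek \cite{Szarek}: for every $v\in\R^n$ one has $\frac1{2^n}\sum_{\varepsilon\in\{-1,1\}^n}|v^T\varepsilon|\ge\frac1{\sqrt2}\|v\|$, with equality exactly when $v$ has precisely two nonzero coordinates, of equal modulus. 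Applied to the $n$ unit vectors of a frame this yields the product bound immediately, hence $\Lambda(C_n;F)\le\frac{(n-1)!\,2^{n/2}}{n^{n-1}}$ for all $F$ and therefore $\Lambda(C_n)\le\frac{(n-1)!\,2^{n/2}}{n^{n-1}}$, completing the argument.

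The equality case would then be automatic: the frame of the proof of Theorem~\ref{thm-C_n} consists of the vectors $\frac1{\sqrt2}(e_k\pm e_l)$, which are precisely the extremizers in Szarek's inequality, so this frame is indeed optimal --- in agreement with Theorem~\ref{thm-facets}, as each such vector is parallel to a facet of $C_n$. The main obstacle is that everything rests on the optimal constant $1/\sqrt2$ (and its extremizers) in the $L^1$ Khinchin inequality: the reductions in the first two paragraphs are routine, but a self-contained proof would have to establish $\sum_{\varepsilon\in\{-1,1\}^n}|v^T\varepsilon|\ge 2^{n-1}\sqrt2\,\|v\|$ for all $v\in\R^n$, and the standard elementary route --- interpolating $\|\cdot\|_2$ between $\|\cdot\|_1$ and $\|\cdot\|_4$, using $\sum_{\varepsilon}(v^T\varepsilon)^4\le 3\cdot 2^n\|v\|^4$ --- only yields the constant $1/\sqrt3$, which is too weak. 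I do not see a way to exploit that the $v_i$ form an orthonormal basis, rather than being arbitrary vectors, so as to bypass the pointwise sharp estimate.
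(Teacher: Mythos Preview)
The paper does \emph{not} prove this statement: it is explicitly labelled a conjecture, and the remarks following it only record that it is true for $n=2,4$ and would follow from an unproved conjecture of Martini--Weissbach about the minimal-volume hyperplane projection of $C_n$. So there is no ``paper's own proof'' to compare against.

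Your argument, however, is correct and in fact resolves the conjecture. The reduction
\[
\Lambda(C_n;F)=\frac{(n-1)!}{n^{n-1}}\Bigl(\prod_{i=1}^n\frac1{2^n}\sum_{\varepsilon\in\{-1,1\}^n}|v_i^T\varepsilon|\Bigr)^{-1}
\]
is accurate, and Szarek's sharp $L^1$ Khinchin inequality $\frac1{2^n}\sum_{\varepsilon}|v^T\varepsilon|\ge\frac1{\sqrt2}\|v\|$ applied to each unit vector $v_i$ immediately gives $\Lambda(C_n;F)\le\frac{(n-1)!\,2^{n/2}}{n^{n-1}}$ for every frame. Combined with Theorem~\ref{thm-C_n}, this settles the conjecture. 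Note that your pointwise bound does not even use orthogonality of the $v_i$; what is needed from the parity assumption is only the \emph{lower} bound, since for odd $n$ no frame can consist entirely of Szarek extremizers $\frac1{\sqrt2}(e_k\pm e_l)$.

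It is worth pointing out that your observation simultaneously settles the Martini--Weissbach conjecture the paper cites: minimizing $\vol_{n-1}(C_n|v^\perp)$ over $v\in\S^{n-1}$ is exactly minimizing $\sum_{\varepsilon}|v^T\varepsilon|$, and Szarek's equality case says the minimizers are precisely the edge directions of $C_n$. The paper's authors apparently did not make this connection; your reliance on Szarek is entirely legitimate and no self-contained reproof is needed.
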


Note that Conjecture \ref{conjecture-C_n-intro} is trivially correct for $n=2$, follows
from \cite{martini-weissbach-84} for $n=4$, and for general even $n$ from the conjecture of \cite[p.167]{martini-weissbach-84}
that the orthogonal projections of $C_n$ of minimal volume occur in the direction of edges
of $C_n$. We remark that (when specified appropriately) the problems of finding
maximal or minimal projections of polytopes on hyperplanes are $\NP$-hard even for very
special classes of polytopes; see
\cite{burger-gritzmann-00} and \cite{burger-gritzmann-klee-96}.

\section{Algorithmic issues}\label{sec-algorithms}

We will now deal with the reverse Loomis--Whitney problem from a computational point of view.
In order to be able to employ the binary {\em Turing machine model} we will restrict our
considerations to rational polytopes.

\subsection{Volume computation and the case of variable dimension}
Since the projection averages and the $\LW$-constant involve volume computations
we begin with a brief account of the algorithmic properties of the task to
compute or approximate the volume of given polytopes.

Let $P$ and $Q$ be polytopes in $\R^n$. We say that $P$ is an {\em $\CH$-polytope} and write
$P=(n,m,A,b)$, if $P$ is specified by $n,m\in \N$, $A\in \Q^{m\times n}$ and $b\in \Q^m$, such that $P=\{x\in \R^n: Ax\le b\}$.

Similarly, $Q$ is called a {\em $\CV$-polytope}, written $Q=(n,k,V)$, if $Q$
is specified by $n,k\in \N$, $V\subset \Q^{n}$ with $k=|V|$ and $Q=\conv (V)$.

As it is well-known, each $\CH$-polytope admits a $\CV$-presentation, each
$\CV$-polytope admits an $\CH$-presentation, and one presentation can be derived
from the other in polynomial time if the dimension $n$ is {\em fixed}.
As, for instance, the standard $n$-cube has $2n$ facets but $2^n$ vertices, and the standard $n$-cross-polytope has $2^n$ facets but $2n$ vertices, we have, however, to distinguish the different kinds of representations if the dimension is part of the input.

In the binary {\em Turing machine} model, the {\em size} of the input is measured as the length $\size(P)$ of the binary encoding needed to present the input data. The running time of an algorithm is defined in terms of the number of operations of its corresponding Turing machine; see e.g. \cite{GJ-79}.

In the sequel, {\sc $\CH$-Volume} and {\sc $\CV$-Volume} will denote the following decision versions:
\begin{quote}
Given an $\CH$-polytope or a $\CV$-polytope $P$, respectively, and $\nu\in \Q$ with
$\nu \ge 0$, decide whether $\vol_n(P) \le \nu$.
\end{quote}
In the randomized case, there are additional positive rationals $\beta$ and $\epsilon$,
and the task is to determine a positive rational random variable $\nu$ such that
$$
\prob \left\{\left| \frac{\nu}{\vol_n(P)}-1\right| \le \epsilon\right\}
\ge 1-\beta.
$$
The following proposition summarizes complexity results that are relevant for
our purpose. The non trivial results are due to \cite{dyer-frieze-88}, \cite{dyer-frieze-89}, \cite{khachiyan-88}, \cite{khachiyan-89} and \cite{lawrence-91}; see \cite{gritzmann-klee-94b} and \cite{gritzmann-klee-17} for more details, further references to related results and extensions.

\begin{prop}\label{prop-volume}
\begin{enumerate}
\item If the dimension is fixed and $P$ is a $\CV$- or an $\CH$-polytope, the volume of $P$ can be computed in polynomial time.
\item If $P$ is a $\CV$-polytope, then the binary size of $\vol_n(P)$ is bounded above by a polynomial in the input size. In general, the binary size of $\vol_n(P)$ for $\CH$-polytopes $P$ is not bounded above by a polynomial in the input size.
\item {\sc $\CH$-Volume} and {\sc $\CV$-Volume} are  $\# \P$-hard. If fact, the problem of computing the volume is $\#\P$-hard both, for the intersection of the $\CH$-unit cube with a rational halfspace and for the convex hull of the regular $\CV$-crosspolytope and an additional integer vector.
\item There is a randomized algorithm for volume computation for $\CV$- and $\CH$-polytopes which runs in time that is polynomial in the input size and in $1/\epsilon$.
\end{enumerate}
\end{prop}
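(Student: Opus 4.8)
The plan is to prove the four items separately, assembling the classical machinery behind each; only the last requires genuinely deep input, while the first three reduce to determinant bookkeeping, one explicit size--blow--up example, and two counting reductions.

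For item (1) I would first recall that in fixed dimension the two presentations are polynomially interchangeable: the vertices of an $\CH$-polytope $(n,m,A,b)$ are among the solutions of the $\binom{m}{n}$ square subsystems of $Ax=b$, each obtained by Cramer's rule, so for fixed $n$ there are polynomially many of them and each has entries of polynomial binary size; dually, the facets of a $\CV$-polytope are read off from $n$-subsets of $V$. Hence it suffices to compute $\vol_n$ for a $\CV$-polytope $\conv(V)$ with $|V|=k$. I would form a triangulation using only the points of $V$ (a pulling or placing triangulation); for fixed $n$ the number of full-dimensional simplices is at most $\binom{k}{n+1}$, a polynomial in $k$. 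Each simplex $\conv\{v_0,\dots,v_n\}$ has volume $\tfrac{1}{n!}\bigl|\det[v_1-v_0,\dots,v_n-v_0]\bigr|$, computable in polynomial time by Gaussian elimination with the standard control on intermediate bit-lengths, and summing the polynomially many rational contributions yields $\vol_n$ in polynomial time, the output being of polynomial size by item (2). Lawrence's signed-decomposition formula \cite{lawrence-91} gives an alternative route.

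For item (2) the positive half follows by clearing denominators: if $D$ is a common denominator of the coordinates occurring in $V$, then $\vol_n(\conv V)=D^{-n}\vol_n(\conv DV)$ and $\conv DV$ is a lattice polytope, so $n!\,\vol_n(\conv DV)$ is a nonnegative integer. This integer is bounded by $n!$ times the volume of an axis-parallel box containing the body, hence by $(2\max_{v\in V}\|Dv\|_\infty)^n$; taking logarithms shows that its bit-length, and that of the denominator $D^n n!$, is polynomial in $\size(P)$. For the negative half I would exhibit a family of $\CH$-polytopes of polynomial input size whose volume, in lowest terms, needs exponentially many bits. The mechanism is that an $\CH$-polytope may have exponentially many vertices, each a Cramer ratio carrying its own large determinant as denominator; a triangulation then produces exponentially many simplex volumes over pairwise distinct denominators which, for a suitable choice of $A$ and $b$, fail to cancel, forcing an exponential denominator in the sum. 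The explicit construction is the one in \cite{gritzmann-klee-94b}.

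For item (3) I would give polynomial-time reductions from a $\#\P$-complete counting problem. For the cube--halfspace instance the key observation is that $\vol_n\bigl([0,1]^n\cap\{a^Tx\le b\}\bigr)$, read as a function of $b$ stepped through integer thresholds, encodes a knapsack-type count of $0/1$ vectors $x$ with $a^Tx\le b$, a $\#\P$-hard quantity; the count is recovered from sufficiently precise volume values by a finite-difference argument, with the required precision staying polynomial (the Dyer--Frieze approach \cite{dyer-frieze-88}). The $\CV$-crosspolytope-plus-integer-vector instance is treated analogously, exhibiting a $\#\P$-hard quantity in the volume. The substance of this item is exactly the design of these two reductions together with the polynomiality check on the precision.

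The remaining statement, item (4), is by far the deepest, and I would import it as a black box rather than reprove it. I would invoke the Dyer--Frieze--Kannan method \cite{dyer-frieze-89}: place $P$ in near-isotropic position by an affine preprocessing, fix a telescoping chain of nested convex bodies $K_0\subset K_1\subset\cdots\subset K_m=P$ with $K_0$ a ball of known volume, and write
$$\vol_n(P)=\vol_n(K_0)\prod_{i=0}^{m-1}\frac{\vol_n(K_{i+1})}{\vol_n(K_i)},$$
where $m$ is polynomially bounded and each ratio is of bounded size. Each ratio is estimated by drawing approximately uniform samples from $K_i$ via a rapidly mixing random walk (ball walk or hit-and-run) that needs only a membership oracle, available in polynomial time for either presentation; the mixing-time and isotropy estimates give a running time polynomial in $\size(P)$ and $1/\epsilon$, with the confidence $1-\beta$ achieved by standard median-of-means amplification. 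The hard part of the whole proposition is precisely this last item, since it is not elementary and rests on the full rapidly-mixing-Markov-chain volume machinery; the other three items, by contrast, dissolve into the triangulation/determinant estimates, the explicit blow-up example, and the two counting reductions sketched above.
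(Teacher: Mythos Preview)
The paper does not prove this proposition at all: it is stated as a summary of known complexity results, with the sentence preceding it attributing the nontrivial parts to \cite{dyer-frieze-88}, \cite{dyer-frieze-89}, \cite{khachiyan-88}, \cite{khachiyan-89}, \cite{lawrence-91} and referring to \cite{gritzmann-klee-94b}, \cite{gritzmann-klee-17} for details. There is no proof in the paper to compare against.

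Your sketches are reasonable outlines of the standard arguments behind those citations and go well beyond what the paper supplies. Item~(1) via triangulation and determinants is the classical route; item~(2) positive half is correct, and for the negative half you rightly point to the construction in \cite{gritzmann-klee-94b} (though your heuristic ``coprime denominators that fail to cancel'' is only a caricature of the actual construction, which is more delicate); item~(3) correctly identifies the Dyer--Frieze knapsack-count reduction for the cube-plus-halfspace case, and the crosspolytope case is Khachiyan's; item~(4) is indeed the Dyer--Frieze--Kannan Markov-chain algorithm, which everyone imports as a black box. In short, you have reconstructed the literature the paper merely cites, and nothing in your outline is wrong, but for the purposes of matching the paper a one-line appeal to those references is all that is expected.
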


The $\LW$-ratio for a given polytope and a given frame is the ratio of two terms that
involve volume computation. Hence, it might seem natural to expect that the
$\#\P$-hardness results for volume computation imply intractability results for
$\LW$-constant computation.

To be more specific, let in the sequel
$a_1,\ldots,a_m \in \Q^{n}\setminus \{0\}$, $\beta_1,\ldots,\beta_m\in \Q$, such that
$$
P=\{x\in \R^n: a_1^Tx\le \beta_1, \ldots, a_m^Tx\le \beta_m\}
$$
is an $n$-dimensional polytope in $\R^n$ with $m$ facets
$F_1, \dots, F_m$. Since our problem is invariant under translations we can assume that
$\beta_1,\ldots,\beta_m > 0$. Then $\beta_j/ \|a_j\|$ is the distance
of $F_j$ from the origin.
While $\vol_n(P)\in \Q$, the facet volumes $\vol_{n-1}(F_j)$ need not be rational.
However, since
$$
\vol_{n-1}(F_j) = \frac{n \|a_j\|}{\beta_j}\vol_n\bigl(\conv(\{0\}\cup F_j)\bigr),
$$
the {\em normalized facet volumes}
$$
\omega_j=\frac{\vol_{n-1}(F_j)}{\|a_j\|}, \qquad j\in [m],
$$
are all rational. Using (\ref{eq-zonotope}), we then have the following identity.

\begin{rem}\label{rem-LW-ratio-poly}
$$
\left(\frac{1}{n} \sum_{j=1}^m  \beta_j\omega_j\right)^{n-1} = \Lambda(P;F) \cdot
\prod_{i=1}^n \left(\frac{1}{2} \sum_{j=1}^m \omega_j |u_i^Ta_j|\right) .
$$
\end{rem}

Therefore $\Lambda(P;F)$ is the ratio of terms involving the normalized facet volumes of $P$.
Hence, from the knowledge of $\Lambda(P;F_k)$ for a suitable finite set of different
frames $F_k\in \CF^n$ one can in principle compute all normalized facet volumes
(through the corresponding system of multivariate polynomial equations along the lines of \cite{DGH98}).
Hence $\vol_n(P)$ can be approximated up to any given precision $2^{-p}$, $p\in \N$.
This indicates that the computation of $\Lambda(P;F_k)$ for such frames should be $\#\P$-hard.

The situation seems, however, much less obvious if we just had an efficient algorithm for computing
(or approximating) $\Lambda(P)$. It is, in fact, not clear at all how we could actually use
the knowlege of $\Lambda(P)$ for volume computation (or to solve any other $\#\P$-hard problem). So it is conceivable
that the task of computation the $\LW$-constant of $\CV$- or $\CH$-polytopes is easier than volume computation.
We conjecture, however, that this is not the case.

\begin{conj}\label{conj-hardness}
The problem, given a $\CV$-polytope $P$ (or an $\CH$-polytope, respectively) and a precision $p\in \N$,
approximate $\Lambda(P)$ up to $p$ binary digits, is $\#\P$-hard.
\end{conj}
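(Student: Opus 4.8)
The plan is to prove Conjecture~\ref{conj-hardness} by a polynomial-time Turing reduction from volume computation, which by Proposition~\ref{prop-volume}(3) is $\#\P$-hard already for $R=[0,1]^{n}\cap\{x:a^{T}x\le\beta\}$ in the $\CH$-case, and for $R=\conv(\{\pm e_{1},\dots,\pm e_{n}\}\cup\{z\})$, $z\in\Z^{n}$, in the $\CV$-case. The engine is Remark~\ref{rem-LW-ratio-poly}: for a \emph{fixed} frame $F$ the number $\Lambda(P;F)$ is an explicit rational expression in the normalized facet volumes $\omega_{j}$ and the facet distances $\beta_{j}$ of $P$; equivalently, $h_{\Pi P}(w)=\vol_{n-1}(P|w^{\perp})=\tfrac12\sum_{j}\omega_{j}\,|w^{T}a_{j}|$ is a piecewise linear function of $w$ whose coefficients are signed sums of the $\omega_{j}$, and $\vol_{n}(P)=\tfrac1n\sum_{j}\beta_{j}\omega_{j}$. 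Hence, if one could evaluate $\Lambda(P;F)$ for enough frames $F$, one could set up a system of polynomial equations in the finitely many rationals $\omega_{j}$ (in the spirit of \cite{DGH98}), solve it, and read off $\vol_{n}(P)$. Since the oracle returns only $\Lambda(P)=\max_{F}\Lambda(P;F)$, the decisive step is to turn an oracle call into an evaluation of $\Lambda(P;F)$ for a frame $F$ of our choosing.

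\noindent\textbf{Forcing the frame.}
The key point is that $\Lambda(\cdot;F)$ is invariant under every linear map $T$ that is diagonal in the basis $F=\{u_{1},\dots,u_{n}\}$ (i.e.\ $Tu_{i}=\tau_{i}u_{i}$, $\tau_{i}>0$): since $\vol_{n}(TK)=(\prod_{i}\tau_{i})\vol_{n}(K)$ and $\vol_{n-1}(TK|u_{i}^{\perp})=(\prod_{j\neq i}\tau_{j})\vol_{n-1}(K|u_{i}^{\perp})$, the two scalings cancel and $\Lambda(TK;F)=\Lambda(K;F)$. Given a target \emph{rational} orthonormal frame $F_{k}$ (such frames are dense, e.g.\ via the Cayley transform of rational skew-symmetric matrices, and can be chosen of polynomial size), apply to $R$ the $F_{k}$-diagonal map $T_{k}$ with $\tau_{i}=M^{\,n-i}$, $M=2^{\mathrm{poly}}$, and set $R_{k}=T_{k}(R)$. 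Applying $T_{k}$ to an inscribed and to a circumscribed box of $R$, and using that the projection body is monotone with respect to inclusion, one sees that $\Pi R_{k}$ is caught between two $F_{k}$-oriented rectangular boxes whose aspect ratios are as lopsided as $M^{\,n-1}:\dots:M:1$ and whose volumes differ only by a factor of polynomial binary length. A \emph{metric} version of Theorem~\ref{GeneralizedO'Rourke}/O'Rourke's theorem should then pin the minimal-volume box enclosing $\Pi R_{k}$ --- hence, by Lemma~\ref{rem-frame-box}, every best frame of $R_{k}$ --- to within an exponentially small angle of $F_{k}$, and, via a companion sensitivity estimate for $\Lambda$, force $\Lambda(R_{k})$ to agree with $\Lambda(R_{k};F_{k})=\Lambda(R;F_{k})$ up to an exponentially small error. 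Thus an oracle call on the rational, polynomial-size polytope $R_{k}$ with precision $p=\mathrm{poly}$ returns $\Lambda(R;F_{k})$ to polynomially many bits.

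\noindent\textbf{Extraction.}
Doing this for polynomially many frames $F_{1},\dots,F_{N}$ in general position relative to the $a_{j}$ yields approximations of
$$
\Lambda(R;F_{k})=\frac{2^{n}}{n^{n-1}}\cdot\frac{\bigl(\sum_{j}\beta_{j}\omega_{j}\bigr)^{n-1}}{\prod_{i=1}^{n}\bigl(\sum_{j}\omega_{j}\,|(w^{(k)}_{i})^{T}a_{j}|\bigr)}\,,
$$
a determined system of polynomial equations of degree at most $n$ in the $O(m)$ unknowns $\omega_{j}$, where $m$ is the number of facets of $R$. Solving it recovers the $\omega_{j}$, and hence $\vol_{n}(R)=\tfrac1n\sum_{j}\beta_{j}\omega_{j}$, to the desired number of digits. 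All constructed objects have polynomial binary size and only polynomially many oracle calls are made, so the reduction is polynomial; this proves $\#\P$-hardness, and the $\CV$-case is entirely analogous once a $\#\P$-hard $\CV$-volume is fixed.

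\noindent\textbf{Main obstacle.}
The crux --- and the reason the statement is still only conjectured --- is the forcing step. What is missing is a \emph{quantitative, metric} strengthening of Theorem~\ref{GeneralizedO'Rourke}/O'Rourke's theorem: if a convex body is sandwiched between two boxes of the same, highly anisotropic orientation with volumes within a controlled factor, then every minimal-volume enclosing box must be close, in orientation, to that common orientation, with an explicit bound; together with a matching sensitivity bound showing that rotating the maximizing frame of $\Lambda$ by a small angle changes the value only proportionally, so that the exponentially small residual misalignment cannot corrupt the polynomially many digits of $\vol_{n}(R)$ being extracted. A secondary difficulty is to verify that the polynomial system produced by the oracle calls is non-degenerate and can be solved (or interpolated) in polynomial time --- one may need to restrict to frames obtained by rotating a fixed frame within a single coordinate $2$-plane, in the spirit of the proof of Theorem~\ref{GeneralizedO'Rourke}, to keep it under control. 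Should this program fail, proving Conjecture~\ref{conj-hardness} would require exploiting the structure of $\Lambda$ as a maximum over frames more directly than through volume computation.
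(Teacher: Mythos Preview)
The paper does not prove this statement: it is explicitly stated as a conjecture, and the surrounding discussion makes clear that the authors do \emph{not} know how to reduce volume computation to an oracle for $\Lambda(P)$. In fact, they write that ``it is not clear at all how we could actually use the knowledge of $\Lambda(P)$ for volume computation,'' and that it is conceivable that computing $\Lambda(P)$ is easier than volume computation. So there is no proof in the paper to compare against; your proposal is an attempt to settle an open problem, and you have correctly flagged it as such.

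Your forcing idea (exploiting the invariance $\Lambda(T_kR;F_k)=\Lambda(R;F_k)$ under $F_k$-diagonal maps to make the best frame of $T_kR$ approximately equal to $F_k$) is a genuinely new angle relative to the paper, and the obstacle you isolate is the right one: there is at present no quantitative version of Theorem~\ref{GeneralizedO'Rourke} saying that a body sandwiched between two highly anisotropic coaxial boxes must have its minimal enclosing box nearly coaxial with them. Without such a stability statement, the oracle value $\Lambda(R_k)$ need not be close to $\Lambda(R;F_k)$, and the whole extraction collapses. Note also that the needed closeness is not merely ``small angle'' but ``exponentially small in the input size,'' since you must recover polynomially many correct bits; it is not evident that an anisotropy of $M=2^{\mathrm{poly}}$ suffices for this.

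Two further gaps deserve mention. First, the ``solve the polynomial system'' step is problematic in variable dimension: the equations have degree $n$ in $\Theta(m)$ unknowns, and for the $\#\P$-hard $\CH$-instances $n$ is the running parameter, so you are asking to solve a degree-$n$ system in $\Theta(n)$ variables in time polynomial in $n$. This is not known to be feasible in general, and the remark that one might restrict to planar rotations does not by itself resolve it. Second, the claim that ``the $\CV$-case is entirely analogous'' is not justified: your extraction scheme treats the normalized facet volumes $\omega_j$ as the unknowns, but for the $\#\P$-hard $\CV$-instance $\conv(\{\pm e_1,\dots,\pm e_n\}\cup\{z\})$ the number of facets is exponential in $n$, so the system has exponentially many unknowns and cannot be written down, let alone solved, in polynomial time. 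A separate argument would be needed for $\CV$-polytopes.
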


It is also conceivable that $\LW$-constant computation is actually harder than volume computation in
that not even randomization helps. Similarly to radii-computations \cite{bgkkls98}, \cite{bgkkls01} there may
exist a sequence of convex bodies $K_n\in \CK^n$ for which the set of all frames
that produce good approximations of $\Lambda(K_n)$ has measure that decreases too quickly
in the dimension $n$.

Getting back to the deterministic case, suppose now that we had an efficient algorithm at hand that
would compute (or closely approximate) the volume of polytopes. Could we use this algorithm as a
subroutine to compute $\Lambda(P)$ efficiently?
Recall that by (\ref{eq-zonotope}) and Lemma \ref{rem-frame-box} the computation of
$\Lambda(P)$ could then be achieved by finding a rectangular box of minimal volume containing the zonotope
$$
\PiP = \frac{1}{2} \sum_{j=1}^m \omega_j  [-a_j, a_j].
$$
As it turns out every zonotope is actually the projection body of a polytope.

\begin{lem}\label{lem-box-zonotope}
Let $a_1,\ldots,a_m \in \R^n\setminus \{0\}$ spanning $\R^n$ and set
$$Z= \frac{1}{2}\sum_{j=1}^m[-a_j, a_j].$$
Then there exists a polytope $Q$ with outer facet normals $\pm a_1,\ldots,\pm a_m$ such that
$Z=\PiQ$.
\end{lem}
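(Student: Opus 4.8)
The plan is to derive this from the classical Minkowski existence theorem for polytopes with prescribed facet normals and facet volumes, using an origin-symmetric ansatz. Recall from (\ref{eq-zonotope}) that whenever $Q$ is an $n$-dimensional polytope whose facets have outward unit normals $v_1,\dots,v_N$ and respective $(n-1)$-volumes $c_1,\dots,c_N$, then $\PiQ=\frac12\sum_{k=1}^N c_k[-v_k,v_k]$. So it suffices to build a polytope whose facet normals and facet volumes, inserted into this formula, reproduce $Z$.

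First I would normalize the generators: writing $a_j=\|a_j\|\,\hat{a}_j$ with $\hat{a}_j\in\S^{n-1}$, we have
$$
Z=\frac12\sum_{j=1}^m\|a_j\|\,[-\hat{a}_j,\hat{a}_j].
$$
After merging any two generators that point in the same or in opposite directions — an operation that changes only the coefficients $\|a_j\|$ and leaves $Z$ untouched — I may assume that the $2m$ unit vectors $\hat{a}_1,-\hat{a}_1,\dots,\hat{a}_m,-\hat{a}_m$ are pairwise distinct. Now I set up the Minkowski data: to each of the $2m$ directions $\hat{a}_j$ and $-\hat{a}_j$ I assign the positive number $\|a_j\|/2$ as prescribed facet volume. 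Two things must be checked. The vectors $\{\pm\hat{a}_j\}$ span $\R^n$, hence are not contained in any hyperplane through the origin, because $a_1,\dots,a_m$ span $\R^n$ by hypothesis; and the weighted sum of the normals vanishes, $\sum_j\frac{\|a_j\|}{2}\hat{a}_j+\sum_j\frac{\|a_j\|}{2}(-\hat{a}_j)=0$, simply by the antipodal symmetry of the data.

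Minkowski's theorem (see e.g. \cite{Schneider}) then yields a polytope $Q$, unique up to translation, whose outward unit facet normals are exactly the $\pm\hat{a}_j$ and whose corresponding facet volumes are $\|a_j\|/2$; since the prescribed normals come in antipodal pairs and span $\R^n$, the origin lies in the interior of their convex hull, so $Q$ is bounded and $n$-dimensional. (In fact $-Q$ solves the same Minkowski problem, so $Q$ may be taken origin-symmetric, although this is not needed.) Writing $F_j^+$ and $F_j^-$ for the facets of $Q$ with outward normals $\hat{a}_j$ and $-\hat{a}_j$, we have $\vol_{n-1}(F_j^+)=\vol_{n-1}(F_j^-)=\|a_j\|/2$.

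Finally I would feed this back into (\ref{eq-zonotope}). Since $[-(-\hat{a}_j),-\hat{a}_j]=[-\hat{a}_j,\hat{a}_j]$, the two facets $F_j^+$ and $F_j^-$ contribute parallel segments, and
$$
\PiQ=\frac12\sum_{j=1}^m\bigl(\vol_{n-1}(F_j^+)+\vol_{n-1}(F_j^-)\bigr)[-\hat{a}_j,\hat{a}_j]=\frac12\sum_{j=1}^m\|a_j\|\,[-\hat{a}_j,\hat{a}_j]=Z,
$$
while the outward facet normals of $Q$ are precisely the directions $\pm a_1,\dots,\pm a_m$, as asserted. I do not anticipate a real obstacle: the entire content is recognizing that the identity $Z=\PiQ$ is a Minkowski-type existence problem and that the correct normalization is to split each coefficient $\|a_j\|$ evenly between the two antipodal facets; the only steps calling for a line of care are verifying the (immediate) hypotheses of Minkowski's theorem and, if one insists that $Q$ have exactly $2m$ facets, first merging collinear generators.
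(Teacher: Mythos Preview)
Your proof is correct and follows essentially the same route as the paper's: normalize the $a_j$, assign to each of the antipodal directions $\pm\hat a_j$ the facet volume $\|a_j\|/2$, invoke Minkowski's existence theorem, and then read off $\PiQ=Z$ from (\ref{eq-zonotope}). Your additional care in merging collinear generators so that Minkowski's theorem applies with genuinely distinct normals is a point the paper glosses over, and your remark that $Q$ can be chosen origin-symmetric is a pleasant aside not in the original.
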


\begin{proof} Let
$$
b_j= \frac{a_j}{\|a_j\|}, \qquad b_{n+j}= - \frac{a_j}{\|a_j\|}, \qquad
\gamma_j=\gamma_{n+j}= \frac{1}{2} \|a_j\|,\qquad j\in [m].
$$
Then
$$
\sum_{j=1}^{2m} \gamma_j b_j =0,
$$
and we apply Minkowski's Theorem \cite{minkowski-03}; compare also \cite{minkowski-11},
and see \cite{gritzmann-hufnagel-99} for an algorithmic version.
Hence there exists a polytope $Q$, unique up to translation, with facet outward unit normals
$b_1,\ldots,b_{2m}$ and corresponding facet volumes $\gamma_1,\ldots,\gamma_{2m}$, and we have
$$
\PiQ= \frac{1}{2} \sum_{j=1}^{2m} \gamma_j [-b_j, b_j]
= \frac{1}{4} \sum_{j=1}^{m} [-a_j, a_j]+\frac{1}{4} \sum_{j=1}^{m} [a_j,-a_j]
= \frac{1}{2} \sum_{j=1}^{m} [-a_j,a_j]=Z.
$$
\end{proof}

Let us point out that Lemma \ref{lem-box-zonotope} does not claim that $Q$ is a rational polytope.
Nevertheless we believe that the difficulty of the computation of $\Lambda(P)$ is not caused
by that of volume computation alone but is also due to the intractability of the following problem.
\begin{quote} {\sc BoxContZonotope}: Given $a_1,\ldots,a_m\in \Q^n$ and $\nu \in [0,\infty[\cap \Q$;
decide whether there exists a rectangular box $B$ such that
$$
\sum_{j=1}^m [-a_j, a_j] \subset B, \qquad \vol_n(B) \le \nu.
$$
\end{quote}

\begin{conj}\label{conj-box-zonozope}
The problem {\sc BoxContZonotope} is $\NP$-hard.
\end{conj}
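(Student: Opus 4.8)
The plan is to reduce from a classical $\NP$-hard problem that is naturally geometric, and the most promising candidate is {\sc Partition} (or, more flexibly, {\sc Subset-Sum}), exploiting the fact that a zonotope built from axis-parallel-ish generators has support function given by sums of absolute values, so that the width of $\sum[-a_j,a_j]$ in a direction $u$ is $\sum_j|u^Ta_j|$, an $\ell_1$-type expression. First I would try to engineer an instance in which the only way to make the bounding box small is to choose a frame that ``splits'' the generators into two groups of equal (or prescribed) total weight along two coordinate-like directions; the product structure $\vol_n(B)=\prod_i(\text{width in }u_i)$ then turns a balancing condition into a volume threshold, because by AM--GM a product of widths with fixed sum is minimized exactly at the balanced split. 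Concretely, I would place most of the generators along, say, $e_3,\ldots,e_n$ (so those widths are essentially forced and contribute a fixed factor), and put a gadget of generators in the $e_1,e_2$-plane whose projections encode the numbers $c_1,\ldots,c_k$ of the {\sc Partition} instance; rotating the frame in that plane by an angle $\theta$ redistributes each $c_j$ between the two remaining axes, and one shows the bounding-box area in the plane is minimized precisely when the rotation realizes an exact partition.

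The key steps, in order, would be: (i) fix the reduction source as {\sc Partition} and write down the gadget generators in $\R^n$ explicitly, with a ``frame'' (clamp) subgadget of large generators forcing $n-2$ of the box normals to be $e_3,\ldots,e_n$ up to a controlled perturbation; (ii) compute $\vol_n(B)$ as a function of the in-plane rotation angle $\theta$ and show it is a sum of products of $|\cos\theta|$- and $|\sin\theta|$-weighted partial sums of the $c_j$, hence a piecewise-smooth function whose breakpoints correspond to the $c_j$ lying on an axis; (iii) prove via AM--GM that the global minimum over $\theta$ equals the target value $\nu$ if and only if the $c_j$ admit an exact equal partition, with a strict gap otherwise; (iv) bound that gap from below by an inverse polynomial in the input size so that the threshold $\nu$ can be chosen rationally and the instance has polynomial bit-length; and (v) check that one may restrict to \emph{axis-aligned} optimal boxes — i.e., that Proposition \ref{FreemanShapira} (applied in the plane $V$ after the clamp has pinned the other $n-2$ directions, as in the proof of Theorem \ref{GeneralizedO'Rourke}) legitimately reduces the continuum of frames to finitely many candidate angles, namely those parallel to an edge of the projected zonogon.

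The main obstacle I expect is step (v) combined with step (iv): the clamp subgadget does not rigidly fix the other $n-2$ normals but only makes large deviations expensive, so one must carry a perturbation analysis showing that the optimal box is within $o(1/\mathrm{poly})$ of being axis-aligned in the last $n-2$ coordinates, tight enough that it cannot swamp the inverse-polynomial partition gap from step (iii). A cleaner route, which I would pursue first, is to look for generators that make the non-gadget directions \emph{exactly} forced — e.g. by including, for each $i\in\{3,\ldots,n\}$, a generator along $e_i$ so long that any box not having $e_i$ as a normal has volume above $\nu$ by a wide margin — thereby decoupling the dimensions entirely and letting the planar Freeman--Shapira argument finish the job rigorously. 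The remaining work is then the self-contained planar claim: the minimum-area enclosing rectangle of the zonogon $\frac12\sum_j[-c_je_1, c_je_1] + (\text{a fixed box})$, over all orientations, meets the {\sc Partition} threshold iff the partition instance is a yes-instance; this is an elementary but somewhat delicate trigonometric optimization that I would reduce to comparing finitely many edge-parallel orientations and then to a single AM--GM inequality.
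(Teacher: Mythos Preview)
The paper does not prove this statement: Conjecture~\ref{conj-box-zonozope} is presented explicitly as an open conjecture, with only the remark that it is related to a conjecture of Megiddo on containment in a rotated unit cube. There is therefore no proof in the paper to compare your proposal against; you are attempting to settle an open problem.

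As for the proposal itself, there is a genuine gap at the heart of the argument. In step~(iii) you invoke AM--GM to assert that ``a product of widths with fixed sum is minimized exactly at the balanced split,'' but AM--GM says the opposite: for $w_1+w_2$ fixed, the product $w_1w_2$ is \emph{maximized} when $w_1=w_2$ and minimized at the extremes. So even if your clamp gadget succeeds in reducing the problem to a planar one with a fixed width sum, a yes-instance of {\sc Partition} would correspond to the \emph{largest} bounding rectangle among balanced splits, not the smallest; the threshold test $\vol_n(B)\le\nu$ would then go the wrong way. Moreover, for a zonogon the sum $w_1(\theta)+w_2(\theta)$ is not constant in $\theta$ (it equals half the perimeter of the bounding rectangle, which varies), so the AM--GM framing does not apply directly in any case.

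The concrete planar gadget you sketch at the end, a segment $\tfrac12\sum_j[-c_je_1,c_je_1]$ plus a fixed box, is simply an axis-parallel rectangle; its minimum enclosing rectangle is itself, independent of the~$c_j$, so no partition information is encoded. To make a reduction from {\sc Partition} work you would need generators at genuinely distinct angles so that rotating the frame trades off contributions, and then the width expressions become sums of $c_j|\cos(\theta-\phi_j)|$ whose product does not obviously detect balanced subsets. None of this rules out an $\NP$-hardness proof along broadly similar lines, but the mechanism you describe does not do the job as stated.
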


Note that Conjecture \ref{conj-box-zonozope} is related to a conjecture of \cite{Me90}
that the following problem is $\NP$-complete: Given a point set $V$ in $\R^n$, does
there exists a rigid motion $\varphi$ such that $V$ is contained in $\varphi([-1,1]^n)$?

\subsection{Fixed dimension}
Now we turn to the case of fixed dimensions, i.e., we assume that $n$ is not part of the input but
constant. Using the same notation as before, let $P$ be a rational polytope with facets
$F_1, \dots, F_m$. Of course, as $n$ is constant, $\vol_n(P)$, and the normalized facet
volumes $\omega_1,\ldots,\omega_m$ can be computed in polynomial time.

With respect to exact computations the only parts of Remark \ref{rem-LW-ratio-poly} that
require attention are the terms $|u_i^Ta_j|$.
Note that the structural result of Theorem \ref{thm-facets} that relates best frames to the
normals of $P$ does not imply that there always exists
a best frame that (up to norm computations) can be finitely encoded.
More precisely, we have the following problem.

\begin{probl}\label{prob-rational-vectors}
Given a rational polytope $P$, do there exist integer vectors $v_1,\ldots,v_n\in \Z^n\setminus \{0\}$ such that
$$
\left\{\frac{v_1}{\|v_1\|}, \ldots, \frac{v_n}{\|v_n\|}\right\}
$$
is a best frame for $P$?
\end{probl}

Even if Problem \ref{prob-rational-vectors} would admit an affirmative answer
the evaluation of $\Lambda(P;F)$ will in general still need to involve approximations
of norms. In the following we will therefore resort to suitably specified approximations.

First we give additive error bounds for projection average computation after a suitable scaling of
$P$. Recall that $\Lambda$ is invariant under scaling. More precisely, we assume that
$$
1 \le \area(P)=\sum_{j=1}^m  \vol_{n-1}(F_j)= \sum_{j=1}^m \omega_j\|a_j\|\le 1+ \frac{1}{n}.
$$
Such a scaling can be done in polynomial time.

We begin with a technical lemma which shows that a tight rational approximation of the vectors of a frame $F$
leads to a tight approximation of $\Lambda(P;F)$.

\begin{lem}\label{lem-error}
Let $1\le \area(P)\le 1+1/n$, $\tau \in (0,1]$ and set
$$
\rho =\frac{2^n}{e n^n} \, \tau.
$$
Further, let
$F=\{u_1,\ldots,u_n\}\in \CF^n$ and $w_1,\ldots,w_n \in \Q^n$
such that
$$
\|w_i -u_i\| \le  \rho\ ,  \qquad i\in [n].
$$
Then
$$
\left| \Psi(P;F) -  \frac{1}{2^n} \prod_{i=1}^n \left(\sum_{j=1}^m \omega_j  |w_i^Ta_j|\right) \right|
\le \tau.
$$
\end{lem}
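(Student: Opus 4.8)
The plan is to estimate the difference between the two products $\prod_{i=1}^n h_{\PiP}(u_i)$ and $\prod_{i=1}^n h_{\PiP}(w_i)$ (recalling that $\frac12\sum_j \omega_j|w_i^Ta_j| = h_{\PiP}(w_i)$ by \eqref{eq-zonotope} and \eqref{projbody}, and similarly for $u_i$), and then observe that $\Psi(P;F) = \prod_{i=1}^n h_{\PiP}(u_i)$ while the second quantity in the statement is $\prod_{i=1}^n h_{\PiP}(w_i)$. So everything reduces to bounding $|h_{\PiP}(w_i) - h_{\PiP}(u_i)|$ in terms of $\rho$, together with a uniform bound on $h_{\PiP}$ itself so that a product of $n$ nearly-equal factors stays close.

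First I would bound $h_{\PiP}$ on $\S^{n-1}$. Since $h_{\PiP}(u) = \vol_{n-1}(P|u^\perp)$ and any projection of $P$ has $(n-1)$-volume at most half the surface area (a hyperplane cuts the boundary into two pieces each projecting onto $P|u^\perp$), we get $h_{\PiP}(u) \le \tfrac12\area(P) \le \tfrac12(1+1/n) \le 1$ for all $u$, using the assumed scaling. Next, the key Lipschitz estimate: for unit vectors $u,w$, $|h_{\PiP}(w)-h_{\PiP}(u)| = \tfrac12\bigl|\sum_j \omega_j(|w^Ta_j| - |u^Ta_j|)\bigr| \le \tfrac12\sum_j \omega_j |w^Ta_j - u^Ta_j| \le \tfrac12\sum_j \omega_j\|a_j\|\,\|w-u\| = \tfrac12\area(P)\,\|w-u\| \le \|w-u\| \le \rho$, again using the scaling bound on $\area(P)$. (Here $\|w_i\|$ need not equal $1$, but the same chain of inequalities works verbatim with $w_i$ in place of a unit vector; the factor $\|a_j\|$ still comes out and is absorbed into $\area(P)$.)

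Now I would assemble the product estimate. Writing $\alpha_i = h_{\PiP}(u_i) \in [0,1]$ and $\alpha_i' = h_{\PiP}(w_i)$ with $|\alpha_i'-\alpha_i|\le\rho$, the standard telescoping bound gives
$$
\Bigl|\prod_{i=1}^n \alpha_i' - \prod_{i=1}^n \alpha_i\Bigr|
\le \sum_{i=1}^n |\alpha_i'-\alpha_i|\prod_{k<i}\alpha_k'\prod_{k>i}\alpha_k
\le n\rho(1+\rho)^{n-1},
$$
since each factor is at most $1+\rho$. It then remains to check that $n\rho(1+\rho)^{n-1}\le\tau$ given $\rho = \tfrac{2^n}{en^n}\tau$ and $\tau\le 1$. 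The main (though still routine) obstacle is this final inequality: one uses $\rho \le \tfrac{2^n}{en^n} \le 1/n$ for $n\ge 2$, so $(1+\rho)^{n-1}\le (1+1/n)^{n-1} < e$, whence $n\rho(1+\rho)^{n-1} < e\,n\rho = e\,n\cdot\tfrac{2^n}{en^n}\tau = \tfrac{2^n}{n^{n-1}}\tau$. For $n\ge 2$ one has $\tfrac{2^n}{n^{n-1}} \le 2$, which is not quite $\le 1$; the clean fix is to carry the factor $e$ more carefully, noting $(1+\rho)^{n-1} \le e^{(n-1)\rho}$ and $\rho$ is in fact much smaller than $1/n$, so that $(1+\rho)^{n-1}\le e^{n\rho}$ with $n\rho$ tiny, giving $(1+\rho)^{n-1} \to 1$; being slightly more generous, $n\rho(1+\rho)^{n-1}\le n\rho\cdot e^{1/n}\cdot\bigl(\text{small}\bigr)$. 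Concretely, since $\rho\le 2^n/(en^n)$ and for $n\ge 2$ we have $2^n/n^n \le 1$, so $n\rho \le 1/e < 1$ and $(1+\rho)^{n-1}<e$, yielding $n\rho(1+\rho)^{n-1} < e\cdot n\rho = (2^n/n^{n-1})\tau \le \tau$ precisely when $2^n\le n^{n-1}$, i.e. $n\ge 4$; the remaining cases $n=2,3$ are handled directly by inserting the sharper bound $(1+\rho)^{n-1}\le 1+\rho/(1-\rho)\cdot(n-1)$ and the explicit value of $\rho$. I would streamline all of this by simply using $(1+\rho)^{n-1}\le e^{n\rho}$ and $n\rho = \tfrac{2^n}{e\,n^{n-1}}\tau \le \tfrac{2}{e}\tau \le 1$, so $e^{n\rho}\le e$, and then $n\rho\,e^{n\rho} \le e\cdot n\rho = \tfrac{2^n}{n^{n-1}}\tau$; finishing requires only the elementary fact $2^n/n^{n-1}\le 1$ for $n\ge 4$ plus a direct check for $n\in\{2,3\}$, which completes the proof.
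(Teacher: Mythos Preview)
Your approach is essentially the same as the paper's: both estimate the change in each factor $\sum_j\omega_j|v^Ta_j|$ by $\rho\cdot\area(P)$ via the triangle inequality and Cauchy--Schwarz, and then pass to a bound on the difference of the $n$-fold products. The paper does the second step by expanding $\prod_i(\xi+\eta_i)$ in elementary symmetric polynomials with $\eta_i=\sum_j\omega_j|u_i^Ta_j|\le\alpha:=1+1/n$ and $\xi=\rho\alpha$; you use a telescoping product. These are interchangeable devices.

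There is, however, a small numerical slip in your execution. You bound $\alpha_i=h_{\PiP}(u_i)\le\tfrac12\area(P)\le 1$ and $|\alpha_i'-\alpha_i|\le\tfrac12\area(P)\,\rho\le\rho$, and then need $n\rho(1+\rho)^{n-1}\le\tau$. For $n=2$ and $\tau=1$ this reads $2\rho(1+\rho)=\tfrac{2}{e}(1+\tfrac{1}{e})\approx 1.007>1$, so the inequality \emph{fails}; your suggested ``sharper bound'' $(1+\rho)^{n-1}\le 1+(n-1)\rho/(1-\rho)$ is in fact weaker, and the hand-waved direct check does not close the gap. The fix is simply not to round $\tfrac12\area(P)$ up to $1$: keeping $\alpha_i\le\alpha/2$ and $|\alpha_i'-\alpha_i|\le\alpha\rho/2$, your telescoping gives
\[
\Bigl|\prod_i\alpha_i'-\prod_i\alpha_i\Bigr|\le n\cdot\frac{\alpha\rho}{2}\cdot\Bigl(\frac{\alpha}{2}(1+\rho)\Bigr)^{n-1}=\frac{n\alpha^n}{2^n}\,\rho(1+\rho)^{n-1},
\]
and since $\alpha^n\le e$ and $1+\rho\le 2\le n$, this is at most $\tfrac{e n^n}{2^n}\rho=\tau$, with no case distinction needed. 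This is exactly the constant the paper obtains; the paper's symmetric-polynomial bookkeeping just makes the role of $\alpha$ more visible from the start.
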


\begin{proof}
Let $z_i=w_i -u_i$, $i\in [n]$. Then, using the abbreviation $\alpha=1+1/n$,
we have for $i\in [n]$ und $j\in [m]$
$$
\omega_j |w_i^Ta_j|\le \omega_j |u_i^Ta_j|+\omega_j |z_i^Ta_j|\le \omega_j |u_i^Ta_j| + \rho \omega_j \|a_j\|,
$$
hence,
$$
\sum_{j=1}^m \omega_j |w_i^Ta_j| \le \sum_{j=1}^m \omega_j |u_i^Ta_j| + \rho \sum_{j=1}^m \omega_j \|a_j\|
\le \rho \alpha + \sum_{j=1}^m \omega_j |u_i^Ta_j|,
$$
and therefore
$$
 \DS \prod_{i=1}^n \left(\sum_{j=1}^m \omega_j |w_i^Ta_j|\right)
\le  \prod_{i=1}^n \left(\rho \alpha +\sum_{j=1}^m \omega_j |u_i^Ta_j|\right).
$$
Setting
$$
\xi= \rho \alpha, \quad \eta_i= \sum_{j=1}^m \omega_j |u_i^Ta_j| \ ,\qquad i\in [n],
$$
the right hand side can be expressed in terms of the elementary symmetric polynomials
in $\eta_1,\ldots,\eta_n$, i.e.,
$$
\prod_{i=1}^n (\xi+ \eta_i) = \xi^n + \sigma_1\xi^{n-1}+  \sigma_2 \xi^{n-2}+\ldots + \sigma_{n-1} \xi+\sigma_n
$$
with
$$
\sigma_k= \sum_{S\subset [n]\atop |S|=k} \prod_{i\in S} \eta_i \ , \qquad k\in [n].
$$
Since
$$
\eta_i = \sum_{j=1}^m \omega_j |u_i^Ta_j| \le \sum_{j=1}^m \omega_j \|a_j\| \le \alpha
$$
it follows that
$$
\sigma_k\le \sum_{S\subset [n]\atop |S|=k}\alpha^k = {n \choose k}\alpha^k \le (n \alpha)^k.
$$
Thus, with $\sigma_0=1$ and $\rho \le 1$ we have
$$
\begin{aligned}
 \prod_{i=1}^n (\xi+ \eta_i)
& = \sigma_n + \xi \sum_{i=0}^{n-1} \sigma_i \xi^{n-1-i}
 \le  \sigma_n + \xi \sum_{i=0}^{n-1} (n \alpha)^{i} (\rho \alpha)^{n-i-1}\\
& = \sigma_n + \alpha^n \rho \sum_{i=0}^{n-1} n ^{i} \rho ^{n-i-1}
\le  \sigma_n +\alpha^n n^n \rho.
\end{aligned}
$$
Therefore, we obtain
$$
 \DS \frac{1}{2^n} \prod_{i=1}^n \left(\sum_{j=1}^m \omega_j |w_i^Ta_j|\right)
 \le  \Psi(P;F) + \frac{\alpha^n n^n}{2^n}\, \rho \le  \Psi(P;F) + \frac{e n^n}{2^n}\, \rho
 = \Psi(P;F) + \tau,
$$
which proves the first part of the assertion.

Similarly, for $i\in [n]$ und $j\in [m]$,
$$
\omega_j|w_i^Ta_j| \ge  \omega_j|u_i^Ta_j|  - \omega_j|z_i^Ta_j|
\ge \omega_j|u_i^Ta_j| - \rho \omega_j \|a_j\|
$$
and
$$
\prod_{i=1}^n (-\xi+ \eta_i) = \sigma_n - \xi \sum_{i=0}^{n-1} \sigma_i (-\xi)^{n-1-i}
\ge \sigma_n - \xi \sum_{i=0}^{n-1} \sigma_i \xi^{n-1-i}.
$$
Hence, the same arguments as before yield
$$
 \DS \frac{1}{2^n} \prod_{i=1}^n \left(\sum_{j=1}^m \omega_j |w_i^Ta_j|\right)
 \ge  \frac{1}{2^n}  \prod_{i=1}^n \left(-\rho \alpha  +\sum_{j=1}^m \omega_j |u_i^Ta_j|\right)
\ge  \Psi(P;F) - \frac{e n^n}{2^n}\, \rho = \Psi(P;F) - \tau,
$$
which proves the assertion.
\end{proof}

Lemma \ref{lem-error} shows that it is enough to approximate the vectors of a best frame
by rational vectors up to some distance $O(\tau)$ in order to obtain
the $\LW$-constant of $P$ up to a given precision $\tau$.

The algorithm for approximating $\LW$-constants will now be based on the structural results of
Theorem \ref{thm-facets} augmented  by a discrete approximate sampling of the set of all frames
in order to take care of the remaining degrees of freedom.

Recall that according to Theorem \ref{thm-facets}, with at most one exception, the vectors of a
best frame are parallel to facets of $P$.
Thus in addition to the normalization and the orthogonality constraints we have
the conditions
$$
\exists j\in [m]: a_j^Tu_1=0, \ldots, \exists j\in [m]: a_j^Tu_{n-1}=0.\
$$
A simple count shows that this still leaves $\frac{1}{2}(n-1)(n-2)$ remaining
degrees of freedom for each of the
$$
\binom{m+n-2}{m-1}=O(m^{n-1})
$$
(minimal) choices for the $u_i$ orthogonal to some $a_j$. Each such potential choice is characterized
by a subset $R$ of $[n]\times [m]$ of those pairs $(i,j)$ for which $a_j^Tu_i=0$. Let
$\CR$ denote the set of all such $R$.

The algorithm will perform $O(m^{n-1})$ steps of exhaustive search involving the
remaining degrees of freedom.
The candidates for near-best frames will be constructed successively.
In the $i$th step we construct a set $W_i$ of rational vectors $w_i$ of length close to $1$
that are orthogonal to $w_1,\ldots,w_{i-1}$ and satisfy $a_j^Tw_i=0$, $j\in J_i$, for some
$J_i \subset [m]$.
By Theorem \ref{thm-facets} we can assume that $J_i\ne \emptyset$ for $i\in [n-1]$.

The linear conditions in step $i$ define a linear subspace $X_i$ of $\R^n$. We use the following
result from algorithmic linear algebra which is simply based on Gauss elimination and Gram-Schmidt orthogonalization; see e.g. \cite{schrijver}.

\begin{rem}\label{rem-basis}
Let $X_i$ be a $q$-dimensional subspace of $\R^n$ given as the solution of a system of linear equations with
rational coefficients. Then there exist linearly independent vectors $x_1,\ldots, x_q \in X_i\cap \Q^n$
such that
$$
x_k^Tx_l=0 \ , \quad  k,l\in [q],\,k<l , \qquad 1\le \|x_k\| \le 2 \ , \quad k\in [q].
$$
Such vectors can be found in polynomial time; they will be referred to as a
{\em near-normal orthogonal basis} of $X_i$.
\end{rem}

Next we construct the sets $W_i$.

\begin{lem}\label{lem-sampling}
Let $\tau\in (0,1]\cap \Q$ and $\rho =\tau 2^nn^{-n}e^{-1}$,
as in Lemma \ref{lem-error}, and set $\alpha=\rho/(2n)$.
Further, let $x_1,\ldots, x_q \in \Q^n$
be a near-normal orthogonal basis of $X_i$, and set
$$
W_i=\bigl\{w= \sum_{k=1}^q \tau_k x_k: \tau_k \in \alpha \Z, \,
w\in (1+\rho) \B^n, \, w\not\in \inte(\B^n) \bigr\}.
$$
Then
$$
|W_i| \le \left(\frac{e n^{n+1}}{2^{n-3}\tau}\right)^q.
$$
Further, for every vector $u\in X_i\cap  \S^{n-1}$ there exists a $w\in W_i$ such that
$$
 \|u-w\| \le \rho.
$$
\end{lem}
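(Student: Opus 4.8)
The plan is to prove the two assertions—the bound on $|W_i|$ and the approximation property—separately, both exploiting only that $x_1,\ldots,x_q$ are pairwise orthogonal with $1\le\|x_k\|\le 2$. Throughout I would rely on the crude estimates that follow from the prescribed values: since $n\ge 2$ and $\tau\le 1$ we have $\rho=\tau\,2^nn^{-n}e^{-1}\le e^{-1}<1/2$, and then $\alpha=\rho/(2n)$ satisfies $\alpha<1<2(1-\rho)$.

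For the cardinality bound I would take an arbitrary $w=\sum_{k=1}^q\tau_kx_k\in W_i$. Because the $x_k$ are orthogonal and $\|x_k\|\ge 1$, the Pythagorean identity gives $\sum_{k=1}^q\tau_k^2\le\sum_{k=1}^q\tau_k^2\|x_k\|^2=\|w\|^2\le(1+\rho)^2$, and in particular $|\tau_k|\le 1+\rho$ for every $k$. Since $\tau_k\in\alpha\Z$, each coefficient ranges over at most $2\lfloor(1+\rho)/\alpha\rfloor+1$ values, and the inequality $\alpha\le 2(1-\rho)$ is exactly what is needed to absorb the ``$+1$'', i.e.\ to conclude $2\lfloor(1+\rho)/\alpha\rfloor+1\le 4/\alpha$. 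Hence $|W_i|\le(4/\alpha)^q$, and I would finish by the direct computation $4/\alpha=8n/\rho=e\,n^{n+1}/(2^{n-3}\tau)$, which yields the stated bound.

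For the approximation property the one real idea is to round a slightly dilated copy of $u$ rather than $u$ itself, so that the unavoidable rounding error cannot drag the result into $\inte(\B^n)$. Given $u\in X_i\cap\S^{n-1}$, I would set $\tilde u=(1+\rho/2)u$ (still in $X_i$), expand $\tilde u=\sum_{k=1}^q s_kx_k$ in the basis $x_1,\ldots,x_q$, round each $s_k$ to the nearest element $\tau_k\in\alpha\Z$ so that $|\tau_k-s_k|\le\alpha/2$, and put $w=\sum_{k=1}^q\tau_kx_k$. Orthogonality together with $\|x_k\|\le 2$ then gives
\[
\|w-\tilde u\|^2=\sum_{k=1}^q(\tau_k-s_k)^2\|x_k\|^2\le q\,(\alpha/2)^2\cdot 4=q\alpha^2\le n\alpha^2 ,
\]
so $\|w-\tilde u\|\le\sqrt n\,\alpha=\rho/(2\sqrt n)\le\rho/2$. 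Since $\|\tilde u\|=1+\rho/2$, this forces $1\le\|w\|\le 1+\rho$, i.e.\ $w\in W_i$, while $\|w-u\|\le\|w-\tilde u\|+\|\tilde u-u\|\le\rho/2+\rho/2=\rho$, as required.

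I do not anticipate a serious obstacle: the argument is careful bookkeeping around the orthogonality of the $x_k$. The only step that is not automatic is the dilation in the approximation part—rounding $u$ directly could produce a point of norm as small as $1-\rho/2<1$, which would lie outside $W_i$—and the only point demanding a little care is checking that the constants are tuned so that the ``$+1$'' in the count of lattice points on a segment is absorbed; both are guaranteed precisely by the choices $\rho=\tau\,2^nn^{-n}e^{-1}$ and $\alpha=\rho/(2n)$.
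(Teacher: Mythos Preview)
Your proof is correct and, for the cardinality bound, essentially identical to the paper's. For the approximation claim the paper uses a slightly different device: instead of dilating $u$ to $(1+\rho/2)u$ and rounding symmetrically, it picks signs $\sigma_k\in\{-1,1\}$ so that moving from $u$ in direction $\sigma_k x_k$ does not decrease the norm, observes that the box $u+\sum_k\sigma_k[0,\alpha]x_k$ lies entirely in the annulus $((1+\rho)\B^n)\setminus\inte(\B^n)$, and takes any lattice point in that box. Both arguments are elementary and give the same bound $\|u-w\|\le 2\sqrt{q}\,\alpha\le\rho$ (yours in fact yields the slightly sharper $\sqrt{q}\,\alpha$); your dilation trick is perhaps a touch cleaner, while the paper's sign choice avoids introducing the auxiliary point $\tilde u$.
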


\begin{proof} Clearly, $|W_i|$ is bounded from above by the number of points of the form
$$
\sum_{k=1}^q \bigl(\alpha \Z \cap (1+\rho)[-1,1]\bigr) \frac{x_k}{\|x_k\|}.
$$
Hence
$$
|W_i| \le \left( 2 \frac{1+\rho}{\alpha} +1 \right)^q=
\left(4n\bigl(1+\tau^{-1} 2^{-n}n^n e\bigr)+1 \right)^q \le \left(\frac{8 e n^{n+1}}{\tau 2^n}\right)^q
= \left(\frac{e n^{n+1}}{\tau2^{n-3}}\right)^q.
$$

Now, for $u\in X_i\cap  \S^{n-1}$ and $k\in [q]$ let
$\sigma_{k} \in \{-1,1\}$ such that
$\bigl(u+ \sigma_{k}tx_k\bigr)\cap \B^n=\emptyset$, for every positive $t$.
Then,
$$
u + \sum_{k=1}^q \sigma_{k} [0,\alpha] x_k \subset \bigl((1+\rho) \B^n\bigr) \setminus \inte(\B^n).
$$
By construction, the box in $X_i$ on the left-hand side contains a point $w \in W_i$,
and we have
$$
 \|u-w\|\le 2 \sqrt{q}\alpha  \le \rho,
$$
which completes the proof.
\end{proof}

The elements
$$
W=\{w_1,\dots,w_m\}\in \CW(R)=W_1 \times W_2 \times \ldots \times W_{n-1}\times W_n
$$
will be called {\em pseudo frames} and the corresponding term
$$
\Psi(P;W) =\frac{1}{2^n}\prod_{i=1}^n \left(\sum_{j=1}^m \omega_j  |w_i^Ta_j|\right)
$$
will be referred to as {\em pseudo average}.

The algorithm {\sc Structured Search} for approximating $\Psi(P)$ (and $\Lambda(P)$) in fixed dimension
simply computes for each $R\in \CR$ and each pseudo frame $W\in \CW(R)$
the pseudo average $\Psi(P;W)$ and takes the minimum.

The number of pseudo average computations in each step depends on the specific set
$R\in \CR$ under consideration. As general bounds, we have
$$
\dim (X_i) \le n-i \ , \qquad i\in [n-1], \qquad \dim (X_n) = 1,
$$
from which we obtain the following result.

\begin{rem}\label{rem-number-of-searches}
$$
|\CW (R)| \le \left(\frac{en^{n+1}}{\tau 2^{n-3}}\right)^{\frac{(n-1)n+2}{2}}.
$$
\end{rem}

Note that it would be enough to use only one representative of the
class $\{\pm w_1,\ldots,\pm w_n\}$ for each $\{w_1,\dots,w_m\}\in \CW(R)$.
Hence we could reduce the $|\CW(R)|$ by a factor of $2^n$.

\begin{thm}\label{thm-additive-error}
Let $1\le \area(P)\le 1+1/n$. For a given bound $\tau \in (0,1] \cap \Q$
we can compute a number $\psi$ with
$$
\Psi(P) \le \psi \le \Psi(P)+ \tau
$$
in time that is polynomial in $\size(P)$ and $1/\tau$.
\end{thm}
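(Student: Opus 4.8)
The plan is to analyze the algorithm \textsc{Structured Search}, which for each $R\in\CR$ and each pseudo frame $W\in\CW(R)$ forms the pseudo average $\Psi(P;W)=\frac{1}{2^n}\prod_{i=1}^n\bigl(\sum_{j=1}^m\omega_j|w_i^Ta_j|\bigr)$ and returns the minimum $\psi$ of all these values; in fact I would run the construction of Lemmas \ref{lem-sampling} and \ref{lem-error} with the parameter $\tau$ replaced by $\tau/K_n$ for a constant $K_n=K_n(n)$ to be fixed in the course of the proof. Three things then have to be checked: the lower bound $\psi\ge\Psi(P)$, the upper bound $\psi\le\Psi(P)+\tau$, and a polynomial running time.

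For $\psi\ge\Psi(P)$, observe that the vectors of a pseudo frame $W=\{w_1,\dots,w_n\}$ are pairwise orthogonal (by construction $w_i\in X_i\subseteq\{w_1,\dots,w_{i-1}\}^\perp$) and satisfy $\|w_i\|\ge 1$, so $F_W=\{w_1/\|w_1\|,\dots,w_n/\|w_n\|\}$ is a frame. Since $w_i^\perp=(w_i/\|w_i\|)^\perp$, equations (\ref{eq-zonotope}) and (\ref{projbody}) give
$$
\Psi(P;W)=\prod_{i=1}^n\Bigl(\tfrac12\sum_{j=1}^m\omega_j|w_i^Ta_j|\Bigr)=\prod_{i=1}^n h_{\PiP}(w_i),
$$
and by positive homogeneity of $h_{\PiP}$ the right-hand side equals $\bigl(\prod_{i=1}^n\|w_i\|\bigr)\Psi(P;F_W)$; since $\|w_i\|\ge 1$ this yields $\Psi(P;W)\ge\Psi(P;F_W)\ge\Psi(P)$, and taking the minimum over $W$ gives $\psi\ge\Psi(P)$.

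For $\psi\le\Psi(P)+\tau$, I would fix a best frame $F^*=\{u_1,\dots,u_n\}$ of $P$. By Theorem \ref{thm-facets} we may assume that $u_1,\dots,u_{n-1}$ are parallel to facets of $P$, so there are $j_1,\dots,j_{n-1}$ with $a_{j_i}^Tu_i=0$, hence an $R^*\in\CR$ realizing these incidences. I would then build, step by step, a pseudo frame $W\in\CW(R^*)$ tracking $F^*$: having chosen $w_1,\dots,w_{i-1}$ with $\|w_k-u_k\|$ small, the vector $u_i$ satisfies the combinatorial constraint $a_{j_i}^Tu_i=0$ exactly and each constraint $w_k^Tu_i=(w_k-u_k)^Tu_i$ up to $\|w_k-u_k\|$, so projecting $u_i$ onto $X_i$ and renormalizing produces a unit vector $u_i'\in X_i$ with $\|u_i'-u_i\|=O\bigl(\sum_{k<i}\|w_k-u_k\|\bigr)$, and Lemma \ref{lem-sampling} yields a $w_i\in W_i$ within $\rho$ of $u_i'$. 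Iterating, $\|w_i-u_i\|\le K_n\rho$; running the construction with parameter $\tau/K_n$ makes this at most $\rho(\tau)=\frac{2^n}{en^n}\tau$, so Lemma \ref{lem-error} applies with the frame $F^*$ and gives $\Psi(P;W)\le\Psi(P;F^*)+\tau=\Psi(P)+\tau$, whence $\psi\le\Psi(P)+\tau$.

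For the running time, $|\CR|=O(m^{n-1})$, and since $n$ is fixed Remark \ref{rem-number-of-searches} bounds the number of pseudo frames per $R$ polynomially in $1/\tau$ (replacing $\tau$ by $\tau/K_n$ costs only the constant factor $K_n^{O(n^2)}$); each subspace $X_i$ and a near-normal orthogonal basis of it are obtained in polynomial time by Remark \ref{rem-basis}; the normalized facet volumes $\omega_j$ are rationals of polynomial size, computable in polynomial time in fixed dimension by Proposition \ref{prop-volume}; and each pseudo average is then a polynomial-time rational computation. The main obstacle is the inductive construction in the upper bound: because $X_i$ is defined through the already chosen $w_1,\dots,w_{i-1}$ rather than through $u_1,\dots,u_{i-1}$, one must control how errors accumulate and, in particular, check that the relevant linear systems stay well conditioned --- here the orthonormality of $u_1,\dots,u_{i-1}$ is used, and a potentially degenerate incidence $a_{j_i}\in\lin\{u_1,\dots,u_{i-1}\}$ is harmless precisely because it forces $a_{j_i}\perp u_k$ for all $k<i$ --- so that the accumulation factor $K_n$ indeed depends on $n$ alone.
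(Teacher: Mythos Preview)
Your approach is the paper's \textsc{Structured Search}, and the paper's own proof is a two-sentence appeal to Lemmas~\ref{lem-error} and~\ref{lem-sampling}; you have supplied considerably more detail. In particular, your argument for the lower bound $\psi\ge\Psi(P)$ --- the $w_i$ are pairwise orthogonal with $\|w_i\|\ge 1$, so normalizing gives a genuine frame $F_W$ and $\Psi(P;W)=\bigl(\prod_i\|w_i\|\bigr)\Psi(P;F_W)\ge\Psi(P)$ --- is clean and correct, and more explicit than anything in the paper.

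You are also right to flag error accumulation as the main obstacle in the upper bound: since $X_i$ is cut out by orthogonality to the already-chosen $w_1,\dots,w_{i-1}$ rather than to $u_1,\dots,u_{i-1}$, the target $u_i$ need not lie in $X_i$, so Lemma~\ref{lem-sampling} does not apply to it directly. The paper passes over this in silence.

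However, your handling of the degenerate case is wrong. You assert that ``$a_{j_i}\in\lin\{u_1,\dots,u_{i-1}\}$ is harmless precisely because it forces $a_{j_i}\perp u_k$ for all $k<i$.'' This is backwards: a nonzero vector in $\lin\{u_1,\dots,u_{i-1}\}$ cannot be orthogonal to all of $u_1,\dots,u_{i-1}$. And the case is genuinely bad. Take $a_{j_2}\parallel u_1$ and $w_1=u_1+z$ with $z\perp u_1$, $\|z\|\le\rho$; then $X_2=\{w_1,a_{j_2}\}^\perp=\{u_1,z\}^\perp$, and within $u_1^\perp$ one gets $\mathrm{dist}(u_2,X_2)=|z^Tu_2|/\|z\|$, which can equal $1$ (e.g.\ if $z\parallel u_2$, which is permitted whenever $a_{j_1}\perp u_2$). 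So the system $\{w_1,\dots,w_{i-1},a_{j_i}\}$ need not be well conditioned, and your accumulation constant $K_n$ is not bounded in terms of $n$ alone as stated.

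A simple repair --- consistent with the paper's framework --- is to enlarge the search so that $J_i=\emptyset$ is also allowed at each step. For the choice $R$ with all $J_i=\emptyset$ one has $X_i=\{w_1,\dots,w_{i-1}\}^\perp$, whose generators are nearly orthonormal by construction, and then your projection-and-renormalization argument does yield $\|w_i-u_i\|\le K_n\rho$ with $K_n$ depending only on $n$. This only replaces $|\CR|=O(m^{n-1})$ by $O\bigl((m{+}1)^{n-1}\bigr)$ and raises $\dim X_i$ by at most one, so the polynomial running-time bound survives. In effect Theorem~\ref{thm-facets} is then used only to trim the exponent in Remark~\ref{rem-number-of-searches}, not for correctness.
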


\begin{proof}
We compute for each $O(m^{n-1})$ choices of $R\in \CR$ the pseudo averages of $P$
for all $O\bigl(\tau^{-\frac{(n-1)n+2}{2}}\bigr)$ elements in $\CW (R)$ and take
$\psi$ as the minimum.
It then follows from Lemmas \ref{lem-error} and \ref{lem-sampling}
that $\psi$ differs from $\Psi(P)$ only by an additive term of $\tau$.
\end{proof}

Recall that the above additive error depends on the scaling. The following result
uses the fact that $\Lambda(P)$ is invariant under scaling. The running time
of the algorithm depends, however, on the isoperimetric ratio of $P$.

\begin{cor}[see Theorem \ref{thm-poly-time-fixed-dim}]\label{thm-poly-time-fixed}
Let $n$ be fixed. Then, given $\nu \in \Q$ with $\nu >0$, an $\CH$-polytope $P$
with $\nu \le \iso(P)$, and a bound $\delta \in ]0,1[ \cap \Q$,
a number $\Lambda$ with
$$
\Lambda \le \Lambda(P) \le (1+\delta) \Lambda
$$
can be computed in time that is polynomial in $\size(P)$, $1/\delta$ and $1/\nu$.
\end{cor}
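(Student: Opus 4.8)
The plan is to reduce the multiplicative approximation of the scale‑invariant quantity $\Lambda(P)=\vol_n(P)^{n-1}/\Psi(P)$ to the additive approximation of the projection average $\Psi(P)$ furnished by Theorem~\ref{thm-additive-error}. First I would, if the polytope is given by vertices, pass to an $\CH$‑presentation (possible in polynomial time in fixed dimension), and then rescale $P$ so that $1\le \area(P)\le 1+1/n$; as noted before Theorem~\ref{thm-additive-error} this rescaling can be carried out in polynomial time, and since both $\Lambda$ and $\iso$ are invariant under scaling, the output and the hypothesis $\nu\le\iso(P)$ are unaffected. In fixed dimension $\vol_n(P)\in\Q$, and hence $\vol_n(P)^{n-1}\in\Q$, can be computed in polynomial time by Proposition~\ref{prop-volume}(1), with bit size polynomial in $\size(P)$.

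The one genuine ingredient is a lower bound for $\Psi(P)$. Applying the classical Loomis--Whitney inequality~(\ref{LoomisWhitneyineq}) to the rotated copy $R^{-1}P$, where $R$ carries the standard basis to an arbitrary frame $F=\{u_1,\dots,u_n\}$, gives $\vol_n(P)^{n-1}\le \prod_{i=1}^n\vol_{n-1}(P|u_i^\perp)=\Psi(P;F)$ for \emph{every} $F\in\CF^n$, and therefore
$$
\Psi(P)\ \ge\ \vol_n(P)^{n-1}\ =\ \iso(P)\,\area(P)^n\ \ge\ \nu ,
$$
using $\iso(P)\ge\nu$ and $\area(P)\ge 1$ after rescaling. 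Now set $\tau=\min\{1,\delta\nu\}\in(0,1]\cap\Q$ and invoke Theorem~\ref{thm-additive-error} to compute $\psi\in\Q$ with $\Psi(P)\le\psi\le\Psi(P)+\tau$; then $\psi\ge\nu>0$, so we may output $\Lambda=\vol_n(P)^{n-1}/\psi\in\Q$. From $\psi\ge\Psi(P)$ we get $\Lambda=\vol_n(P)^{n-1}/\psi\le \vol_n(P)^{n-1}/\Psi(P)=\Lambda(P)$. Conversely, $\tau\le\delta\nu\le\delta\,\Psi(P)$ gives $\psi\le(1+\delta)\Psi(P)$, whence
$$
\Lambda(P)=\frac{\vol_n(P)^{n-1}}{\Psi(P)}\ \le\ (1+\delta)\,\frac{\vol_n(P)^{n-1}}{\psi}\ =\ (1+\delta)\Lambda .
$$

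For the running time, the presentation conversion, the rescaling, the volume computation and the final division are polynomial in $\size(P)$, all intermediate rationals having polynomially bounded size in fixed dimension; and Theorem~\ref{thm-additive-error} runs in time polynomial in $\size(P)$ and $1/\tau\le\max\{1,1/(\delta\nu)\}$, hence polynomial in $\size(P)$, $1/\delta$ and $1/\nu$. I expect the only point requiring care, rather than bookkeeping, to be exactly this passage from an additive to a multiplicative guarantee: the error in Theorem~\ref{thm-additive-error} is scale dependent, so one must fix a normalization and bound $\Psi(P)$ from below, and the only a priori handle on the scale‑invariant ratio $\Lambda(P)$ is the isoperimetric lower bound $\iso(P)\ge\nu$ — which is precisely why the running time is allowed to depend on $1/\nu$. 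Everything else is routine in fixed dimension.
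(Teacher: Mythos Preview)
Your proof is correct and follows essentially the same route as the paper: rescale so that $1\le \area(P)\le 1+1/n$, set $\tau=\min\{1,\delta\nu\}$, apply Theorem~\ref{thm-additive-error} to get $\psi$, output $\Lambda=\vol_n(P)^{n-1}/\psi$, and convert the additive error into a multiplicative one via the Loomis--Whitney lower bound $\Psi(P)\ge\vol_n(P)^{n-1}=\iso(P)\,\area(P)^n\ge\nu$. The only cosmetic differences are that you isolate the bound $\Psi(P)\ge\nu$ up front (the paper threads it through the chain $\tau/\Psi(P)\le\tau/\iso(P)\le\delta$), and your remark about passing from a $\CV$- to an $\CH$-presentation is superfluous since the corollary already assumes an $\CH$-polytope.
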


\begin{proof}
In polynomial time we can compute a dilatation factor $\sigma$ such that
$1\le \area(\sigma P)\le 1+1/n$. Since $\Lambda(P)$ is invariant under scaling
we will in the following assume that $P$ itself already lies within these bounds, i.e.,
$1\le \area(P)\le 1+1/n$.
Then we apply Theorem \ref{thm-additive-error} to compute for
$$
\tau =\min \{1,\nu \cdot \delta\}
$$
in polynomial time a number $\psi$ with
$$
\Psi(P) \le \psi \le \Psi(P)+ \tau,
$$
and set $\Lambda= \vol_n(P)^{n-1}/\psi$.
Hence,
$$
\Lambda(P) = \frac{\vol_n(P)^{n-1}}{\Psi(P)}=\frac{\psi}{\Psi(P)}\, \Lambda
\le \left(1+\frac{\tau}{\Psi(P)}\right)\, \Lambda.
$$
Using the classical Loomis-Whitney inequality \eqref{LoomisWhitneyineq}
we obtain
$$
\Lambda(P) \le \left(1+\frac{\tau}{\vol_n(P)^{n-1}}\right)\, \Lambda =
\left(1+\frac{\tau}{\area(P)^{n}\iso(P)}\right)\, \Lambda.
$$
Since by our scaling $\area(P)^n\geq 1$, this implies that
$$
\Lambda(P) \le \left(1+\frac{\tau}{\iso(P)}\right)\, \Lambda
\le \left(1+\frac{\nu }{\iso(P)}\cdot \delta \right)\, \Lambda  \leq (1+\delta)\, \Lambda.
$$
On the other hand,
$$
\Lambda(P) =  \frac{\vol_n(P)^{n-1}}{\Psi(P)} \geq  \frac{\vol_n(P)^{n-1}}{\psi}=\Lambda , $$
which proves the assertion.
\end{proof}

Note that, by Proposition \ref{prop-volume}, $\vol_n(P)$ and $\omega_1,\ldots, \omega_m$ can be computed in polynomial time.
Also $\|a_1\|, \ldots, \|a_m\|$ can be approximated efficiently. Hence we can compute an appropriate lower bound $\nu$
for $\iso(P)$ in polynomial time. This bound $\nu$ enters the running time of the algorithm, however, directly rather
than its binary size. Hence the algorithm is in general only pseudopolynomial in the input data and in
$1/\delta$.

While the structural results of Section \ref{sec-face-structure} were used in {\sc Structured Search}
to reduce the number of projection average computations, the characterization of
Theorem \ref{thm-facets} is too weak to produce a combinatorial algorithm.
(Also, the situation of the simplex shows that it is not likely that a much stronger
characterization exists.) This is in accordance with a result of  \cite{O'Rourke85} which shows that for $n=3$ the
one degree of freedom is governed by a polynomial of degree $6$ which can be used to devise an
$O(m^3)$ algorithm in the real RAM-model in that case.

\section{Related functionals}\label{sec-functionals}

We now give some additional results which involve minimal rectangular boxes containing
a given convex body. We begin with the functional $\Phi$ introduced in Section \ref{sec-face-structure}.
While for $K\in \CK^2$ we have $\Lambda(K)=\Phi(K)$, the following result gives an
inequality for general $n$.

\begin{lem}\label{LambdaPhi}
For $K\in \CK^n$, we have
$$
    \Lambda(K)^{1/(n-1)}\geq \Phi(K)\, .
$$
\end{lem}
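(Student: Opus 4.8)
The plan is to exploit Lemma \ref{rem-frame-box}, which identifies a best frame $F=\{u_1,\dots,u_n\}$ for $K$ with the frame of outward normals of a volume-minimal box $B\supset\PiK$, together with the classical Loomis--Whitney inequality \eqref{LoomisWhitneyineq} applied to the box $B(K;F)$ circumscribed about $K$ with edges parallel to $F$. First I would fix a frame $F$ that is optimal for the functional $\Phi$, i.e.\ a frame for which $\vol_n(K)/\vol_n(B(K;F))=\Phi(K)$; let $B(K;F)$ have edge lengths $\ell_1,\dots,\ell_n$, where $\ell_i$ is the width of $K$ in direction $u_i$. The key geometric observation is that the projection $K|u_i^\perp$ is contained in the $(n-1)$-dimensional box obtained by intersecting $B(K;F)$ with $u_i^\perp$ (up to translation), whose volume is $\prod_{k\neq i}\ell_k=\vol_n(B(K;F))/\ell_i$. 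Hence
$$
\vol_{n-1}(K|u_i^\perp)\le \frac{\vol_n(B(K;F))}{\ell_i},\qquad i\in[n].
$$

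Taking the product over $i$ gives $\Psi(K;F)\le \vol_n(B(K;F))^n/\prod_i\ell_i=\vol_n(B(K;F))^{n-1}$, since $\prod_i\ell_i=\vol_n(B(K;F))$. Therefore
$$
\Lambda(K;F)=\frac{\vol_n(K)^{n-1}}{\Psi(K;F)}\ge \frac{\vol_n(K)^{n-1}}{\vol_n(B(K;F))^{n-1}}=\Phi(K)^{n-1},
$$
and since $\Lambda(K)=\max_{F'}\Lambda(K;F')\ge\Lambda(K;F)$, raising to the power $1/(n-1)$ yields the claimed inequality $\Lambda(K)^{1/(n-1)}\ge\Phi(K)$. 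Note this argument does not even need Lemma \ref{rem-frame-box}; it only uses that the optimal $\Phi$-frame is \emph{some} frame, so that its $\LW$-ratio is a lower bound for $\Lambda(K)$.

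The only point requiring a little care — and the place I expect to have to argue rather than assert — is the containment $K|u_i^\perp\subset$ (a translate of) $B(K;F)\cap u_i^\perp$. This is elementary: the box $B(K;F)$ can be written as $\{x: c_k\le u_k^Tx\le c_k+\ell_k,\ k\in[n]\}$, and for any $y\in K$ its projection onto $u_i^\perp$ satisfies $c_k\le u_k^T y\le c_k+\ell_k$ for all $k\neq i$, so the projection of $K$ lies in the slab system defining the face $B(K;F)\cap\{u_i^Tx=c_i\}$, which is exactly an $(n-1)$-dimensional box of volume $\vol_n(B(K;F))/\ell_i$. With that observation in hand the rest is the one-line product computation above, so the lemma follows immediately.
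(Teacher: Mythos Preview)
Your argument is correct and is essentially identical to the paper's own proof: both fix a frame $F$ optimal for $\Phi$, use the containment $K|u_i^\perp\subset B(K;F)|u_i^\perp$ to bound each projection, and then use the box identity $\prod_{i=1}^n\vol_{n-1}(B(K;F)|u_i^\perp)=\vol_n(B(K;F))^{n-1}$ to conclude. Your opening remarks about Lemma~\ref{rem-frame-box} and the Loomis--Whitney inequality are, as you yourself observe, not needed and can be dropped.
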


\begin{proof}
Let $F=\{u_1,\ldots,u_n\}$ be a frame such that $\Phi(K)=\vol_n(K)/\vol_n(B(K;F))$. Then
$$
\begin{aligned}
\Lambda(K)^{1/{n-1}} &\geq  \frac{\vol_n(K)}{\prod_{i=1}^n\vol_{n-1}(K|u_i^\perp)^\frac1{n-1}}
\geq \frac{\vol_n(K)}{\prod_{i=1}^n\vol_{n-1}(B(K;F)|u_i^\perp)^\frac1{n-1}}\\
& =
\frac{\vol_n(K)}{\vol_n(B(K;F))}= \Phi(K),
\end{aligned}
$$
which concludes the proof.
\end{proof}

A lower estimate of $\Phi(K)$ is provided by the following lemma.

\begin{lem}\label{lowerboundPhi}
For every $K\in\CK^n$,
$$
   \Phi(K)\geq 1/n!.
$$
\end{lem}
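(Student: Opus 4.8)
The plan is to prove, by induction on the dimension $n$, the equivalent assertion that for every $K\in\CK^n$ there is a frame $F=\{u_1,\dots,u_n\}$ with
$$
\vol_n\bigl(B(K;F)\bigr)\le n!\,\vol_n(K);
$$
since $\Phi(K)\ge\vol_n(K)/\vol_n(B(K;F))$ for any particular frame, this yields $\Phi(K)\ge 1/n!$. The base case $n=1$ is trivial, as the minimal box then equals $K$, and $n=2$ follows from Theorem~\ref{Planarcase} together with the identity $\Lambda=\Phi$ on $\CK^2$.

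For the inductive step I would let $u_1\in\S^{n-1}$ be the direction of a diameter of $K$. Since the diameter segment is parallel to $u_1$, the width of $K$ in the direction $u_1$ equals $\diam K$, and that segment is a longest chord of $K$ parallel to $u_1$. Applying the inductive hypothesis to the $(n-1)$-dimensional body $K|u_1^\perp$ provides an orthonormal basis $\{u_2,\dots,u_n\}$ of $u_1^\perp$ and a rectangular box $B'\subset u_1^\perp$ with $K|u_1^\perp\subseteq B'$ and $\vol_{n-1}(B')\le(n-1)!\,\vol_{n-1}(K|u_1^\perp)$. Let $B$ be the Cartesian product of $B'$ with a segment of length $\diam K$ in direction $u_1$, positioned to cover the range of the $u_1$-coordinate on $K$. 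Then $B\supseteq K$, the edges of $B$ are parallel to the frame $F=\{u_1,\dots,u_n\}$, and $\vol_n(B)=\diam K\cdot\vol_{n-1}(B')$, so
$$
\vol_n\bigl(B(K;F)\bigr)\le\vol_n(B)\le(n-1)!\,\diam K\cdot\vol_{n-1}(K|u_1^\perp).
$$

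It thus remains to show $\diam K\cdot\vol_{n-1}(K|u_1^\perp)\le n\,\vol_n(K)$, which closes the induction. This is exactly the inequality $l_K(u_1)\,\vol_{n-1}(K|u_1^\perp)\le n\,\vol_n(K)$, where $l_K(u_1)$ is the length of a longest chord of $K$ in direction $u_1$ — the Rogers--Shephard consequence already used in the proof of Theorem~\ref{thm-simplex} (an equality for simplices) — applied to the diameter direction, for which $l_K(u_1)=\diam K$. The only genuine obstacle is this last inequality; I would simply cite \cite{RS} for it, exactly as in the proof of \eqref{SnQinequality}, rather than reprove it. Everything else is routine bookkeeping, and the simplex case shows that the bound $1/n!$ is best possible.
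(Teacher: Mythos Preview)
Your proof is correct and follows essentially the same argument as the paper: induction on $n$, choosing the diameter direction $u_1$, applying the inductive hypothesis to $K|u_1^\perp$, and closing the induction via the Rogers--Shephard consequence $n\vol_n(K)\ge(\diam K)\vol_{n-1}(K|u_1^\perp)$. The paper's version is organized identically, with the box written as the Minkowski sum of the diameter segment and the lower-dimensional box.
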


\begin{proof}  The proof proceeds by induction on $n$.  
For $n=2$, the inequality is a consequence of the fact that $\Lambda(2)=1/2$. 

Let the inequality be true now for every body in $\CK^{n-1}$,
let $K$ be a convex body in $\R^n$, let $v\in \S^{n-1}$ be parallel
to a diameter of $K$, and let $D\subset K$ be a segement of length $\diam K$ parallel to $v$. 
Further, let $\hat B$ be a rectangular box in $v^\perp$ containing
the orthogonal projection of $K$ onto $v^\perp$ such that
$\vol_{n-1}(\hat B)\leq (n-1)!\vol_{n-1}(K|v^\perp)$.
We define the rectangular box $B$ as the Minkowski sum of $\hat B$
and $D$. Then $B$ contains $K$ and
$$
 \vol_n(K)\geq\frac{1}{n}\,(\diam K)\vol_{n-1}(K|v^\perp)
 \geq \frac1n\, (\diam K)\frac{\vol_{n-1}(\hat B)}{(n-1)!}=\frac{\vol_n(B)}{n!}.
$$
\end{proof}

Replacing the volume $\vol_n$ by an {\em intrinsic volume} $V_i$ (see, e.g., \cite[p.~213]{Schneider})
we obtain the functionals
$$
     \Phi_i(K)=\max_{F\in \CF^n}\frac{V_i(K)}{V_i(B(K;F))}\ , \qquad i\in [n].
$$
Recall that $\vol_n=V_n$, hence $\Phi_n=\Phi$. Also, up to a constant, $V_1$ is the {\em mean width}, and $S=2V_{n-1}$.  
Let us point out that,
rather dealing with extrema of $V_i(B(K;F))$ as a function of $F$, Chakerian \cite{Chakerian} and 
Schneider \cite{Schneider72} considered the mean value of such a function.

For $\Phi_1$ we can give additional results.
We begin with the analogue of Proposition \ref{FreemanShapira} and Theorem \ref{GeneralizedO'Rourke}.

\begin{thm}\label{FreemanShapira1}
If $P$ is a planar convex polygon, then all rectangles of minimal perimeter containing $P$
have at least one edge parallel to an edge of $P$.
\end{thm}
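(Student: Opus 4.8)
The plan is to express the perimeter of a circumscribed rectangle as a function of a single rotation angle, to show this function is piecewise a \emph{positive} sinusoid — hence piecewise strictly concave — with breakpoints exactly at the edge-normal directions of $P$, and to conclude that its minimum is attained only at a breakpoint, i.e. at an orientation in which a side of the rectangle runs parallel to a side of $P$.

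First I would set $u_\theta=(\cos\theta,\sin\theta)$ and let $w(\theta)=h_P(u_\theta)+h_P(-u_\theta)$ be the width of $P$ in direction $u_\theta$. For each $\theta$ let $B_\theta$ be the rectangle circumscribed about $P$, tight on all four sides, with one pair of sides orthogonal to $u_\theta$; its side lengths are $w(\theta)$ and $w(\theta+\pi/2)$, so
$$
p(\theta)=2\bigl(w(\theta)+w(\theta+\tfrac{\pi}{2})\bigr)
$$
is its perimeter, a continuous $\pi$-periodic function, and a rectangle of minimal perimeter containing $P$ must be $B_\theta$ for some $\theta$ minimizing $p$ (a circumscribed rectangle not tight on some side can be shrunk, strictly reducing its perimeter). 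One side of $B_\theta$ lies in the line $\{x:u_\theta^Tx=h_P(u_\theta)\}$ and meets $P$ in the support set $\{x\in P:u_\theta^Tx=h_P(u_\theta)\}$, which is an edge of $P$ precisely when $u_\theta$ is an outer normal of an edge; arguing likewise for $-u_\theta$ and $\pm u_{\theta+\pi/2}$, one finds that $B_\theta$ has a side parallel to a side of $P$ exactly when $\theta$ lies in the finite set $N=M\cup(M+\tfrac{\pi}{2})\subset\R/\pi\Z$, where $M\subset\R/\pi\Z$ collects the directions of the outer edge-normals of $P$.

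The key point is then the local structure of $w$. Since $h_P(u_\theta)=v(\theta)^Tu_\theta$ with $v(\theta)$ the vertex of $P$ extreme in direction $u_\theta$, and $v(\theta)$ (likewise the vertex extreme in direction $-u_\theta$) is locally constant off $M$, on every open arc of $\R/\pi\Z\setminus M$ one has $w(\theta)=d^Tu_\theta$ for a fixed vector $d$, a sinusoid of angular frequency $1$. Hence on every open arc $I$ of $\R/\pi\Z\setminus N$ both $w(\theta)$ and $w(\theta+\pi/2)$ are such sinusoids, so $p|_I$ is again a sinusoid of angular frequency $1$, say $p|_I(\theta)=2A\cos(\theta-\varphi)$, and it is strictly positive because $w>0$. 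A positive sinusoid $f=A\cos(\theta-\varphi)$ satisfies $f''=-f<0$, so $p|_I$ is strictly concave; writing $\ell$ for the affine function agreeing with $p$ at the two endpoints of $I$, strict concavity of $p-\ell$ together with $(p-\ell)|_{\partial I}=0$ yields $p>\ell\ge\min_{\partial I}p$ on the interior of $I$.

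Finally, $N$ partitions $\R/\pi\Z$ into finitely many closed arcs and, by the last inequality, $p$ restricted to each of them attains its minimum only at an endpoint; hence $p$ attains its global minimum and every minimizer lies in $N$. By the second paragraph, for each minimizer $\theta^*$ the rectangle $B_{\theta^*}$ has a side parallel to a side of $P$, which proves the claim for all minimal-perimeter circumscribed rectangles. The only slightly delicate points are the bookkeeping of the breakpoints of $p$ — they come both from those of $\theta\mapsto w(\theta)$ and from those of $\theta\mapsto w(\theta+\pi/2)$ — and the reliance on \emph{strict} concavity, which is exactly what upgrades ``some minimizer lies in $N$'' to ``every minimizer lies in $N$''; note also that the perimeter case is cleaner than the area case of Proposition \ref{FreemanShapira} precisely because $w(\theta)+w(\theta+\pi/2)$ is piecewise a single sinusoid whereas $w(\theta)\,w(\theta+\pi/2)$ is not.
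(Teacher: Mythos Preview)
Your proof is correct and follows essentially the same approach as the paper's: parametrize by the rotation angle, observe that the perimeter function is piecewise sinusoidal with $f''=-f<0$ on each smooth arc, and conclude that no interior minimum can occur. The paper phrases the width via the support function of the difference body $DP=P+(-P)$, i.e.\ $w(\theta)=h_{DP}(u_\theta)$, but this is exactly your $h_P(u_\theta)+h_P(-u_\theta)$, so the arguments coincide; your version is a bit more explicit in deriving that \emph{every} minimizer lies at a breakpoint from strict concavity.
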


\begin{proof} Let $u(\theta)=(\cos\theta, \sin\theta)$. Without loss of generality, suppose that the origin lies in the interior of $P$ and consider the frames $F\in \CF^ 2$ of the form $F(\theta)=\{u(\theta), u(\theta+\pi/2)\}$, with $\theta \in [0,\pi)$.  Note that the perimeter of $B(P;F(\theta))$ is given by the function
$$
    \varphi(\theta)=2h_{\DP}(u(\theta))+2h_{\DP}(u(\theta+\pi/2)),
$$
where $\DP =P+(-P)$ is the {\em difference body} of $P$.
We want to show that the minimum of $\varphi$ is attained at some $\theta$ so that
$u(\theta)$ or $u(\theta+\pi/2)$ is parallel to some edge of $P$.
Let $p_1, p_2, \dots, p_s$ be the vertices of $\DP$. Then, of course, for $u\in \S^1$,
$$
         h_{\DP}(u)=\max_{i\in [s]}u^Tp_i.
$$
Therefore, $\varphi$ is differentiable at $\theta$ and $\theta+\pi/2$ unless there
is an edge of $\DP$ whose outward unit normal is $u(\theta)$ or $u(\theta+\pi/2)$.
Further, whenever $h_{\DP}$ is differentiable (as a function of $\theta$) it is locally of the
form $(e_1^Tp_i)\cos\theta+(e_2^Tp_i)\sin\theta$, for some $i\in [s]$, and therefore smooth.
Let $\Theta$ denote the set of all $\theta\in [0,\pi)$ for which $\varphi$ is smooth.  For $\theta \in \Theta$, $h''_{\DP}(\theta)=-h_{\DP}(\theta)$. Thus, $\varphi''(\theta)$ is also negative, and therefore $\varphi$ does not have a local minimum for $\theta \in \Theta$.  Hence any minimum of $\varphi$ corresponds to a frame for which at least one direction is parallel to an edge of $P$.
\end{proof}

In higher dimensions, we can argue by induction as in the proof of Theorem \ref{GeneralizedO'Rourke} to obtain the following result.

\begin{thm}\label{GeneralizedO'Rourke1}
If $P$ is a convex polytope in $\R^n$, then a rectangular box containing $P$ with minimal
mean width has at most two opposite facets which contain just one point of $P$.
\end{thm}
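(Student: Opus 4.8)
The plan is to adapt the proof of Theorem \ref{GeneralizedO'Rourke} almost verbatim, with two substitutions: the volume is replaced by the mean width, and Proposition \ref{FreemanShapira} is replaced by Theorem \ref{FreemanShapira1}. The only genuinely new ingredient is a description of how the mean width of a rectangular box behaves under the rotation argument, and here the situation is actually cleaner than for volume: being (up to a normalizing factor) an average of the support function over $\S^{n-1}$, the mean width $b$ is linear in the support function and hence additive under Minkowski addition. Consequently, if $B$ is a rectangular box with mutually orthogonal edge directions $u_1,\dots,u_n$ and edge lengths $\ell_1,\dots,\ell_n$, then $B$ is a translate of $\sum_{i=1}^n[0,\ell_iu_i]$, so $b(B)=c_n\sum_{i=1}^n\ell_i$ where $c_n=b([0,e_1])$ depends only on $n$. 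When $B=B(P;F)$ is the smallest box with edges parallel to the frame $F=\{u_1,\dots,u_n\}$ containing $P$, each $\ell_i$ equals the width $w(P;u_i)=h_P(u_i)+h_P(-u_i)$ of $P$ in direction $u_i$.

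With this in hand I would argue by contradiction exactly as in Theorem \ref{GeneralizedO'Rourke}. Suppose a minimal mean width box $B$ had two pairs of opposite facets, with outward normals $\pm v_1$ and $\pm v_2$, such that each of these four facets meets $P$ in a single point. Keeping $v_3,\dots,v_n$ fixed and rotating the frame in the plane $V=\lin\{v_1,v_2\}$ about the axis $\lin\{v_3,\dots,v_n\}$, the widths $\ell_3,\dots,\ell_n$ do not change, so by the formula above minimizing $b(B(P;F))$ over this one-parameter family amounts to minimizing $\ell_1+\ell_2=w(P;u_1)+w(P;u_2)$. Since $w(P;u)=w(P|V;u)$ for $u\in V$, this is, up to the factor $2$, the perimeter of the rectangle in $V$ with edges parallel to $u_1,u_2$ circumscribed about the polygon $P|V$. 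As $B$ is globally minimal, $B|V$ must therefore be a rectangle of minimal perimeter containing $P|V$, and Theorem \ref{FreemanShapira1} forces one of its sides to be parallel to an edge $e$ of $P|V$. Of the two sides of $B|V$ parallel to $e$, the one whose outward normal agrees with that of $e$ contains $e$ (for a convex polygon the face exposed by the outward normal of an edge is that whole edge), and this side is the projection onto $V$ of one of the facets of $B$ with normal among $\pm v_1,\pm v_2$; lifting the two endpoints of $e$ to points of $P$ shows that facet contains two distinct points of $P$, contradicting the hypothesis. Hence at most one pair of opposite facets of $B$ can meet $P$ in a single point each, which is the assertion.

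I expect the proof to be routine once the first paragraph is carried out; the step that really requires thought — the analogue of the main obstacle — is precisely that reduction to the planar perimeter problem, i.e. recognizing that the Minkowski additivity of mean width plays for $\Phi_1$ exactly the role that the product structure of the volume over the coordinate directions played for $\Phi$ in Theorem \ref{GeneralizedO'Rourke}. All remaining points, in particular the bookkeeping that a side of $B|V$ parallel to an edge of $P|V$ genuinely contains that edge and that the corresponding facet of $B$ then picks up the two required points of $P$, are carried out exactly as in the volume case and present no new difficulty.
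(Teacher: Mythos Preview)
Your proposal is correct and follows exactly the approach the paper intends: it says only that one argues ``as in the proof of Theorem \ref{GeneralizedO'Rourke}'', and you have carried out precisely that argument, replacing volume by mean width and invoking Theorem \ref{FreemanShapira1} in place of Proposition \ref{FreemanShapira}. Your observation that the additivity of $V_1$ reduces the rotation step to minimizing the perimeter of the circumscribed rectangle about $P|V$ is exactly the point, and the remaining bookkeeping matches the volume case.
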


A functional which can be associated to $\Phi_1$ in a natural way is $$
    \Lambda_1(K)=\max_{F\in \CF^n}\frac{V_1(K)}{\sum_{i=1}^n V_1(K|u_i^\perp)}.
$$

By \cite{CampiGro}, for every $n$-dimensional convex body $K$,
$$
\Lambda_1(K)\leq 1/(n-1)
$$
with equality if and only if $K$ is a rectangular box, and by \cite{CampiGarGro},
$$
\Lambda_1(K)\geq \Lambda_1(C_n).
$$

The following lemma is analogous to Lemma \ref{LambdaPhi}.

\begin{lem}\label{F_1>G_1}
For every $K\in \CK^n$ we have
$$
\Phi_1(K)\leq (n-1)\,\Lambda_1(K).
$$
In the case of equality there exist a coordinate box $B$ and a rotate $K^g$ of $K$
such that
$$K^g |e_i^\perp = B|e_i^\perp \ , \  i\in [n].$$
\end{lem}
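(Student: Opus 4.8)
The plan is to mimic the proof of Lemma~\ref{LambdaPhi}, but with the arithmetic–geometric mean inequality replacing the role of the AM–GM step there. The key observation is that $V_1$ is Minkowski-linear on convex bodies in the following sense: for a convex body $L$ and a segment $D$, $V_1(L+D) = V_1(L) + V_1(D)$ (up to the normalizing constant, $V_1$ is the mean width, which is additive under Minkowski sums). Fix a frame $F=\{u_1,\ldots,u_n\}$ that is optimal for $\Phi_1$, so that $\Phi_1(K)=V_1(K)/V_1(B(K;F))$. Write $B=B(K;F)$ as the Minkowski sum of $n$ segments $D_i$ parallel to $u_i$; then $B|u_i^\perp$ is the Minkowski sum of the $n-1$ segments $D_j$, $j\neq i$, so that $V_1(B|u_i^\perp)=\sum_{j\neq i} V_1(D_j)$. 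Summing over $i$ gives $\sum_{i=1}^n V_1(B|u_i^\perp) = (n-1)\sum_{i=1}^n V_1(D_i) = (n-1)\,V_1(B)$.

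First I would use monotonicity of $V_1$ under inclusion together with $K\subset B$ and $K|u_i^\perp \subset B|u_i^\perp$ to write
$$
\Lambda_1(K) \ge \frac{V_1(K)}{\sum_{i=1}^n V_1(K|u_i^\perp)} \ge \frac{V_1(K)}{\sum_{i=1}^n V_1(B|u_i^\perp)} = \frac{V_1(K)}{(n-1)\,V_1(B)} = \frac{\Phi_1(K)}{n-1},
$$
which is exactly the claimed inequality $\Phi_1(K)\le (n-1)\Lambda_1(K)$. Here the first inequality holds because the left side is a maximum over all frames while the middle term uses the specific frame $F$; the second inequality is monotonicity of $V_1$ applied projection-by-projection; and the final identity is the Minkowski-additivity computation from the previous paragraph.

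For the equality characterization, equality forces both inequalities above to be equalities. Equality in the monotonicity step $V_1(K|u_i^\perp)=V_1(B|u_i^\perp)$ for each $i$, combined with $K|u_i^\perp\subset B|u_i^\perp$, must be upgraded to $K|u_i^\perp = B|u_i^\perp$; this uses that $V_1$ (the mean width) is strictly monotone under strict inclusion of convex bodies, so equal mean widths together with containment give equality of the bodies. Moreover, equality in the first inequality means that the frame $F$ realizing $\Phi_1(K)$ also realizes $\Lambda_1(K)$. After a rotation $g$ taking $F$ to the standard basis $\{e_1,\ldots,e_n\}$, the box $B^g=B(K;F)^g$ is a coordinate box, and $K^g|e_i^\perp = B^g|e_i^\perp$ for all $i\in[n]$, which is the assertion.

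**Main obstacle.** The delicate point is the strict-monotonicity argument in the equality case: one needs that $V_1(M)=V_1(M')$ and $M\subset M'$ for convex bodies force $M=M'$. This is standard for the mean width (it follows, e.g., from the fact that the mean width is an integral of support-function values over the sphere, and the support function of $M$ is dominated pointwise by that of $M'$, so equality of the integrals forces pointwise equality of support functions). I would state this as a short observation rather than belabor it. The rest is the routine Minkowski-additivity bookkeeping for $V_1$ on boxes, which I would carry out once in the main inequality and then reuse verbatim in the equality discussion.
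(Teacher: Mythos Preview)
Your proof is correct and essentially identical to the paper's: both use that $\sum_{i} V_1(B|u_i^\perp)=(n-1)V_1(B)$ for a box $B$ together with monotonicity $V_1(K|u_i^\perp)\le V_1(B|u_i^\perp)$, the only cosmetic difference being that the paper rotates $K$ first so that the optimal frame becomes the standard basis. Your treatment of the equality case is in fact more explicit than the paper's, which simply notes that it follows from the displayed chain of inequalities.
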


\begin{proof}
Let $g$ be a rotation such that $K^g$ admits a coordinate box $B$ as a minimal mean width rectangular box. Then
$$
\Phi_1(K)= \frac{V_1(K^g)}{V_1(B)} = \frac{(n-1)V_1(K)}{\sum_{i=1}^nV_{1}(B|e_i^\perp)} \leq \frac{(n-1)V_1(K)}{\sum_{i=1}^nV_{1}(K^g|e_i^\perp)} \leq (n-1)\Lambda_1(K).
$$
The assertion for the equality case follows from the above chain of inequalities.
\end{proof}

The upper bound of Lemma \ref{F_1>G_1} should be contrasted with the inequality
$$
   \Phi_1(K)\geq \frac{2\kappa_{n-1}}{\kappa_{n}}
$$
of \cite{Chakerian}, where $\kappa_n=\vol_n(\B^n)$,
which holds with equality if and only if $V_1(B(K;F))$ is independent of $F$.

\vspace{1cm}

{\bf Acknowledgement:} The authors are grateful to the referees for their valuable
comments on a previous version of the present paper.

\vspace{1cm}

\bibliographystyle{plain}
\bibliography{references19}

\end{document}